\theoremstyle{plain}
\newtheorem{theorem}{Theorem}[section]
\newtheorem{definition}[theorem]{Definition}
\newtheorem{lemma}[theorem]{Lemma}
\newtheorem{prop}[theorem]{Proposition}
\newtheorem{cor}[theorem]{Corollary}
\newtheorem{ex}[theorem]{Example}
\renewcommand{\b}{\begin{equation}}
\newcommand{\e}{\end{equation}}
\newcommand\C{{\mathbb C}}
\newcommand\R{{\mathbb R}}
\newcommand{\N}{\nabla}
\newcommand{\F}{\mathcal{F}}
\begin{document}
\title[Second order geometric flows on foliated manifolds]{Second order geometric flows on foliated manifolds}
\date{\today}
\author[L.~Bedulli]{Lucio Bedulli}
\address{Dipartimento di Ingegneria e Scienze dell'Informazione e Matematica \\ Universit\`a dell'Aquila\\
via Vetoio\\ 67100 L'Aquila\\ Italy}
\email[L.~Bedulli]{bedulli@math.unifi.it}

\author[W.~He]{Weiyong He}
\address{Department of Mathematics, University of Oregon, Eugene, Oregon, 97403}
\email[W.~He]{whe@uoregon.edu}

\author[L. ~Vezzoni]{Luigi Vezzoni}
\address{Dipartimento di Matematica \\ Universit\`a di Torino\\
Via Carlo Alberto 10\\
10123 Torino\\ Italy} 
\email[L.~Vezzoni]{luigi.vezzoni@unito.it}

\subjclass{53C44, 53C12, 53C25}
\thanks{This work was supported by the project FIRB ``Geometria differenziale e teoria geometrica delle funzioni'',
 the project PRIN
\lq\lq  Variet\`a reali e complesse: geometria, topologia e analisi armonica" and by G.N.S.A.G.A. of I.N.d.A.M}
\begin{abstract}
We prove a general result about the short time existence and uniqueness of second order geometric flows transverse to a Riemannian foliation on a compact manifold. Our result includes some flows already existing in literature, as the {\em transverse Ricci flow}, the {\em Sasaki-Ricci flow} and the {\em Sasaki $J$-flow} and motivates the study of other evolution equations. We also introduce 
a transverse version of the {\em K\"ahler-Ricci flow} adapting some classical results to the foliated case. 
\end{abstract}

\maketitle

\section{Introduction}
In this paper we study transversally parabolic flows on manifolds foliated by Riemannian foliations. The definition of  {\em Riemannian foliation} was introduced 
and firstly studied by B. Reinhart in \cite{Reinhart} as  a natural generalization of
Riemannian submersions. Roughly speaking a foliation $\F$ on a manifold $M$ is Riemannian if there  exists a Riemannian metric on $M$ such that the distance from one leaf of $\F$ to another is 
 locally constant. The normal bundle $Q$ to a Riemannian foliation $\F$ inherits a metric  $g_Q$ along the fibres which is \lq\lq constant'' along the leavesÓ of $\mathcal F$. Furthermore, $g_Q$ induces a canonical connection $\nabla$ on $Q$ preserving $g_Q$ and having vanishing transverse torsion. This connection can be used to define the transverse curvature and the transverse Ricci tensor of $g_Q$. 

Searching for a preferred transverse metric on a manifold foliated by a Riemannian foliation, it is quite natural to follow the nonfoliated case studying the flow of a transverse metric along the transverse Ricci tensor.
This was initiated in \cite{Min-Oo} in the context of Cartan geometry where it is introduced the {\em transverse Ricci flow} and it is proved a foliated version of the famous Hamilton's results for 3-dimensional compact manifolds with positive Ricci tensor (see \cite{positive}). Furthermore, the {\em transverse Ricci flow} was used in \cite{SWZ}  to evolve  Sasakian metrics and then investigated in \cite{collins0,collins1,collins2,collins3,he,wang}. A similar flow for evolving Riemannian metrics on manifolds foliated by $1$-codimensional non-Riemannian foliations was introduced and studied in \cite{rovenski,rovenski2}. 

For the flows mentioned above, the short-time existence is proved by using an argument ad hoc. For instance, in \cite{Min-Oo} the short-time existence of the transverse Ricci flow is obtained regarding the flow as a flow of Cartan connections and then applying the original technique of Hamilton for parabolic systems satisfying integrability conditions, whilst in \cite{SWZ} the short  time existence of the Sasaki Ricci flow is obtained  by modifying the flow with a \lq\lq parabolic complement\rq \rq.  

The main goal of this paper is to show that a second order quasilinear transversally parabolic flow of basic sections of a vector bundle over a foliated manifold has always a unique short-time solution. This result implies the short time existence of the transverse Ricci flow and of the Sasaki-Ricci flow and motivates the study of other flows. 
The precise framework is the following: we consider a compact manifold $M$ foliated by a {\em transversally orientable} Riemannian foliation $\F$, an $\mathcal F$-bundle $(E,\nabla)$ over $M$ and a second order quasilinear basic partial differential operator 
$$
D\colon C^{\infty}(E/\mathcal F)\to  C^{\infty}(E/\mathcal F)\,.
$$
By an \lq\lq{\em $\mathcal F$-bundle}\rq\rq\,  we mean a vector bundle $\pi\colon E\to M$ with an assigned connection $\nabla$ whose curvature vanishes along vector fields tangent to the the foliations. Furthermore, $C^{\infty}(E/\mathcal F)$ denotes the set of smooth sections $u$ of $E$  satisfying $\N_Xu=0$ for every vector field $X$ tangent to $\mathcal F$.  Roughly speaking, $D$ is a {\em basic partial differential operator} if locally with respect to a foliated atlas it  writes as a partial differential operators in the transverse coordinates  (see definition \ref{defD}).  
In this set-up, we consider the evolution equation
\begin{equation}\label{parabolic}
\partial_t u_t=D(u_t)\,,\quad 
u_{|t=0}=u_0
\end{equation}
where $u_0\in C^{\infty}(E/\F)$ is fixed and the solution $u\colon M\times [0,\epsilon)\to M$ is required to be smooth and such that $u_t\in C^{\infty}(E/\mathcal F)$ for every $t$.  
\begin{theorem}\label{existence}
Assume that $D$ is strongly transversally elliptic at $u_0$. Then equation \eqref{parabolic} has always a unique maximal solution defined for $t\in [0,\epsilon)$. Moreover, when $D$ is linear $u$ is defined for  
$t\in [0,\infty)$. 
\end{theorem}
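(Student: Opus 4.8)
The plan is to turn the transversally parabolic problem \eqref{parabolic}, which is degenerate in the directions tangent to $\mathcal F$, into a genuinely parabolic Cauchy problem on the compact manifold $M$ by adding a \emph{parabolic complement} -- a second order operator that is elliptic along the leaves but vanishes on basic sections -- and then to prove that this modified flow preserves $C^{\infty}(E/\mathcal F)$, so that its solution actually solves \eqref{parabolic}. Concretely, fix a bundle-like metric $g$ for $\mathcal F$, so that $TM=T\mathcal F\oplus Q$ is an orthogonal splitting, and let $\Delta^{\mathcal F}$ be the leafwise Bochner Laplacian acting on sections of $E$ through $\nabla$ and the leafwise Levi-Civita connection. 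If $u\in C^{\infty}(E/\mathcal F)$ then $\nabla_Xu=0$ for every $X$ tangent to $\mathcal F$, whence $\Delta^{\mathcal F}u=0$; thus the operator $\widetilde D:=D+\Delta^{\mathcal F}$ agrees with $D$ on basic sections. Because $g$ splits orthogonally and $D$ is basic, the principal symbol of the linearization of $\widetilde D$ at $u_0$, evaluated on $\xi=\xi^{\top}+\xi^{\perp}\in T^*M$, is the sum of the transverse symbol of $D$ at $\xi^{\perp}$ -- negative definite for $\xi^{\perp}\neq0$ by strong transversal ellipticity at $u_0$ -- and $-|\xi^{\top}|^2\,\mathrm{Id}$; hence it is negative definite for every $\xi\neq0$, i.e. $\widetilde D$ is strongly elliptic at $u_0$.

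Next I would invoke the standard theory of quasilinear second order parabolic systems on a compact manifold: since $\widetilde D$ is quasilinear and strongly elliptic at $u_0$, the Cauchy problem $\partial_tu=\widetilde D(u)$, $u_{|t=0}=u_0$, has a unique solution $u$ on $M\times[0,\epsilon)$ for some $\epsilon>0$, smooth for $t>0$ and, by parabolic regularity, smooth up to $t=0$. (This can be obtained by linearizing at $u_0$, using maximal regularity / an analytic semigroup, and a contraction argument in parabolic Hölder or Sobolev spaces, or via Hamilton's inverse function theorem.) When $D$, hence $\widetilde D$, is linear this is classical linear parabolic theory and the solution is global, $t\in[0,\infty)$.

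The heart of the argument is to show that the modified flow preserves basicness. Let $w:=\nabla^{\mathcal F}u$ be the leafwise covariant derivative of the solution, a time-dependent section of $E\otimes(T\mathcal F)^*$. Differentiating $\partial_tu=\widetilde D(u)$ and commuting $\nabla^{\mathcal F}$ past $\widetilde D$, and using that the curvature of $\nabla$ vanishes along $\mathcal F$, that $\mathcal F$ is Riemannian (so that $g_Q$, the transverse Levi-Civita connection and the coefficients of $D$ are holonomy invariant) and that $\mathcal F$ is transversally orientable, one obtains a \emph{linear} evolution equation $\partial_tw=\mathcal L_t w$ whose principal part is strongly elliptic of the same type as $\widetilde D$, with lower order coefficients depending on $u$ and its derivatives but with no source term. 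Since $u_0$ is basic, $w_{|t=0}=0$, so uniqueness for linear parabolic equations forces $w\equiv0$ on $[0,\epsilon)$. Hence $u_t\in C^{\infty}(E/\mathcal F)$ for every $t$, so $\Delta^{\mathcal F}u_t=0$ and $u$ solves \eqref{parabolic}. Conversely, any solution of \eqref{parabolic} is required to be basic and therefore also solves the modified flow, so uniqueness transfers back to \eqref{parabolic}; passing to the union of all intervals of existence gives the maximal solution, and in the linear case $\epsilon=\infty$.

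I expect the main obstacle to be precisely this third step: writing out the commutator of $\nabla^{\mathcal F}$ with the quasilinear operator $\widetilde D$ and verifying that it yields a genuinely linear parabolic equation for $w$ with no source term. This is exactly where the \emph{Riemannian} nature of $\mathcal F$ (as opposed to an arbitrary foliation) and the $\mathcal F$-bundle hypothesis on $(E,\nabla)$ enter, and where one must check carefully that the holonomy-invariance of $g_Q$ and of the coefficients of $D$ kills the potential obstruction terms.
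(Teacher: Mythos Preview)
Your approach is genuinely different from the paper's. The paper does not work on $M$ at all: it transfers the problem to Molino's \emph{basic manifold} $W$. One pulls $E$ back to the orthonormal transverse frame bundle $\rho\colon M^\sharp\to M$, where the lifted foliation $\F^\sharp$ is transversally parallelizable and its leaf closures are the fibres of a locally trivial fibration $M^\sharp\to W$; basic sections of $E$ then correspond to ${\rm SO}(n)$-invariant sections of an honest bundle $\bar E\to W$. A quasilinear extension of El Kacimi's theorem (Theorem~\ref{ourEKA}) lifts $D$ to a $G$-invariant operator on $\bar E$ which is made genuinely strongly elliptic by adding a Casimir-type operator $Q=(-1)^{r/2}(\sum Q_j^2)^{r/2}$ built from the ${\rm SO}(n)$-action; $Q$ is elliptic in the vertical directions and annihilates $G$-invariant sections. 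One then solves the resulting parabolic problem on the compact manifold $W$ by the classical theory and checks, via the one-line Lemma~\ref{lemmaginv}, that the solution stays $G$-invariant. Your leafwise Bochner Laplacian $\Delta^{\mathcal F}$ on $M$ plays exactly the role that $Q$ plays on $M^\sharp$, and your basicness-preservation step is the analogue of Lemma~\ref{lemmaginv}. What your route buys is that it avoids the Molino/El Kacimi machinery entirely and stays on $M$; what the paper's route buys is that the invariance step becomes trivial (differentiating along a compact group action) and, crucially, it never needs to extend $D$ off $C^\infty(E/\mathcal F)$. In your scheme that extension is a genuine prerequisite --- $\widetilde D$ must be a well-defined operator on all of $C^\infty(E)$ before standard parabolic theory applies --- and you should say explicitly how you do it (e.g.\ using the bundle-like splitting to define transverse jets of arbitrary sections and then composing with the foliated morphism $T\colon J^r(E/\F)\to E$). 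The commutator computation you flag as the main obstacle does go through, using exactly the $\F$-bundle hypothesis ($\iota_XR^\nabla=0$) and holonomy invariance, and is in the spirit of the parabolic-complement argument of \cite{SWZ} that the paper's introduction cites as motivation for seeking the more structural proof given here.
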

The proof of theorem \ref{existence} is mainly based on the treatment in \cite{EKA} of basic differential operators. 
Indeed, from \cite{molino,EKA} it follows that if $(E,\nabla)$ is an $\F$-bundle over a manifold $M$ foliated by a transversally oriented Riemannian foliation $\F$, then there exist a compact smooth manifold $W$ and an ${\rm SO}(n)$-bundle $\bar E$ over $W$ such that 
$C^{\infty}(E/\F)$ is canonically isomorphic to the space of ${\rm SO}(n)$-invariant sections of $\bar E$.   
Moreover, from \cite{EKA} it follows that if  $D\colon C^{\infty}(E/\mathcal F)\to  C^{\infty}(E/\mathcal F)$ is a {\em linear} {\em basic} strongly transversally elliptic operator, then it can be regarded as a $G$-invariant strongly elliptic differential  operator on $C^{\infty}(\bar E)$. From this result it follows that in the linear case equation  \eqref{parabolic} can be regarded as a genuine parabolic equation involving sections of a fiber bundle and the existence and uniqueness of a solution follows from the standard parabolic theory. The proof of the nonlinear case follows the same approach, but since the results in \cite{EKA} are proved only for linear operators, we have to adapt El Kacimi's theorem to the nonlinear case (see theorem \ref{ourEKA}). 

In the second part of the paper we apply theorem \ref{existence} to some explicit flows on foliated manifolds. In section $5$ we consider the {\em transverse Ricci flow} introduced in \cite{Min-Oo} and we prove that it is  well-posed by applying theorem \ref{existence}. Indeed, as it happens in the non foliated case, the flow is not strongly parabolic and it has to be modified by using a basic vector field. The modified transverse Ricci flow is strongly transversally parabolic and it is well-posed in view of 
theorem \ref{existence}. The existence of a solution to the transverse Ricci flow follows from the well-posedness of its modification, while for the uniqueness of the solutions we adapt an argument in \cite{Kotschwar} to the foliated case. In this second part we have to assume that the foliation is {\em homologically orientable} in order to introduce an integral functional.  

In section \ref{kahler1} we take into account K\"ahler foliations studying a foliated version of the K\"ahler-Ricci flow.  Here the well-posedness of the flow is again implied by theorem \ref{existence}, while the long time existence is obtained adapting some well-known results of the non-foliated case. 


\medskip
\noindent {\em{Acknowledgements}}. The authors would like to thank professor Min-Oo for useful comments and remarks. 
\section{Preliminaries on Riemannian foliations}
Let $M$ be an $(m+n)-$dimensional smooth manifold. A codimension $n$ foliation $\F$ on $M$ can be defined
as an open cover $\{U_i\}$ of $M$ together a family $f_i\colon U_i\to T$ of submersions onto an $m$-dimensional manifold $T$, called the  {\em base} of the foliation, such that whenever $U_i\cap U_j \neq \emptyset$ there exists a smooth map $\gamma_{ij}\colon f_i( U_i\cap U_j)\to f_j(U_i\cap U_j)$ satisfying $f_j \circ \gamma_{ij}=f_i$. The pair $(M,\F)$ is usually called a {\em foliated manifold}. 
The previous construction is equivalent to assign an {\em involutive} distribution $L$ of rank $m$. A {\em leaf} of $\mathcal F$ is by definition a maximal integral submanifold of $L$. Given a foliated manifold $(M,\F)$, we denote by $Q$ the normal bundle $TM/L$ and it is therefore defined the following exact sequence of vector bundles 
$$
0 \rightarrow L \rightarrow TM\rightarrow Q\to 0\,.
$$     

 A {\em transverse structure} on a foliated manifold is by definition a geometric structure on the base manifold $T$ which is invariant by the transition maps $\{\gamma_{ij}\}$. The most important class of transverse structures is the one of {\em transverse Riemannian metrics} introduced by Reinhart in \cite{Reinhart} as a natural generalisation of Riemannian submersions.  In contrast to the non-foliated case, the existence of a transverse Riemannian metric is not always guaranteed and imposes some strong conditions on the foliation. A foliation $\F$ is Riemannian if and only if $Q$ inherits a metric $g_Q$ along its fibres satisfying the {\em holonomy invariant} condition 
\b\label{holinv}
\mathcal{L}_Xg_{Q}=0
\e 
for every $X\in C^{\infty}(L)$, where $\mathcal{L}$ is the Lie derivative (see e.g. \cite{T,molino}). Condition
\eqref{holinv} makes sense since if $X$ is a section of $L$, then its flow $\{\phi_t\}$ preserves the foliation. 
The metric $g_Q$ is simply defined by gluing together the pull-backs of the metric of the base $T$ via the local submersions and can be regarded as a degenerate symmetric $2$-tensor on $M$. Such a $g_Q$ can be always \lq\lq completed" to a bundle-like Riemannian metric $g$ on $M$, i.e. there always exists a metric $g$ on $M$ whose restriction to the orthogonal complement of $L$ is $g_Q$. A metric $g$ on a foliated manifold is usually called {\em bundle-like} if its restriction to $L$ satisfies \eqref{holinv}. 
In this paper we refer to a metric $g_Q$ on $Q$ satisfying \eqref{holinv} as to a {\em transverse metric}. 

A vector field $X$ on a foliated manifold $(M,\F)$ is called {\em foliated} if $[X,Y]\in C^{\infty}(L)$ for every $Y\in C^{\infty}(L)$. Denoting by $C^{\infty} (M,\F)$ the space of foliated vector fields on $(M,\F)$, the quotient  $C^{\infty}(M/\F):=C^{\infty}(M,\F)/C^{\infty}(L)$ is by definition the set of {\em basic vector fields} on $(M,\F)$. For every $X\in C^{\infty}(M,\F)$, $X^b$ denotes the correspondent class in $C^{\infty}(M/\F)$ and can be regarded as a section of $Q$. A foliation is called {\em transversally parallelizable} if there exist $n$ basic vector fields which are linear independent at any point.   


Now we recall the definition of {\em $\F$-fibration.} Let $\pi\colon P\to M$ be a $G$-principal bundle over a foliated manifold $(M,\F)$ and let $H \subseteq TP$ be the horizontal distribution defined by a connection on $P$ with connection $1$-form $\omega$. Then for every $p\in P$, $H_p$ is isomorphic to $T_{\pi(p)}M$ and consequently $\F$ induces a distribution $\tilde \F$ in $P$. The connection given by $H$ is called a {\em basic} if $\tilde \F$ is a foliation and $\omega$ is basic, i.e. $\mathcal L_{\tilde X} \omega=0$ for every vector field $\tilde X$ tangent to $\tilde{\F}$. In this case the pair $(P,H)$ is called an {\em $\F$-fibration}.  A vector bundle $E$ with an assigned affine connection $\nabla$ is called an $\F$-{\em fibration} if the induced principal bundle $(P,H)$ is an $\F$-fibration. This is equivalent 
to require that the curvature $R$ of $\nabla$ satisfies $\iota_XR=0$ for every smooth section $X$ of $L$. Moreover, if $(E,\nabla)$ is an $\F$-bundle, then the foliation $\tilde \F$ on the principal bundle induces a foliation $\F_E$ on $E$.  A map $T$ between two $\F$-bundle $(E,\nabla)$ and $(E',\nabla')$ on $M$ is called {\em foliated} if it takes leaves of $\F_E$ to leaves of $\F_{E'}$ and a smooth section $\alpha$ of an $\F$-bundle $(E,\nabla)$ is called {\em basic} if $\N_Xu=0$ for every $X$ tangent to $\F$. We denote by $C^{\infty}(E/\F)$ the set of smooth basic sections of $(E,\nabla)$.

The most natural example of $\F$-fibration is the ${\rm SO}(n)$-bundle  of transversally oriented frames of a Riemannian foliation defined as follows. 
Let $\F$ be a Riemannian foliation on a smooth manifold $M$ with transverse metric $g_Q$. Then it is defined the {\em transverse Levi-Civita} connection $\N$ on $Q$ as 
\begin{equation}\label{TLC}
\nabla_{X}V=
\begin{cases}
\begin{array}{ccl}
[X,\sigma(V)]_{Q}& \mbox{if}& X\in \Gamma(L)\\
(\N^g_{X}\sigma(V))_{Q}& \mbox{if} &X\in \sigma(Q),
\end{array}
\end{cases}
\end{equation}
for every $V\in C^{\infty}(Q)$, where $g$ is a bundle-like metric on $M$ inducing $g_Q$ with Levi-Civita connection $\nabla^g$, $\sigma$ is the isomorphism between $Q$ and $L^{\perp}$ and $X\mapsto X_Q$ is the projection onto $Q$. Such a $\nabla$ does not depend on the choice of $g$ and it is the unique connection on $Q$ satisfying 
\begin{eqnarray}
&&\label{preservingg} Xg_Q(V_1,V_2)=g_Q(\nabla_X V_1,V_2)+g_Q(V_1,\nabla_XV_2)\\
&&\label{free} \nabla_X Y_Q - \nabla_Y X_Q -[X,Y]_Q = 0\,, 
\end{eqnarray}  
for every vector fields $X,Y$ on $M$ and $V_1$ and $V_2$ in  $C^{\infty}(Q)$. Moreover, $\F$ is called {\em transversally orientable} if there exists a nowhere vanishing transverse volume form $\nu$. When such a $\nu$ is fixed, then the principal bundle of linear frames of $Q$ has a natural ${\rm SO}(n)$-reduction which we denote as in \cite{molino} by $\rho\colon M^{\sharp}\to M$. The transverse Levi-Civita connection induces a connection on $M^{\sharp}$ making it an $\F$-bundle. The following result is due to Molino and it will be important subsequently:

\begin{theorem}[Molino, \cite{molino}]\label{molinoth}
The foliation $\F^{\sharp}$ induced by $\F$ on $M^{\sharp}$ is always transversally parallelizable and invariant by the action of ${\rm SO}(n)$. Moreover, the leaf closures of $\F^{\sharp}$ are the fibres of a locally trivial fibration $F\hookrightarrow M^{\sharp}\to W$, where $W$ is a compact manifold called the {\em basic manifold} of $\F$. 
\end{theorem}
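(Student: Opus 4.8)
The plan is to prove the two assertions separately. The first --- that $\F^{\sharp}$ is transversally parallelizable and ${\rm SO}(n)$-invariant --- is a direct computation on the frame bundle $M^{\sharp}$ using its canonical differential forms; the second --- that on a compact manifold the leaf closures of a transversally parallelizable foliation are the fibres of a locally trivial fibration over a compact base --- is the structure theorem for such foliations, and this is where the real work lies.

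For the first part, recall that $\F^{\sharp}$ is by construction the foliation on $M^{\sharp}$ whose tangent distribution is the horizontal lift of $L$ for the connection induced on $M^{\sharp}$ by the transverse Levi-Civita connection $\N$; since $(M^{\sharp},\N)$ is an $\F$-bundle this lift is involutive, and the normal bundle of $\F^{\sharp}$ splits as the direct sum of the horizontal lift of $L^{\perp}\cong Q$ and the vertical bundle $V\cong M^{\sharp}\times\mathfrak{so}(n)$, so $\F^{\sharp}$ has codimension $q=n+n(n-1)/2$. I would then bring in the two canonical forms on $M^{\sharp}$: the tautological $\R^{n}$-valued $1$-form $\theta$, defined by $\theta_{u}(X)=u^{-1}\bigl((\rho_{*}X)_{Q}\bigr)$, and the $\mathfrak{so}(n)$-valued connection $1$-form $\omega$ of $\N$. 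The pair $(\theta,\omega)$ vanishes on $T\F^{\sharp}$ and restricts to a linear isomorphism on the normal bundle of $\F^{\sharp}$. The key point is that every component of $\theta$ and of $\omega$ is a basic $1$-form for $\F^{\sharp}$: one has $\iota_{X}\theta=\iota_{X}\omega=0$ for $X$ tangent to $\F^{\sharp}$ by horizontality, and $\mathcal L_{X}\theta=\iota_{X}d\theta$, $\mathcal L_{X}\omega=\iota_{X}d\omega$ vanish because $\N$ has vanishing transverse torsion and, being the connection of an $\F$-bundle, has curvature annihilated by $L$ (so $\iota_{X}\Theta=\iota_{X}\Omega=0$ for $X$ along $\F^{\sharp}$). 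Choosing a complement $\mathcal H$ of $T\F^{\sharp}$ in $TM^{\sharp}$ and letting $X_{1},\dots,X_{q}$ be the sections of $\mathcal H$ dual to these $q$ basic $1$-forms, the standard Cartan-formula argument for vector fields dual to a set of basic forms spanning the conormal bundle shows that each $X_{i}$ is foliated; hence their classes in $C^{\infty}(M^{\sharp}/\F^{\sharp})$ are $q$ everywhere-independent basic vector fields, and $\F^{\sharp}$ is transversally parallelizable. The ${\rm SO}(n)$-invariance is then immediate, since the horizontal distribution of a principal connection is invariant under the structure group.

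For the second part I would invoke the structure theorem for \emph{complete} transversally parallelizable foliations, whose hypotheses hold because $M^{\sharp}$ is compact (so the transverse parallelism is automatically complete). The steps are: one shows that the restriction of $\F^{\sharp}$ to the closure $\bar F$ of any leaf is a Lie foliation with dense leaves whose structural Lie algebra is independent of the leaf; one deduces that all leaf closures have the same dimension; and finally one shows that the partition of $M^{\sharp}$ into leaf closures is a simple foliation, i.e.\ there is a submersion $\rho'\colon M^{\sharp}\to W$ whose fibres are exactly the leaf closures, with $F\hookrightarrow M^{\sharp}\to W$ locally trivial and $W$ compact (the latter two by compactness and properness). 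I expect this second part to be the main obstacle: transverse parallelizability is a formal computation on the frame bundle, but establishing that the leaf closures are equidimensional and assemble into an honest fibration requires the analysis of the closure of the holonomy pseudogroup --- equivalently, of the commuting sheaf of $\F^{\sharp}$ --- or, alternatively, the use of Fédida's description of Lie foliations with dense leaves to produce the submersion $M^{\sharp}\to W$ on local models and patch these together; this is precisely the content of Molino's theory and I would cite \cite{molino} for the details.
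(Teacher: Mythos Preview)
The paper does not prove this statement at all: Theorem \ref{molinoth} is stated as a result of Molino with a citation to \cite{molino}, and the paper uses it as a black box in Section~4 (to obtain the basic manifold $W$ and the isomorphisms $\psi$, $\psi^{\sharp}$ via El Kacimi's machinery). So there is no ``paper's own proof'' to compare against.

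That said, your sketch is a correct outline of the standard argument as presented in \cite{molino}. The first part is exactly the classical computation: the tautological $\R^n$-valued form $\theta$ together with the connection form $\omega$ furnish a basic coframe for $\F^{\sharp}$, and the torsion-freeness of $\nabla$ along the transverse directions together with $\iota_X R=0$ for $X\in C^\infty(L)$ are precisely what make $\mathcal L_X\theta$ and $\mathcal L_X\omega$ vanish along $\F^{\sharp}$. The second part --- equidimensionality of leaf closures and the existence of the basic fibration --- is, as you say, the content of Molino's structure theorem for complete transversally parallelizable foliations, and citing \cite{molino} for those details is exactly what the paper does. One minor point: you might want to be explicit that $M^{\sharp}$ is compact because $M$ is compact and ${\rm SO}(n)$ is compact, since this is what guarantees completeness of the transverse parallelism and compactness of $W$.
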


In the last part of this section we recall the definition of the {\em basic Laplace operator} and {\em basic cohomology} groups. 
Let $(M,\F)$ be a manifold foliated by a Riemannian foliation and let $g_Q$ be its transverse metric. A $p$-form $\alpha$ on $M$ is called {\em basic} if 
$$
\iota_{X}\alpha=0\,,\quad \mathcal{L}_X\alpha=0
$$ 
for every smooth section $X$ of the fibre bundle $L$ generated by $\F$, where $\mathcal{L}$ denotes the Lie derivative.  We denote by $\Omega^p_B(M)$ the set of basic $p$-form on $M$ and by $C^{\infty}_B(M)$ the set of basic fuctions (i.e. basic $0$-forms). Notice that accordingly to our previous notation we have $C^{\infty}_B(M)=C^{\infty}(M\times \R/\mathcal F)$. 
 Then the de Rham differential operator takes  basic forms into basic forms and the pair $(\Omega_B,d)$ induces a cohomology $H_B$ usually called the {\em basic cohomology} of $(M,\F)$.  As is usual we will denote by $d_B$ the restriction $d_{|\Omega_B}$. When $\F$ is transversally oriented the basic hodge \lq\lq star\rq\rq operator $*_B$ and the {\em basic codifferential operator}  $\delta_B$ are defined in the usual way. Furthermore, it is defined the {\em basic Laplacian operator}  $\Delta_B=d_B\delta_B+\delta_B d_B$ acting on basic forms of degree at least 1. On the other hand for conventional reasons we put a minus sign in the definition of the basic Laplacian acting on basic functions $\Delta_B=-d_B\delta_B+\delta_B d_B: C^{\infty}_B(M) \to C^{\infty}_B(M)\,.$ 
As in the classical Hodge theorem, in the compact case the basic cohomology groups are isomorphic to the kernels of $\Delta_B$, but, in contrast to  the nonfoliated case, they do not always satisfy Poincar\'e duality.  Poincar\'e duality is guaranteed under some strong topological assumption on $\F$, for instance when $\F$ is homological orientable: 
 
\begin{definition}\label{homorientable}
A  transversally oriented Riemannian foliation $\F$ is called {\em homologically orientable} if
there exists an $m$-form $\chi$ on $M$ restricting to a volume along the leaves of $\F$ and such that 
$$
\iota_X d\chi=0
$$
for every $X\in C^{\infty}(L)$.  
\end{definition}
It is well-known that when $\F$ is homologically orientable, the form $\chi$ can alwaysbe  written as  
$$
\chi(Y_1,\dots,Y_m)=\det\left(g(Y_i,E_j)\right)\,,\quad Y_1,\dots ,Y_m\in \Gamma(TM)
$$
where $g$ is a bundle-like metric on $M$ making the leaves of $\F$ minimal and 
 $\{E_1,\dots,E_m\}$ is an oriented orthonormal frame of $L$. Furthermore, the existence of $\chi$ allows us to introduce the following scalar product on basic forms  
\b\label{scalar}
(\alpha,\beta)=\int_M \alpha\wedge *_B\beta\wedge \chi\,.
\e
which makes $\Delta_B$ self-adjoint.  
%
%

\section{Basic differential operators on foliated manifolds}\label{basicdiffoperfolaitedmanifolds}
In order to introduce {\em basic differential operators} on foliated manifolds, we briefly recall the non-foliated case. Let $M$ be a manifold and let $\pi\colon E\to M$ be a vector bundle over $M$. We denote by $C^{\infty}(E)$ the vector space of smooth global sections of $E$. A {\em quasilinear differential operator} of order $r$ is a map $D\colon C^{\infty}(E)\to C^{\infty}(E)$ which can be locally written as 
$$
D(u)=\left[a^{i_1\dots i_r}_{\alpha\beta}(x,u,\nabla u,\dots,\nabla^{r-1} u)\partial_{x^{i_1}\cdots x^{i_r}}u^{\beta}
+b_{\alpha}(x,u,\nabla u,\dots,\nabla^{r-1} u)\right]\,e_{\alpha}
$$
where  $\{x^{r}\}$ are local coordinates on $M$ and $\{e_{\alpha}\}$ is a local  frame of $E$. In this definition and throughout all the paper we use the Einstein summation convention. When $D$ has even order $r$, it is called {\em strongly elliptic} at $u\in C^{\infty}(E)$ if there exists a constant $\lambda>0$ such that the differential $L_{u}=D_{|*u}$ of
$D$ at $u$ satisfies
\b\label{se}
(-1)^{r/2}h(\sigma(L_{u})(x,\xi)v,v)\geq \lambda |\xi|^2 |v|^2
\e
for every $(x,\xi)\in T^*M$, $\xi\neq 0$, and $v\in E_x$, where $h$ is an arbitrary metric along the fibres of $E$. 
Here
$\sigma(L_{u})$ denotes the principal symbol of $L_{u}$  which,
for every $(x,\xi)\in T^*M$, is the endomorphism of $E_x$ defined by 
$$
\sigma(L_{u_0})(x,\xi)v=\frac{i^r}{r!}L_{u_0}(f^{r}u)(x)
$$
for an $ f\in C^{\infty}(M)$ such that $f(x)=0$, $f_{|*x}=\xi$, $u\in C^{\infty}(E)$, $u(x)=v$. 
More generally, if $\tau$ is a subbundle of  $T^*M$, $D$ is called {\em strongly $\tau$-elliptic} if  $\sigma(L_{u_0})(x,\xi)$ satisfies \eqref{se} for every $(x,\xi)\in \tau$.  We recall the following classical result 
(see e.g. \cite[Chapter 4]{aubin})
\begin{theorem}\label{classical existence}
Let $D\colon C^{\infty}(E)\to C^{\infty}(E)$ be a second order quasilinear operator which is strongly elliptic at $u_0$, then the evolution equation 
$$
\partial_t u_t=D(u_t)\,\quad u_{|t=0}=u_0
$$
has a unique maximal solution 
$u\in C^{\infty}(M\times [0,\epsilon))$ for some $\epsilon>0$. Moreover, when $D$ is linear, $u$ is in $C^{\infty}(M\times [0,\infty))$.
\end{theorem}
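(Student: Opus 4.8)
The plan is to run the standard linearization and fixed-point argument for quasilinear parabolic systems. Set $u_t=u_0+v_t$ and fix once and for all a metric $h$ along the fibres of $E$ and a connection on $E$; then \eqref{parabolic} (with $r=2$) becomes a quasilinear parabolic Cauchy problem $\partial_t v=\wt D(v)$, $v_{|t=0}=0$, whose linearization at $v=0$ is $L_{u_0}$, which satisfies the strong ellipticity inequality \eqref{se} by hypothesis. I would work in the parabolic H\"older spaces $C^{2+\alpha,1+\alpha/2}(M\times[0,T],E)$ for some $\alpha\in(0,1)$, the natural spaces in which the second spatial derivatives and the first time derivative of a solution are H\"older continuous; since $M$ is compact these are defined patchwise from the Euclidean ones through a finite atlas.

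The core input is the linear theory. If $A^{ij}=A^{ji}$ are endomorphism-valued coefficients in $C^{\alpha,\alpha/2}$ satisfying \eqref{se} uniformly, the linear problem
\[
\partial_t w=A^{ij}\partial_{x^ix^j}w+(\text{lower order})+f\,,\qquad w_{|t=0}=w_0\,,
\]
is uniquely solvable, with the parabolic Schauder estimate
\[
\|w\|_{C^{2+\alpha,1+\alpha/2}}\le C\big(\|f\|_{C^{\alpha,\alpha/2}}+\|w_0\|_{C^{2+\alpha}}\big)\,,
\]
where $C$ depends only on $T$, the ellipticity constant and the H\"older norms of the coefficients. This is classical (\cite[Chapter 4]{aubin}) and on $M$ follows from the Euclidean case by a partition of unity, or from the fact that $L_{u_0}$ generates an analytic semigroup on $L^2(M,E)$.

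Using this I would define the freezing map $\Phi$: for $\bar v$ in the ball $B=\{\bar v\in C^{2+\alpha,1+\alpha/2}(M\times[0,T],E):\bar v_{|t=0}=0,\ \|\bar v\|\le 1\}$, let $\Phi(\bar v)=v$ solve the linear problem obtained from $\wt D$ by evaluating its coefficients at $\bar v$, i.e. $A^{ij}=a^{ij}(x,u_0+\bar v,\nabla(u_0+\bar v))$ and the remaining terms at $\bar v$, with $v_{|t=0}=0$. Since $a^{ij}$ and $b_\alpha$ are smooth in their arguments, on $B$ these coefficients have $C^{\alpha,\alpha/2}$-norms bounded uniformly in $\bar v$ and in $T\le 1$, and they satisfy \eqref{se} for $T$ small (being $C^0$-close to those of $L_{u_0}$). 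The Schauder estimate then bounds $\|\Phi(\bar v)\|$ by a constant times the $C^{\alpha,\alpha/2}$-norm of the zero-order part of $\wt D$ evaluated at $\bar v$; since $v_{|t=0}=0$, interpolation forces $\|v\|_{C^{2+\alpha,1+\alpha/2}(M\times[0,T])}\to 0$ as $T\to 0$, so $\Phi(B)\subseteq B$ for $T$ small. A parallel estimate, using that $a^{ij}$ and $b_\alpha$ are locally Lipschitz in $(u,\nabla u)$, shows that $\Phi$ is a contraction on $B$ (in the weaker norm $C^{\alpha,\alpha/2}$, or in the strong norm after further shrinking $T$), and Banach's fixed point theorem produces the unique short-time $v$, hence $u=u_0+v$.

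It then remains to upgrade regularity, pass to a maximal interval, and treat the linear case. Smoothness of $u$ on $M\times(0,\epsilon)$ is the usual parabolic bootstrap: once $u\in C^{2+\alpha,1+\alpha/2}$ the coefficients $a^{ij}(x,u,\nabla u)$ lie in $C^{1+\alpha,(1+\alpha)/2}$, so Schauder gives $u\in C^{3+\alpha,(3+\alpha)/2}$, and iterating -- differentiating the equation in $t$ to control the time direction -- gives $u\in C^\infty$, up to $t=0$ because $u_0$ is smooth. The existence of a maximal solution on some $[0,\epsilon)$ and its uniqueness follow from a standard open--closed continuation argument together with a Gronwall estimate for the difference of two solutions. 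When $D$ is linear the top-order coefficients no longer depend on $u$, so the linear Schauder estimate holds on every $[0,T]$ with constants growing only exponentially in $T$; equivalently $D$ generates an analytic semigroup and $u(t)=e^{tD}u_0$ is smooth and defined for all $t\ge 0$. I expect the genuinely delicate step to be the contraction estimate: one must choose the pair of norms -- a strong norm for the invariant ball and a weaker one for the Lipschitz bound -- so that self-mapping and contractivity hold on one common small time interval, which is precisely where the quasilinear dependence on $\nabla u$ and the compactness of $M$ (needed for uniform constants) enter, and which is the content of the cited references.
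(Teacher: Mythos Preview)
Your sketch is the standard linearization/freezing plus Schauder fixed-point argument for quasilinear parabolic systems, and it is correct as outlined. Note, however, that the paper does not actually prove this statement: it is recalled as a classical result with a reference to \cite[Chapter 4]{aubin}, so there is no ``paper's own proof'' to compare against --- your outline is precisely the kind of argument underlying that citation.
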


Consider now a Lie group $G$ together a representation of $G$ in ${\rm Aut}(E).$ In this case we have also an induced $G$-action on $M$ and $E$ is usually called a {\em $G$-bundle} and $C^{\infty}(E)$ inherits the natural $G$-action $(g\cdot \alpha)(x):=g\cdot \alpha(g^{-1}x)$. We denote, adopting the notation of \cite{EKA}, by $C^{\infty}_G(E)$ the space of $G$-invariant sections of $E$. A section  $u$ of $E$ belongs to $C^{\infty}_G(E)$ if and only if $\mathcal L_Xu=0$ for every fundamental vector field $X$ of the action of $G$, where $\mathcal L$ denotes the Lie derivative. Moreover, a partial differential operator $D\colon C^{\infty}(E)\to C^{\infty}(E)$ is called {\em $G$-invariant} if it commutes with $\mathcal L_X$ for every fundamental vector field $X$. The following lemma will be useful
\begin{lemma}\label{lemmaginv}
Let $E\to  M$ be a $G$-bundle over a compact manifold, $D\colon C^{\infty}(E) \to C^{\infty}(E)$ a quasilinear second order strongly elliptic differential operator and 
 $\bar u_0$ be a $G$-invariant section of $\bar E$. Then the solution to the parabolic system 
\begin{equation}\label{parabolic2}
\partial_t u_t=D(u_t)\,,\quad u_{|t=0}=\bar u_0
\end{equation}
stays $G$-invariant for every $t$.
\end{lemma}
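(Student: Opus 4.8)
The plan is to exploit uniqueness of the parabolic flow \eqref{parabolic2} together with the equivariance of the operator $D$. First I would observe that since $E\to M$ is a $G$-bundle and $D$ is $G$-invariant, for any $g\in G$ the operator $D$ commutes with the action of $g$ on sections, i.e. $D(g\cdot u)=g\cdot D(u)$ for every $u\in C^{\infty}(E)$; this follows by integrating the infinitesimal condition that $D$ commutes with $\mathcal L_X$ for every fundamental vector field $X$, using that $G$ is connected on the identity component (and treating other components by the same token applied to the generating diffeomorphisms). Next I would fix $g\in G$ and let $u_t$ be the unique maximal solution to \eqref{parabolic2} guaranteed by Theorem \ref{classical existence}; set $v_t:=g\cdot u_t$. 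Then $\partial_t v_t = g\cdot \partial_t u_t = g\cdot D(u_t) = D(g\cdot u_t)=D(v_t)$, and $v_{|t=0}=g\cdot \bar u_0 = \bar u_0$ because $\bar u_0$ is $G$-invariant. Thus $v_t$ solves the same initial value problem on the same maximal interval.

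By the uniqueness part of Theorem \ref{classical existence}, $v_t=u_t$ for all $t$ in the interval of existence, which says precisely that $g\cdot u_t=u_t$; since $g\in G$ was arbitrary, $u_t\in C^{\infty}_G(E)$ for every $t$. Equivalently, at the infinitesimal level one checks that $w_X:=\mathcal L_X u_t$ satisfies the linearized parabolic equation $\partial_t w_X = L_{u_t}(w_X)$ obtained by differentiating \eqref{parabolic2} and using $[\mathcal L_X,D]=0$, with $w_X|_{t=0}=\mathcal L_X\bar u_0=0$; uniqueness for this linear parabolic system forces $w_X\equiv 0$, giving $G$-invariance directly. I would present the group-action argument as the main line and mention the infinitesimal version as the alternative.

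The only genuinely delicate point is the regularity/technical bookkeeping needed to apply uniqueness: one must know that $g\cdot u_t$ lies in the same function space in which Theorem \ref{classical existence} asserts uniqueness (this is automatic since the $G$-action on $E$ is smooth and acts by bundle automorphisms, so it preserves $C^{\infty}(M\times[0,\epsilon))$ and all the relevant Sobolev/Hölder norms), and that the maximal interval of existence is the same for both $u_t$ and $g\cdot u_t$ (again automatic by the symmetry of the argument applied to $g^{-1}$). Since $M$ is compact and $G$ acts by isometries of any $G$-invariant metric, these points are routine; I would state them briefly rather than dwell on them. No obstruction of substance arises beyond invoking uniqueness correctly.
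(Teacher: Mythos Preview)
Your proposal is correct. The paper's own proof takes what you present as the \emph{alternative}, infinitesimal route: it applies $\mathcal{L}_X$ to the flow equation, uses that $D$ commutes with $\mathcal{L}_X$, and concludes that $\mathcal{L}_X u_t$ solves the same parabolic problem with zero initial data, hence vanishes by uniqueness. Your main line instead works at the group level, acting by a fixed $g\in G$ on the solution and invoking the uniqueness part of Theorem~\ref{classical existence} for the nonlinear problem directly.

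Both arguments are short and ultimately rest on parabolic uniqueness. Your group-level version has the minor advantage that it sidesteps a wrinkle in the paper's write-up: for genuinely nonlinear $D$, the identity the paper writes as $\mathcal{L}_X D(u_t)=D(\mathcal{L}_X u_t)$ should really read $\mathcal{L}_X D(u_t)=L_{u_t}(\mathcal{L}_X u_t)$, so that $\mathcal{L}_X u_t$ satisfies the \emph{linearized} equation rather than the original one---exactly as you state in your alternative. The conclusion is of course unaffected (the linearized equation is still parabolic with zero initial data), but your formulation of the infinitesimal argument is in fact the more careful one.
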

\begin{proof}
Let $X$ be a fundamental vector field for the action of $G$ on $M$. Then taking the Lie derivative of \eqref{parabolic2} and taking into account that $D$ commutes with $\mathcal{L}_X$ we get 
$$
\partial_t(\mathcal{L}_X  u_t)= D(\mathcal{L}_X  u_t)\,,\quad  
(\mathcal{L}_X u_{|t=0})=0\,. 
$$
Hence $\mathcal{L}_X u_t$ is a solution to 
$$
\partial_tv_t=D(v_t)\,,\quad  v_{|t=0}=0\,.
$$
and the claim follows. 
\end{proof}

Now we can focus on the foliated case. We adopt the following  definition: 
\begin{definition}\label{defD}
Let $(M,\F)$ be a foliated manifold and $(E,\nabla)$ an $\F$-fibration. A {\em quasilinear basic differential operator of order $r$} is a map   $D\colon C^{\infty}(E/\F)\to C^{\infty}(E/\F)$ such that with respect to  
local foliated coordinates $\{x^1, \dots , x^n, y^1, \dots, y^m\}$ takes the local expression
$$
D(u)=\left[a^{i_1\dots i_r}_{\alpha\beta}(y,u,\nabla u,\dots,\nabla^{r-1} u)\partial_{y^{i_1}\cdots y^{i_r}}u^{\beta}
+b_{\alpha}(y,u,\nabla u,\dots,\nabla^{r-1} u)\right]\,e_{\alpha}
$$
where $\{e_\alpha\}$ is a local trivialisation of $E$.  
\end{definition} 

When $D$ is linear, definition \ref{defD} agrees to the one given in \cite{EKA}. For a linear 
basic differential operator $D$ of order $r$ and $(x,\xi)\in T_x^*M$,  the principal symbols $\sigma(D)(x,\xi)$  of $D$ at $(x,\xi)$ is defined by*  
$$
\sigma(D)(x,\xi)v=\frac{i^r}{r!}D(f^{r}u)(x)
$$
for $v\in E_x$ and $f\in C^{\infty}(M)$ basic and such that $f(x)=0$, $f_{*|x}=\xi$, $u\in C^{\infty}(E/F)$, $u(x)=v$. In analogy to the nonfoliated case, $D$ is called {\em strongly transversally elliptic} at $u\in C^{\infty}(E/\F)$ if $D$ has even order $r$ and 
there exists a constant $\lambda>0$ such that the differential $L_{u}=D_{|*u}$ of
$D$ at $u$ satisfies
\b
(-1)^{r/2} h(\sigma(L_{u})(x,\xi)v,v)\geq \lambda |\xi|^2 |v|^2
\e
for every $(x,\xi)\in T^*M$, $\xi\neq 0$, and $v\in E_x$, where $h$ is some metric along the fibres of $E$. 

\begin{ex}
{\em The foremost example of  strongly transversally elliptic operator is the basic Laplacian operator $\Delta_B$ acting on basic functions described in the previous section.} 
\end{ex}

In analogy to the non-foliated case, every {\em linear} basic differential operator can be described in terms of jets. We briefly recall this description and refer to \cite{EKA} for details.
Let $r$ be a positive integer and let $J^r(E/\F)$ be the vector bundle whose fiber at a point $x\in M$  is given by 
$$
J^r_x(E/\F)=\frac{C^{\infty}(E/\F)}{Z_x(E/\F)}
$$
$Z_x(E/\F)$ being the ring of basic sections $u$ of $E$ satisfying $\nabla^ku=0$ at $x$ for every $k\leq r$. Let 
$$
S^k(Q, E):=S^k(Q^*)\otimes E\,,
$$
where $S^k$ denotes the $k$-symmetric power.  Then we have the canonical isomorphism 
$$
J^r(E/\F)\simeq  \oplus_{k=0}^r S^k(Q, E)
$$
(see \cite{EKA}, corollary 2.3.7). 
In particular, $J^{r}(E/\F)$ inherits a basic connection $\nabla^J$ since all the bundles involved are indeed $\mathcal F$-bundles.
  For every basic section $u$ of $E$ we denote by $J_r(u)_x$  the corresponding class in $J_r(E/\F)$.  Then it is defined the natural map $J_r \colon C^{\infty}(E/\F)\to C^{\infty}(J^{r}(E/\F))$ as $J_r(f)(x):=J_r(f)_x$. Every linear basic partial differential operator $D\colon C^{\infty}(E/\F)\to C^{\infty}(E/\F)$ of order $r$ can be written as $D=T'\circ J_r$, where 
$T'\colon C^{\infty}(J^r(E))\to C^{\infty}(E)$ is the map induced by the
{\em foliated} morphism $T\colon J^{r}(E/\F)\to E$.

%
%

\section{Proof of theorem \ref{parabolic}}
This section contains the proof of theorem  \ref{existence} and it is subdivided in two parts. The first part is about the linear case and it is obtained as direct consequence of a theorem of El Kacimi proved in \cite{EKA} (see theorem \ref{ekatheorem} below). For the nonlinear case, we generalise El Kacimi's theorem to quasilinear operators and then we get the proof of theorem \ref{parabolic} as a consequence. 

\medskip 
Let us consider a compact manifold $M$ equipped with an $n$-codimensional transversally oriented Riemannian foliation $\F$ and let $(E,\nabla)$ be an $\F$-bundle over $M$. Let $G={\rm SO}(n)$ and $\rho\colon M^{\sharp}\to M$ be the $G$-principal bundle of orthonormal oriented frames of $(M,\F)$ and let $\F^{\sharp}$ be the induced transversally parallelizable Riemannian foliation on $M^{\sharp}$. Denote, accordingly to Molino's theorem \ref{molinoth}, by  $W$ the basic manifold of $\F$ and by 
$F\hookrightarrow M^{\sharp}\to W$ the locally trivial fibration induced by the the leaf closures of $\F^{\sharp}$. 
Denote by $E^{\sharp}\to M^{\sharp}$ the pull-back of $E$ via $\rho$. 
In view of \cite{EKA} there always exist a $G$-bundle 
$\bar{E}\to \bar W$ and canonic isomorphisms 
$$
\psi^\sharp\colon C^{\infty}(E^{\sharp}/\F^{\sharp})\to C^{\infty}(\bar E)\,,\quad \psi\colon C^{\infty}(E/\F)\to C^{\infty}_G(\bar E)\,.
$$
The following result is proved in \cite{EKA}

\begin{theorem}[El Kacimi]\label{ekatheorem}
Let $D\colon C^{\infty}(E/\F)\to C^{\infty}(E/\F)$ be a linear strongly transversally elliptic basic differential operator.
Then there exists a $G$-invariant  strongly elliptic differential operator $\bar D\colon C^{\infty}(\bar E)\to C^{\infty}(\bar E)$ making the following diagram commutative  
\b\label{commutative}
\begin{array}[c]{ccc}
C^{\infty}(E/\F) &\xrightarrow{\quad \bar D\quad }&C^{\infty}(E/\F)\\
\Big\downarrow\scriptstyle{\psi}&&\Big\downarrow\scriptstyle{\psi}\\
C^{\infty}_G(\bar E)&\xrightarrow{\quad  D\quad }&C^{\infty}_G(\bar E)\,.
\end{array}
\e
 
\end{theorem}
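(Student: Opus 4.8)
The plan is to reconstruct El~Kacimi's correspondence concretely and then transport the linear strongly transversally elliptic operator $D$ to an operator on the basic manifold. First I would recall from Molino's theorem~\ref{molinoth} that the lifted foliation $\F^\sharp$ on $M^\sharp$ is transversally parallelizable and that its leaf closures fibre $M^\sharp$ over the compact basic manifold $W$; the crucial structural fact is that a basic object on $(M^\sharp,\F^\sharp)$ — which is automatically constant along the leaves \emph{and} along their closures — descends to a genuine object on $W$. Pulling back $(E,\nabla)$ to $(E^\sharp,\nabla^\sharp)$ over $M^\sharp$, the flatness of $\nabla^\sharp$ along $\tilde\F^\sharp$ together with transverse parallelizability lets one trivialise $E^\sharp$ along leaf closures and thus descend it to a bundle $\bar E\to W$; since $G={\rm SO}(n)$ acts on $M^\sharp$ preserving everything, this action survives on $\bar E$, giving $\psi^\sharp$ and $\psi$ as in the excerpt. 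A basic section $u\in C^\infty(E/\F)$ pulls back to a basic section of $E^\sharp$, hence to a $G$-invariant section of $\bar E$, and this is an isomorphism onto $C^\infty_G(\bar E)$.

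Next I would use the jet description recalled at the end of Section~\ref{basicdiffoperfolaitedmanifolds}: write $D = T'\circ J_r$ with $T\colon J^r(E/\F)\to E$ a foliated morphism and $r=2$. Since $J^r(E/\F)\simeq\bigoplus_{k=0}^r S^k(Q,E)$ is built out of $\F$-bundles, the whole jet apparatus lifts to $M^\sharp$ and, being basic, descends through the fibration $M^\sharp\to W$; the coefficients of $D$, being basic functions/tensors, descend to smooth $G$-invariant coefficients on $W$. This produces a linear second-order differential operator $\bar D\colon C^\infty(\bar E)\to C^\infty(\bar E)$, $G$-invariant by construction, and the commutativity of diagram~\eqref{commutative} is immediate because $\psi,\psi^\sharp$ are defined fibrewise and intertwine differentiation (the connection $\nabla$ corresponds to $\nabla^\sharp$ which corresponds to a connection on $\bar E$). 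The remaining point is ellipticity: I would compute the principal symbol of $\bar D$ at a covector $\bar\xi\in T^*_wW$ by pulling it back along $M^\sharp\to W$ to a basic covector on $M^\sharp$ and then pushing it down along $\rho\colon M^\sharp\to M$ to a covector $\xi$ annihilating $L$, i.e.\ a covector in the conormal bundle of $\F$. One checks $\sigma(\bar D)(w,\bar\xi)$ equals, under the identification $\bar E_w\cong E_x$, the transverse symbol $\sigma(D)(x,\xi)$, which by hypothesis (strong transverse ellipticity of $D$ at $u_0$, hence — since $D$ is linear — everywhere) satisfies the inequality $(-1)^{r/2}h(\sigma(L_u)(x,\xi)v,v)\ge\lambda|\xi|^2|v|^2$. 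The only subtlety is that strong transverse ellipticity gives positivity of the symbol only on \emph{conormal} covectors, not on all of $T^*M^\sharp$; the resolution, exactly as in El~Kacimi, is that the relevant metric comparison takes place on $W$ where $T^*W$ is spanned precisely by the images of conormal covectors, so the estimate does hold for every nonzero $\bar\xi\in T^*_wW$, and $\bar D$ is strongly elliptic.

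I expect the main obstacle to be the careful verification that the naive transverse symbol on $M$ really does reassemble into a genuinely elliptic symbol on the basic manifold $W$ — in other words, controlling the passage through the intermediate space $M^\sharp$ and its leaf-closure fibration. Concretely one must be sure that (i) the descent of $E^\sharp$, its connection, and the jet bundles to $W$ is well-defined and $G$-equivariant, which rests on the transverse parallelizability in Molino's theorem and on the fact that leaf closures of $\F^\sharp$ are compact so there is no holonomy obstruction; and (ii) that the conormal directions of $\F$ on $M$ correspond bijectively, under $\rho$ and the fibration, to \emph{all} cotangent directions on $W$, so that strong transverse ellipticity — a condition only on conormal covectors — upgrades to honest strong ellipticity of $\bar D$. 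Once these identifications are in place, the commutativity of~\eqref{commutative} and the $G$-invariance of $\bar D$ follow formally, and we are done; this is precisely the content of \cite{EKA}, so in the write-up I would cite El~Kacimi for the construction of $\psi,\psi^\sharp,\bar E$ and only spell out the symbol computation in detail.
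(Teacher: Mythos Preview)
Your descent of $E$, the jet bundles, and the operator $D$ through $M^\sharp\to W$ is fine and matches the construction the paper carries out in its proof of the quasilinear extension (Theorem~\ref{ourEKA}); note that the paper does not itself prove Theorem~\ref{ekatheorem} but simply cites \cite{EKA}. However, your treatment of ellipticity contains a genuine gap.

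The claim ``$T^*W$ is spanned precisely by the images of conormal covectors'' (your point (ii)) is false whenever $n\ge 2$. On $M^\sharp$ the transverse bundle splits as $Q^\sharp=H^b\oplus V^b$, where $H^b$ comes from the horizontal lift of $Q$ and $V^b$ from the vertical bundle of $\rho\colon M^\sharp\to M$, i.e.\ from the ${\rm SO}(n)$-orbit directions. By construction your lifted operator $D^\sharp$ has principal symbol supported only in the $H^b$-directions and \emph{vanishes} on $V^b$; this is explicit in the paper's proof of Theorem~\ref{ourEKA}, where $T^\sharp$ is extended as the null map on $S^{i,j}$ with $j\neq 0$. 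When you pass to $W$, the ${\rm SO}(n)$-action descends and contributes cotangent directions on $W$ that do \emph{not} arise from conormal covectors of $\F$ on $M$; indeed $\dim W$ exceeds $n=\operatorname{rk}Q$ by essentially $\dim{\rm SO}(n)$. Along those extra directions the symbol of your $\bar D$ is zero, so $\bar D$ is not strongly elliptic.

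The correct fix, which is exactly what the paper does in proving Theorem~\ref{ourEKA} (following El~Kacimi), is to add a correction term: take a basis $\{X_j\}$ of the Lie algebra of ${\rm SO}(n)$, form $Q_j=\mathcal L_{X_j}$ and set $Q=(-1)^{r/2}\big(\sum_j Q_j\circ Q_j\big)^{r/2}$. This $Q$ is $V^*$-strongly elliptic, is $G$-invariant, and annihilates $G$-invariant sections, so $D':=D^\sharp+Q$ induces the same map on $C^\infty_G(\bar E)$ as $D^\sharp$ does (hence the diagram still commutes) while the induced $\bar D'$ is now strongly elliptic on all of $T^*W$. Your write-up needs this step; without it the operator you produce on $W$ is only transversally elliptic along the image of the ${\rm SO}(n)$-orbits, not genuinely elliptic.
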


\begin{proof}[Proof of theorem \ref{parabolic} in the linear case] 
In the linear case, the statement of theorem \ref{parabolic} easily follows from theorem \ref{ekatheorem} and lemma \ref{lemmaginv}. Indeed, theorem \ref{ekatheorem} implies that if $D$ is linear, then there exists $\bar D\colon C^{\infty}(\bar E)\to C^{\infty}(\bar E)$ as in the statement of theorem \ref{ekatheorem}. 
Let $\bar u_0=\psi (u_0)$. Then theorem \ref{classical existence} implies that the parabolic problem 
$$
\partial_t \bar u(t)=\bar D(\bar u_t)\,,\quad \bar u_{|t=0}=\bar u_0
$$
has a unique solution $\bar u\in C^{\infty}(\bar E\times [0,\infty))$. Since $\bar u_0$ and $\bar D$ are $G$-invariant, then lemma \ref{lemmaginv} ensures that $\bar u_t$ stays $G$-invariant for every $t$.  Hence we can write $\bar u_t=\psi(u_t)$ for some smooth curve $u$ in $C^{\infty}(E/\F)$ solving \eqref{parabolic}. The uniqueness of $u$ follows from the fact that $\psi$ is a isomorphism and the uniqueness of standard parabolic problems.
\end{proof}
The proof of theorem \ref{existence} in the nonlinear case works in the same way, but since El Kacimi's theorem \ref{ekatheorem} is proved in \cite{EKA} only for linear operators, we have to extend it to the quasilinear case. 
\begin{theorem}\label{ourEKA}
Let $D\colon C^{\infty}(E/\F)\to C^{\infty}(E/\F)$ be a quasilinear basic differential operator which is strongly transversally elliptic at $u\in C^{\infty}(E/\F)$.
Then there exists a $G$-invariant  differential operator $\bar D\colon C^{\infty}(\bar E)\to C^{\infty}(\bar E)$ which is strongly elliptic at $\psi(u)$ and makes the following diagram commutative  
\b\label{commutative}
\begin{array}[c]{ccc}
C^{\infty}(E/\F) &\xrightarrow{\quad \bar D\quad }&C^{\infty}(E/\F)\\
\Big\downarrow\scriptstyle{\psi}&&\Big\downarrow\scriptstyle{\psi}\\
C^{\infty}_G(\bar E)&\xrightarrow{\quad  D\quad }&C^{\infty}_G(\bar E)\,.
\end{array}
\e

\end{theorem}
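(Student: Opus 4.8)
The plan is to reduce Theorem \ref{ourEKA} to the linear case by linearizing $D$ and then "reconstructing" a quasilinear operator upstairs. First I would recall the structure available from El Kacimi's work: the isomorphisms $\psi^\sharp$ and $\psi$ identify basic objects on $(M,\F)$ with $G$-invariant objects on $\bar E\to \bar W$, and this identification is compatible with the jet description from Section \ref{basicdiffoperfolaitedmanifolds}, so that a quasilinear basic operator $D$ of order $r$ is encoded by a foliated (hence $G$-invariant after transfer) bundle map $T\colon J^{r-1}(E/\F)\times_M S^r(Q,E)\to E$ that is affine in the top factor. The key point is that this purely algebraic/fibrewise datum transfers verbatim through $\psi$: one obtains a $G$-invariant bundle map $\bar T$ on the corresponding jet bundles over $\bar W$, and we set $\bar D=\bar T'\circ J_r$. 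By construction $\bar D$ is a quasilinear second order operator on $C^{\infty}(\bar E)$, it is $G$-invariant, and the diagram \eqref{commutative} commutes because both vertical arrows intertwine the respective jet maps and $\bar T$ is the transfer of $T$.

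The second step is to check that $\bar D$ is strongly elliptic at $\psi(u)$. Here I would use that strong transverse ellipticity of $D$ at $u$ is a condition on the symbol of the linearization $L_u=D_{|*u}$, evaluated on covectors $\xi$ in $T^*M$; since $L_u$ is a \emph{linear} basic operator, El Kacimi's Theorem \ref{ekatheorem} applies to it and produces a $G$-invariant strongly elliptic linear operator $\overline{L_u}$ on $C^{\infty}(\bar E)$ whose symbol is, fibrewise over the dense open set where $\bar W$ pulls back from $M^\sharp$, identified with the symbol of $L_u$. One then verifies that the linearization of the quasilinear $\bar D$ at $\psi(u)$ coincides with $\overline{L_u}$ — this is immediate from the fact that linearization commutes with the transfer $T\mapsto\bar T$, because linearizing a quasilinear operator only differentiates the coefficient functions, which are transferred pointwise. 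Consequently the symbol estimate \eqref{se} for $\overline{L_u}$ is inherited from the transverse estimate for $L_u$, possibly after replacing the fibre metric $h$ on $E$ by its transferred counterpart $\bar h$ on $\bar E$, which is harmless since strong ellipticity is independent of the choice of fibre metric.

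The main obstacle I anticipate is making precise the sense in which "the coefficient functions of a quasilinear basic operator transfer through $\psi$." For a linear operator this is exactly Theorem \ref{ekatheorem} and is handled in \cite{EKA}; for a quasilinear operator one must argue that the zeroth-order dependence on $(u,\nabla u,\dots,\nabla^{r-1}u)$ — i.e. the map $J^{r-1}(E/\F)\to \mathrm{Hom}(S^r(Q,E),E)$ together with the inhomogeneous term $J^{r-1}(E/\F)\to E$ — is itself a foliated bundle morphism, so that El Kacimi's machinery applies to these \emph{data} rather than to the operator. Concretely this requires verifying that $D$ being basic forces its coefficients $a^{i_1\dots i_r}_{\alpha\beta}$ and $b_\alpha$, written invariantly via the transverse connection $\nabla$, to descend to $M^\sharp$ as $\F^\sharp$-basic functions and then (via Molino's Theorem \ref{molinoth} and \cite{EKA}) to $\bar W$. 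Once this functorial statement is in place, the construction of $\bar D$, the commutativity of \eqref{commutative}, the computation of its linearization, and the strong ellipticity all follow formally, and Theorem \ref{existence} in the nonlinear case then drops out by running the argument of the linear case with Theorem \ref{classical existence} and Lemma \ref{lemmaginv} applied to the quasilinear $\bar D$.
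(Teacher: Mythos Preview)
Your construction of $\bar D$ by transferring the foliated bundle map $T$ through $\psi$ is essentially the first half of the paper's proof, and your observation that linearization commutes with this naive transfer is correct. The gap is in the ellipticity step. The tangent bundle of the basic manifold $W$ does not correspond to $Q$ alone: via Molino's construction it splits (after pulling back to $M^\sharp$) as $H^b\oplus V^b$, where $H^b$ comes from the original transverse directions and $V^b$ comes from the ${\rm SO}(n)$-fibre directions. Your transferred operator $\bar D$ has symbol that is positive only on covectors in $H^{b*}$ and is identically zero on $V^{b*}$, because you built $\bar T$ from data living in $S^r(Q,E)$; so $\bar D$ is \emph{not} strongly elliptic at $\psi(u)$.

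The reason your appeal to Theorem~\ref{ekatheorem} does not close this gap is that El~Kacimi's $\overline{L_u}$ is \emph{not} the naive transfer of $L_u$: his construction adds a $G$-invariant correction term $Q=(-1)^{r/2}\bigl(\sum_j Q_j\circ Q_j\bigr)^{r/2}$, built from the fundamental vector fields of the ${\rm SO}(n)$-action, which is $V^{b*}$-strongly elliptic and vanishes on $G$-invariant sections. Hence the linearization of your $\bar D$ at $\psi(u)$ differs from $\overline{L_u}$ precisely by this term, and your identification of the two is incorrect. The fix, which is what the paper does, is to set $\bar D'=\bar D+Q$: the addition of $Q$ does not affect the commutativity of \eqref{commutative} (since $Q$ annihilates $C^\infty_G(\bar E)$), preserves $G$-invariance, and supplies the missing ellipticity in the vertical directions via \cite[Propositions~2.8.5 and~2.8.6]{EKA}.
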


\begin{proof}
Assume that $D$ has even degree $r$. 
In terms of jets $D$ can be written as $D=T\circ J_r$, where we still denote by $T\colon C^{\infty}(J^r(E/\F))\to C^\infty(E)$ the map induced by the natural morphism 
$$
T\colon J^r(E/\F)\to E
$$
(see the discussion at the end of section \ref{basicdiffoperfolaitedmanifolds}).
It is clear that $D$ is linear if and only if $T_{x}$ is linear for every $x\in M$.
Let $g_Q$ be the transverse metric of $\F$; then the total space $M^{\sharp}$ inherits a transversally oriented foliation $\F^{\sharp}$ which is ${\rm SO}(n)$-invariant.  
The connection $H^{\sharp}$ on $M^{\sharp}$ induced by the transverse Levi-Civita connection of $g_Q$ gives the splitting $TM^{\sharp}=H^{\sharp}\oplus V$, where $V$ is the vertical bundle. Since the induced bundle $L^{\sharp}$ is by definition a subundle of $H^{\sharp}$, then $Q^{\sharp}=TM^{\sharp}/L^{\sharp}$ inherits the splitting 
$$
Q^{\sharp}=H^b\oplus V^b\,,
$$
where $V^b$ is isomorphic to $V$ and $H^b=H^{\sharp}/L^{\sharp}$. Let 
$\pi^{\sharp}\colon E^{\sharp}\to M^{\sharp}$ be the pull-back of $E$ via $\rho$ and $\rho^{\sharp}\colon E^{\sharp}\to E$ the map induced by $\rho$, i.e. 
\begin{equation*}
\begin{array}[c]{ccc}
E^\sharp&\xrightarrow{\quad  \pi^{\sharp}\quad }&M^\sharp\\
\Big\downarrow\scriptstyle{\rho^\sharp}&&\Big\downarrow\scriptstyle{\rho}\\
E&\xrightarrow{\quad  \pi\quad }&M
\end{array}
\end{equation*}
We denote by $S^k(Q,E)^{\sharp}$ the pull-back of $S^k(Q,E)$ to $M^{\sharp}$. Then we easily get    
$$
S^k(Q,E)^{\sharp}\simeq S^k(Q^{\sharp}/V^b,E^{\sharp})\simeq S^k(H^b,E^{\sharp})\,.
$$
Now for every $k$ we can split $S^k(Q^{\sharp},E^{\sharp})$ as 
$$
S^k(Q^{\sharp},E^{\sharp})=\bigoplus_{i+j=k}S^{i,j}
$$
where 
$$
S^{i,j}:=S^i(H^b,E^{\sharp})\otimes S^j(V^b,E^{\sharp})\,.
$$ 
This fact allows as to lift the map $T$ to a map 
$ T^\sharp \colon \bigoplus_{k=0}^r S^k(Q^\sharp,E^{\sharp})\to E^{\sharp}$ making the following diagram commutative 

\begin{equation}\label{lift}
\begin{array}[c]{ccc}
\bigoplus_{k=0}^r S^k(Q^{\sharp},E^\sharp)&\xrightarrow{\quad  T^{\sharp}\quad }&E^\sharp\\
\Big\downarrow\scriptstyle{\rho_s^\sharp}&&\Big\downarrow\scriptstyle{\rho^{\sharp}}\\
\bigoplus_{k=0}^r S^k(Q,E)&\xrightarrow{\quad  T\quad }&E
\end{array}
\end{equation}
where $\rho^{\sharp}_s$ is induced by $\rho^{\sharp}$. The map $T^{\sharp}$ is defined as follows: \\
Let $x^\sharp\in M^\sharp $ and $x=\rho(x^\sharp)$; since 
$$
\rho^{\sharp}\colon E^{\sharp}_{x^\sharp}\to E_x
$$
is an isomorphism, if $\theta\in S^k_{x^\sharp}(H^b,E^{\sharp})$, then we can define  
$$
\tilde T_{x^\sharp}(\theta)=\left(\rho^{\sharp}\right)^{-1}_x(T_z(\rho^\sharp_{s,x}(\theta)))
$$
In this way we have a map $T^{\sharp}$ making the following diagram commutative 
$$
\begin{array}[c]{ccc}
\bigoplus_{k=0}^r S^k(H^b,E^\sharp)&\xrightarrow{\quad  T^{\sharp}\quad }&E^\sharp\\
\Big\downarrow\scriptstyle{\rho^\sharp_s}&&\Big\downarrow\scriptstyle{\rho^\sharp}\\
\bigoplus_{k=0}^r S^k(Q,E)&\xrightarrow{\quad  T\quad }&E
\end{array}
$$
and we extend $T^{\sharp}$ as the null map in $S^{i,j}$ whenever $j\neq 0$.  
Keeping in mind the isomorphism 
$$
\bigoplus_{k=0}^r S^k(Q^{\sharp},E^{\sharp})\simeq J^{r}(E^{\sharp}/\F^{\sharp})
$$ 
we can use the map $T^{\sharp}$ to define the partial differential operator $D^{\sharp}=T^{\sharp}\circ J_r^{\sharp}$ acting on $C^{\infty}(E^{\sharp}/\F^{\sharp})$. Now
$D^{\sharp}$ induces the genuine partial differential operator 
$\bar D\colon C^{\infty}(\bar E)\to C^{\infty}(\bar E)$ by defying $\bar D=\psi^{\sharp}D^{\sharp}(\psi^{\sharp})^{-1}$. By construction $\bar D$ is $G$-invariant and makes diagram \eqref{commutative} commutative.
Assume that $D$ is strongly transversally elliptic at $u$
and fix a complement $H$ of $L^{\sharp}$ in $H^{\sharp}$ in order to have the splitting 
$$
TM^{\sharp}=L^{\sharp}\oplus H\oplus V\,.
$$
We firstly show that $D^{\sharp}$ is $H^*$-strongly transversally elliptic at $u$. By differentiating the following commutative diagram at $u^{\sharp}$
\begin{equation*}
\begin{array}[c]{ccc}
C^{\infty}(J^{r}(E^{\sharp}/\F^{\sharp}))&\xrightarrow{\quad  T^{\sharp}\quad }&C^{\infty}(E^{\sharp})\\
\Big\downarrow\scriptstyle{\rho_s^\sharp}&&\Big\downarrow\scriptstyle{\rho^{\sharp}}\\
C^{\infty}(J^{r}(E/\F))&\xrightarrow{\quad  T\quad }&C^{\infty}(E)
\end{array}
\end{equation*}
we get 
\begin{equation*}
\begin{array}[c]{ccc}
C^{\infty}(J^{r}(E^{\sharp}/\F^{\sharp}))&\xrightarrow{\quad  T^{\sharp}_{*|J_{r}\left(u^{\sharp}\right)}\quad }&C^{\infty}(E^{\sharp})\\
\Big\downarrow\scriptstyle{\rho^\sharp_s}&&\Big\downarrow\scriptstyle{\rho^{\sharp}}\\
C^{\infty}(J^{r}(E /\F))&\xrightarrow{\quad  T_{*|J_{r}(u)}\quad }&C^{\infty}(E)
\end{array}
\end{equation*}
where $u=\rho^{\sharp}(u^{\sharp})$. Since $D_{*|u}$ is strongly transversally elliptic and 
$$
D_{*|u}=T_{*|J_{r}(\alpha)}\circ J_r\,,\quad 
D^{\sharp}_{*|u^{\sharp}}=T^{\sharp}_{*|J_{r}(u^{\sharp})}\circ J_r\,,
$$
then \cite[proposition 2.8.5]{EKA} implies that $D^{\sharp}_{*|\alpha^{\sharp}}$ is  $H^*$-strongly transversally elliptic. Although the induced $D^{\sharp}$ cannot be transversally strongly elliptic, 
we can correct it with some extra terms according to the construction described in \cite{EKA}. Let $\{X_1,\dots, X_N\}$ be a basis of the Lie algebra of SO$(n)$ and let $Q_j$ be the SO$(n)$-invariant differential operator on $\Gamma(E^{\sharp})$ defined by
$$
(Q_{j}\alpha)(x):=\frac{d}{dt}\alpha\left({\exp(tX_j)x}\right)_{|t=0}\,.
$$     
Then we set 
$$
Q:=(-1)^{r/2}\left( \sum_{i=1}^N Q_j \circ Q_j\right)^{r/2}\,.
$$
Clearly $Q$ is $V^*$-strongly elliptic and null in $C^{\infty}_G(E^{\sharp}/\F^{\sharp})$. 
Let $D':=D^{\sharp}+Q$ and $\bar D'\colon C^{\infty}(\bar E)\to C^{\infty}(\bar E)$ be the induced operator.  Since $\bar D_{|*\psi(u)}'$ is the operator induced by 
 $D^{\sharp}_{*|u^{\sharp}}+Q$, \cite[Proposition 2.8.6]{EKA} implies that $\bar D_{|*\psi(u)}'$ is strongly elliptic and the claim follows.  
\end{proof}

\section{The transverse Ricci flow}
In this section we give a  proof of the well-posedness of the of the transverse Ricci flow based on theorem \ref{existence}: the short-time existence is treated in the spirit of \cite{deTurck}, while the uniqueness is obtained with the energy approach of \cite{Kotschwar}.

\medskip
We briefly recall the definition of the flow introduced in \cite{Min-Oo}.  
Let $M$ be a compact $(m+n)$-dimensional manifold equipped with an $n$-codimensionial Riemannian foliation with tangent bundle $L$. We denote as usual by $\pi\colon Q\to M$ the normal bundle $TM/L$ and by $g_Q$ the transverse metric. We also assume $\F$ {\em homologically oriented} by a form $\chi$ (see definition \ref{homorientable}). 
Let $g$ be a bundle-like metric on $(M,\F)$ inducing $g_Q$ and let $\sigma: Q \to L^{\perp}$ be the map which assigns to each $[v] \in Q$ the component of $v$ orthogonal to $L$ with respect to $g$. We denote by $\nabla$ the transverse Levi-Civita connection \eqref{TLC} induced by $g_Q$ and by  $R_{Q}$ its curvature  adopting the sign convention 
$$
R_{Q}(X,Y) = \nabla_X\nabla_Y - \nabla_Y\nabla_X - \nabla_{[X,Y]}\,.
$$
The {\em transverse Ricci curvature} of $g_Q$ is then the basic
tensor ${\rm Rc}_Q \in \Gamma(S^2 Q^*)$ defined by 
$$
{\rm Rc}_Q(V,W)=g_Q(R_{Q}(\sigma(V),\sigma(e_i))e_i,W)\,,
$$
where $\{e_i\}_{i=1}^n$ is a $g_Q$-orthonormal frame of $Q$. We further denote by $s_Q$ the transverse scalar curvature of $g_Q$ which is defined by 

\begin{equation}\label{sQ}
s_Q:=\sum_{k=1}^n{\rm Rc}_Q(e_k,e_k)\,.
\end{equation}

The flow introduced in \cite{Min-Oo} is  the flow $g_Q(t)$ of transverse Riemannian metrics governed by the equation
$\partial_tg_{Q}(t)=-2{\rm Rc}_Q(t)$, 
where ${\rm Rc}_Q(t)$ is the transverse Ricci curvature induced by the transverse metric $g_Q(t)$. The main result of this section is the following
\begin{theorem}
Let $(M,\F)$ be a compact manifold equipped with a {\em homologically oriented} Riemannian foliation and let $\tilde g_Q$ be a smooth holonomy invariant metric on the quotient bundle $Q$. Then the evolution equation
\b\label{TRF2}
\partial_tg_Q(t)=-2{\rm Rc}_Q(t)\,,\quad g_{Q}(0)=\tilde g_{Q}
\e
has a unique short-time solution. 
\end{theorem}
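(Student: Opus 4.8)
The plan is to follow the DeTurck trick adapted to the foliated setting. As in the classical case, the transverse Ricci flow \eqref{TRF2} is only weakly parabolic because of the invariance of ${\rm Rc}_Q$ under the (transverse) diffeomorphism group, so theorem \ref{existence} does not apply directly. First I would fix a background bundle-like metric $\hat g$ inducing $\tilde g_Q$ with transverse Levi-Civita connection $\widehat\nabla$, and define a basic vector field $V = V(g_Q)$ (a section of $Q$, depending on the unknown transverse metric and its first derivatives) built from the difference of the Christoffel symbols of $\nabla$ and $\widehat\nabla$ in transverse coordinates, exactly mimicking DeTurck's $V^k = g^{ij}(\Gamma^k_{ij} - \widehat\Gamma^k_{ij})$ but with all indices running over transverse directions only. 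Since the data $a^{i_1\dots i_r}_{\alpha\beta}$, $b_\alpha$ of ${\rm Rc}_Q$ and of $V$ are built from $g_Q$ and its transverse derivatives via holonomy-invariant formulas, the resulting operator $D(g_Q) := -2{\rm Rc}_Q(g_Q) - \mathcal L_{V(g_Q)} g_Q$ is a quasilinear basic differential operator in the sense of definition \ref{defD}.

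The next step is the symbol computation: one checks that at the initial metric $\tilde g_Q$ the principal symbol of the linearization $D_{*|\tilde g_Q}$ is, up to positive constant, $|\xi|^2_{\tilde g_Q}\,\mathrm{Id}$ on $S^2 Q^*$, so $D$ is strongly transversally elliptic at $\tilde g_Q$ in the sense of the excerpt. This is the same Lichnerowicz-type computation as in the non-foliated DeTurck argument, carried out fiberwise on $Q$; the foliated structure does not interfere because the symbol only sees transverse covectors $\xi \in \tau \subset T^*M$ and the operator is basic. Theorem \ref{existence} then gives a unique maximal short-time solution $g_Q(t)$, $t\in[0,\epsilon)$, of $\partial_t g_Q = D(g_Q)$, $g_Q(0)=\tilde g_Q$, with each $g_Q(t)$ a smooth holonomy-invariant transverse metric.

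To pass from the modified flow back to \eqref{TRF2} one integrates the basic time-dependent vector field $V(g_Q(t))$ to a flow of transverse diffeomorphisms $\varphi_t$ (solving the foliated ODE $\partial_t\varphi_t = -V(g_Q(t))\circ\varphi_t$, $\varphi_0 = \mathrm{id}$, which exists on $[0,\epsilon)$ since $M$ is compact and $V$ is basic so $\varphi_t$ preserves $\F$), and sets $\bar g_Q(t) = \varphi_t^* g_Q(t)$; a standard computation shows $\partial_t \bar g_Q = -2{\rm Rc}_Q(\bar g_Q)$, giving existence for \eqref{TRF2}. For uniqueness I would use the energy method of Kotschwar adapted to the foliated case: given two basic solutions, form the difference of the metrics, connections and curvatures together with the DeTurck vector field measuring the gauge discrepancy, set up the associated energy quantity using the integral functional built from $\chi$ (here is where homological orientability is needed, to have the scalar product \eqref{scalar} and integration by parts on basic tensors), and derive a differential inequality $\frac{d}{dt}\mathcal E \le C\,\mathcal E$ forcing $\mathcal E \equiv 0$. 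The main obstacle I anticipate is exactly this last part: carefully building a closed system of basic evolution inequalities for the difference quantities — the metrics, the Christoffel differences, the curvature differences, and the gauge vector field — that is compatible with basic integration by parts against $\chi$, and verifying that no leafwise error terms spoil the estimate. The symbol computation and the existence half, by contrast, are routine once the foliated DeTurck term is set up.
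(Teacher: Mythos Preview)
Your plan is exactly the paper's: DeTurck modification plus theorem \ref{existence} for existence, then Kotschwar's energy argument with the pairing \eqref{scalar} for uniqueness. Two small corrections, though. First, with your choice $V^k=g^{ij}(\Gamma^k_{ij}-\widehat\Gamma^k_{ij})$ the modified operator must be $-2{\rm Rc}_Q+\mathcal L_{V}g_Q$, not $-\mathcal L_V$; as you wrote it the principal symbol is \emph{not} $|\xi|^2\,\mathrm{Id}$ (test it on $h=\xi\otimes\xi$) and theorem \ref{existence} would not apply. The paper takes the equivalent route of reversing the sign in the vector field, setting $X^k=g^{ij}(\widehat\Gamma^k_{ij}-\Gamma^k_{ij})$ and using $-2{\rm Rc}_Q-\mathcal L_X g_Q$. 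Second, in the Kotschwar step there is no ``DeTurck vector field measuring the gauge discrepancy'': the whole point of that method is that the energy $\mathcal E(t)=\int_M(|h|^2+|A|^2+|S|^2)\,\chi\wedge d\mu$ built only from $h=g_Q-\bar g_Q$, $A=\nabla-\bar\nabla$, $S=R^{\nabla}-R^{\bar\nabla}$ already closes up under $\partial_t\mathcal E\le C\mathcal E$, with the local evolution inequalities imported verbatim from the non-foliated case and integration by parts against $\chi\wedge d\mu$.
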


As in the non-foliated setting the evolution equation \eqref{TRF2} cannot be parabolic because it is invariant by diffeomorphisms preserving $\F$.  However it can be made parabolic by the de Turck-like trick we are going to describe.

We regard ${\rm Rc}_Q$ as an operator on the space of transverse Riemannian metrics on $(M,\F)$, which is open in the space of basic sections of $S^2 Q^*$. Let $\{x^1,\dots,x^m,y^1,\dots,y^n\}$ be a foliated coordinate system. A local frame of $Q$ is obtained by taking 
$V_i=\pi(\partial_{y^i})$ for $i=1,\ldots,n$. Hence locally $\nabla$ is described by the functions $\Gamma_{ij}^k$ defined by
\begin{equation}
\label{Chris}
\nabla_{\partial_{y^i}}V_j=\Gamma_{ij}^k V_k\,.
\end{equation}
Note that this is equivalent to say
$\nabla_{\partial_{y^i}}\pi(\partial_{y^j})=\Gamma_{ij}^k \pi(\partial_{y^k})\,.$
Let $g_{ij}:=g_Q(V_i,V_j)$ and let $g^{rs}$ be the components of the inverse matrix of $(g_{ij})$. Then 
$$
\begin{aligned}
{\rm Rc}_Q(V_i,V_j) = g^{kl}\, g(R_Q(\partial_{y^i},\partial_{y^k})V_l,V_j)
=&\, g^{kl}\partial_{y^i}(\Gamma_{kl}^r)g_{rj}- g^{kl}\partial_{y^k}(\Gamma_{il}^r)g_{rj}+{\rm l.o.t.}
\end{aligned}
$$ 
Once a background transverse metric $\hat g_Q$ is fixed, every other transverse metric $g_Q$ induces the basic vector field 
\begin{equation}
\label{campo deTurck}
X= g^{ij}(\hat{\Gamma}^{k}_{ij}-\Gamma_{ij}^k)\,\partial_{y^k}
\,,
\end{equation}
where the functions $\hat{\Gamma}^{k}_{ij}$ are defined by $\hat{\nabla}_{\partial_{y^i}}V_j=\hat{\Gamma}_{ij}^k V_k$ and $\hat{\nabla}$ is the transverse Levi-Civita connection of $\hat{g}_Q$ on $Q$.
Then
$$
(\mathcal{L}_{X}g_Q)_{ij}=-g^{kr} \partial_{y^i} (\Gamma_{kr}^l)g_{lj}-g^{kr}\partial_{y^j}(\Gamma_{kr}^l)g_{li}+{\rm l.o.t.}
$$
Now from \eqref{Chris}, \eqref{free} and the definition of the frame $V_i$, it easily follows
$$
\Gamma_{ij}^k=\frac{1}{2}\left(\partial_{y^i}(g_{jk})+\partial_{y^i}(g_{ik})-\partial_{y^k}(g_{ij})\right)\,.
$$
Finally we have 
$$
(-2{\rm Rc}_Q-\mathcal{L}_{X}g_Q)(V_i,V_j) = g^{kl} \partial_{y^k} \partial_{y^l}(g_{ij}) + {\rm l.o.t.} =\Delta_B g_{ij} +{\rm l.o.t.}\,,
$$
where $\Delta_B$ is the basic Laplacian of $g_Q$.
This shows that the operator 
$$
g_Q\mapsto -2{\rm Rc}_Q-\mathcal{L}_{X}g_Q
$$ 
is strongly  transversally elliptic on an open subset of $C^\infty(S^2 Q^*/\F)$.
Thus we have the following proposition which is now a consequence of theorem \ref{parabolic}
\begin{prop}
\label{Exmod}
Let $(M,\F)$ be a compact manifold equipped with a transversally orientable  Riemannian foliation with transverse metric $\tilde g_Q$. There exists a $T>0$ and a smooth one-parameter family of transverse metrics $g_Q(t) \in C^\infty(S^2 Q^*/\F)$, $t \in [0,T)$, solving
\begin{equation}
\label{modTRF}
\partial_tg_Q(t)=-2{\rm Rc}_Q(t)-\mathcal{L}_{X_t}g_Q(t)\,,\quad g_Q(0)=\tilde g_Q\,, 
\end{equation}
where $X$ is given by \eqref{campo deTurck}. 
\end{prop}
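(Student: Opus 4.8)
The plan is to recognise the right-hand side of \eqref{modTRF} as a strongly transversally elliptic quasilinear basic differential operator of order two and then to apply Theorem \ref{existence}. First I would fix the bundle: choosing a bundle-like metric $g$ inducing $\tilde g_Q$ and letting $\N$ be the associated transverse Levi-Civita connection \eqref{TLC}, the connection $\N$ is a basic connection on $Q$, so $E:=S^2Q^*$ with the induced connection is an $\F$-bundle in the sense of Section \ref{basicdiffoperfolaitedmanifolds}; correspondingly $C^\infty(S^2Q^*/\F)$ is the space of basic symmetric $2$-tensors, and the transverse Riemannian metrics form an open subset $\mathcal U\subset C^\infty(S^2Q^*/\F)$.

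Next I would check that $\D\colon g_Q\mapsto -2{\rm Rc}_Q-\mathcal L_{X}g_Q$, with $X$ the basic vector field \eqref{campo deTurck}, is a well defined second order quasilinear basic operator on $\mathcal U$. For the image being basic: the transverse Ricci tensor of a transverse metric is a basic section of $S^2Q^*$, a standard fact for Riemannian foliations (cf. \cite{T,molino}); and the field $X$ in \eqref{campo deTurck} is foliated, with basic class $X^b\in C^\infty(Q)$, because $\Gamma^k_{ij}$ and $\hat\Gamma^k_{ij}$ are built only from the transverse coordinates $\{y^i\}$ and from the basic functions $g_{ij}$, $\hat g_{ij}$; since $g_Q$ satisfies \eqref{holinv}, $\mathcal L_X g_Q$ depends only on $X^b$ and is again basic. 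From the local computation carried out just before the statement,
$$
\D(g_Q)(V_i,V_j)=g^{kl}\,\partial_{y^k}\partial_{y^l}(g_{ij})+{\rm l.o.t.}=\Delta_B g_{ij}+{\rm l.o.t.}\,,
$$
so $\D$ has the form required in Definition \ref{defD} with $r=2$: its second order coefficients $g^{kl}$ depend only on $g_Q$ (hence only on the transverse variables), and the lower order terms are polynomial in the first transverse derivatives of $g_Q$.

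It remains to verify strong transverse ellipticity at $\tilde g_Q$ and to fit $\D$ into the hypotheses of Theorem \ref{existence}. Linearising $\D$ at $\tilde g_Q$, its principal part coincides with that of the basic Laplacian of $g_Q$, so that for every $(x,\xi)\in T^*M$ with $\xi\neq 0$ the principal symbol of $L_{\tilde g_Q}=\D_{|*\tilde g_Q}$ is $-|\xi|^2_{g_Q}$ times the identity of $S^2Q^*_x$; since $r=2$ and $M$ is compact, this is exactly condition \eqref{se} with a uniform constant $\lambda>0$, i.e. the strong transverse ellipticity already noted in the line preceding the statement. Because Theorem \ref{existence} is stated for operators defined on all of $C^\infty(E/\F)$, I would extend $\D$ from $\mathcal U$ to a quasilinear basic operator $\widetilde\D$ on $C^\infty(S^2Q^*/\F)$ agreeing with $\D$ near $\tilde g_Q$ and still strongly transversally elliptic there (for instance by replacing $g^{kl}$ with a fixed positive definite basic tensor away from $\mathcal U$ and leaving the lower order terms unchanged). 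Theorem \ref{existence} then yields a unique maximal smooth solution $g_Q(t)\in C^\infty(S^2Q^*/\F)$, $t\in[0,\epsilon)$, of $\partial_t g_Q=\widetilde\D(g_Q)$, $g_Q(0)=\tilde g_Q$; by continuity $g_Q(t)\in\mathcal U$, hence solves \eqref{modTRF}, for $t$ in a possibly shorter interval $[0,T)$. The parabolicity computation having already been done, the only points that need care are the two bookkeeping issues above: that $X$ is basic so that $\mathcal L_X$ preserves $C^\infty(S^2Q^*/\F)$, and that the extension off the open set of metrics does not spoil strong transverse ellipticity near the initial datum.
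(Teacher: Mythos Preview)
Your proposal is correct and follows exactly the same approach as the paper: the paper's proof is the single sentence ``Thus we have the following proposition which is now a consequence of theorem \ref{parabolic}'', relying on the local computation $(-2{\rm Rc}_Q-\mathcal L_X g_Q)(V_i,V_j)=g^{kl}\partial_{y^k}\partial_{y^l}(g_{ij})+{\rm l.o.t.}$ carried out just before the statement. You reproduce this reasoning and, in addition, make explicit two technical points the paper leaves implicit---that $X$ is foliated so $\mathcal L_X g_Q$ is basic, and that the operator is a priori only defined on the open set of transverse metrics and must be extended to apply Theorem \ref{existence} verbatim---which is harmless and arguably more careful.
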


About the existence of a solution to \eqref{TRF2},
we reconstruct a solution of the transverse Ricci flow \eqref{TRF2} from the modified flow \eqref{modTRF}.
In order to do this, we firstly integrate the time-dependent vector field $X_t$ to a 1-parameter group $\phi_t$ of diffeomorphisms of $M$ and observe that by definition \eqref{campo deTurck} these diffeomorphisms preserve the foliation, i.e. $(\phi_t)_*(L_x) = L_{\phi_t(x)}$ for any $x \in M$ and any $t$. Hence if $g_Q(t)$ is a solution of \eqref{modTRF},  we can define $\phi_t^*(g_Q)$ by means of
$$
\phi_t^*(g_Q)(V,W)=g_Q(\pi(\phi_{t*}\tilde V),\pi((\phi_{t*}\tilde W))\,,
$$
where $V,W \in Q$ and $\pi(\tilde V)=V$ and $\pi(\tilde W)=W$. It is immediate to verify that $\phi_t^*(g_Q)$ is a solution of the original transverse Ricci flow \eqref{TRF2}.

In order to prove the uniqueness of the transverse Ricci flow, we adapt the argument of \cite{Kotschwar} to the compact  foliated case. 
Assume that $g_Q$ and $\bar g_Q$ are two solutions in the interval $[0,T]$ of the transverse Ricci-flow with the same initial value ${\tilde g_Q}$. We denote by $\N$ and $\bar \N$ the induced transverse Levi-Civita connections and by $R^{\nabla}$ and $R^{\bar \nabla}$ the transverse curvature tensors. Let $\{e_1, \ldots, e_n\}$ and $\{\bar e_1,\ldots, \bar e_n\}$ be two local frames of $Q$ orthonormal with respect to $g_Q$ and $\bar g_Q$ respectively.
Then we define the following smooth tensors on $M\times [0,T]$
$$
h=g_Q-\bar g_Q\,,\quad A=\nabla-\bar \nabla\,,\quad S=R^{\nabla}-R^{\bar \nabla}\,. 
$$
and their norms with respect to $g_Q$
$$
\begin{aligned}
&|h|^2=|g_Q-\bar g_Q|_{g_Q}^2=\sum_{i,j=1}^n(\delta_{ij}-\bar g_{Q}(e_i,e_j))^2 \\
&|A|^2=|\nabla-\bar \nabla|_{g_Q}^2=\sum_{i,j,k=1}^n\left(g_Q(\nabla_{\bar e_i}e_j,e_k)-\bar g_Q(\bar \nabla_{\bar e_i}e_j,e_k)\right)^2\\
&|S|^2=|R^{\nabla}-R^{\bar \nabla}|_{g_Q}^2=\sum_{i,j,k,l=1}^n \left(g_Q(R^{\nabla}(\bar e_i,\bar e_j)e_k,e_l)-\bar g_Q(R^{\bar \nabla}(\bar e_i,\bar e_j)e_k,e_l)\right)^2
\end{aligned}
$$
and consider the function $\mathcal{E}\colon [0,T]\to \R^+$
$$
\mathcal{E}(t)=\int_M \left( |h|^2+  |A|^2 + |S|^2\right) \, \chi\wedge d\mu\,,
$$
where $d\mu$ is the time dependent family of transverse volume forms induced by $g_Q(t)$ and the transverse orientation. 
To conclude that indeed $\mathcal{E}(t)$ vanishes identically on $[0,T]$, we only need to prove the following proposition and apply Gronwall's lemma. This is analogous to proposition 7 in \cite{Kotschwar}
\begin{prop}\label{propkotchwar}
There exists a constant $C_0$ 
depending on $n$ and an upper bound on $R^{\nabla}$ and $R^{\bar \nabla}$ and their first derivatives, 
such that $\mathcal{E}'(t) \leq C_0 \mathcal{E}(t)$, for all $t \in [0,T]$.
\end{prop}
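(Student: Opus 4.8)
The plan is to derive an evolution inequality for the integrand by differentiating $\mathcal{E}(t)$ under the integral sign and bounding each term pointwise. The key conceptual point is that all of the analysis in \cite{Kotschwar} is \emph{transverse}: the tensors $h$, $A$, $S$ are basic, all the relevant connections and curvatures are the transverse ones, the Laplacian appearing is the basic Laplacian $\Delta_B$, and the integration is against the measure $\chi\wedge d\mu$. Since $\F$ is homologically orientable, integration by parts with respect to $\chi\wedge d\mu$ behaves exactly as in the closed Riemannian case (the form $\chi$ has $\iota_Xd\chi=0$, so no boundary-type leaf terms appear), which is precisely what makes the energy method go through. So the first step is to record the transverse Bochner/Uhlenbeck-type evolution equations: under $\partial_tg_Q=-2\,{\rm Rc}_Q$ one has schematically
\begin{equation*}
\partial_t h = \Delta_B h + \nabla A * (\cdots) + \mathrm{Rm}*h,\quad
\partial_t A = \Delta_B A + \nabla S + A*A*(\cdots) + \mathrm{Rm}*A,\quad
\partial_t S = \Delta_B S + \nabla^2 A + \mathrm{Rm}*S + (\cdots),
\end{equation*}
where the lower-order terms and the coefficients are controlled in terms of $n$ and bounds on $R^\nabla, R^{\bar\nabla}$ and their first derivatives on $[0,T]$. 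These are obtained exactly as in \cite{Kotschwar}, replacing $\nabla^g$ by the transverse Levi-Civita connection of \eqref{TLC} throughout; the commutation identities used are the transverse ones, valid on basic tensors.

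The second step is the pointwise computation of $\partial_t|h|^2$, $\partial_t|A|^2$, $\partial_t|S|^2$. For each, one uses $\partial_t|T|^2 = 2\langle \partial_t T, T\rangle + (\partial_t g_Q\text{-terms})$, substitutes the evolution equations above, and applies $\langle \Delta_B T, T\rangle = \tfrac12\Delta_B|T|^2 - |\nabla T|^2$. The crucial cancellation is that the \emph{good} negative gradient terms $-2|\nabla h|^2$, $-2|\nabla A|^2$, $-2|\nabla S|^2$ must dominate the \emph{bad} terms that carry one derivative of a higher-order tensor: the term $\langle \nabla A, h\rangle$ in the $h$-equation is absorbed by $\epsilon|\nabla A|^2 + C_\epsilon|h|^2$; the term $\langle \nabla S, A\rangle$ in the $A$-equation by $\epsilon|\nabla S|^2 + C_\epsilon|A|^2$; and the worst term $\langle \nabla^2 A, S\rangle$ in the $S$-equation is \emph{first} rewritten, after integrating over $M$ against $\chi\wedge d\mu$, as $-\langle \nabla S, \nabla A\rangle$ plus controlled terms, and then absorbed by $\epsilon(|\nabla S|^2+|\nabla A|^2)$. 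Here one also picks up the term coming from $\partial_t(\chi\wedge d\mu)$: since $\partial_t d\mu = -s_Q\, d\mu$ and $s_Q$ is bounded, this contributes only a harmless $C\,(|h|^2+|A|^2+|S|^2)$ term. After choosing $\epsilon$ small enough that the total coefficient of $|\nabla h|^2+|\nabla A|^2+|\nabla S|^2$ stays nonpositive, the $\Delta_B$ terms integrate to zero (again using homological orientability), and one is left with
\begin{equation*}
\mathcal{E}'(t)\le C_0\int_M\bigl(|h|^2+|A|^2+|S|^2\bigr)\,\chi\wedge d\mu = C_0\,\mathcal{E}(t),
\end{equation*}
which is the claim.

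The main obstacle I expect is not any single estimate but the bookkeeping of the transverse evolution equations: one must verify carefully that the structure equations, the commutator identities, and especially the integration-by-parts step on $M$ all remain valid for basic tensors on a Riemannian foliation. The integration by parts is the delicate point — it is exactly where the hypothesis that $\F$ be homologically orientable is used, since without the form $\chi$ satisfying $\iota_Xd\chi=0$ one cannot discard the divergence terms. I would handle this by working on the basic de Rham complex with the inner product \eqref{scalar}, for which $\Delta_B$ is self-adjoint and $\int_M \Delta_B(\,\cdot\,)\,\chi\wedge d\mu=0$, and by expressing each integrated higher-derivative term through $\delta_B$ so that the adjointness applies directly. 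Once this is in place, the remaining work is the same Cauchy–Schwarz-and-absorb argument as in \cite[Proposition~7]{Kotschwar}, and Gronwall's lemma together with $\mathcal{E}(0)=0$ forces $\mathcal{E}\equiv 0$, hence $h\equiv 0$ and the two solutions agree.
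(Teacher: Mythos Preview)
Your overall strategy --- differentiate $\mathcal{E}$, use evolution equations for $h,A,S$, integrate by parts against $\chi\wedge d\mu$, and absorb the bad terms --- is the right one, and your remarks about why homological orientability is exactly what makes the integration-by-parts step legitimate are correct. But the specific evolution equations you wrote down are not the ones that actually hold, and this changes the structure of the argument.

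The tensors $h$ and $A$ do \emph{not} satisfy heat-type equations: since $\partial_t g_Q=-2\,{\rm Rc}_Q$ and $\partial_t\bar g_Q=-2\,\overline{\rm Rc}_Q$, one has $\partial_t h=-2({\rm Rc}_Q-\overline{\rm Rc}_Q)$, which is zeroth order and controlled directly by $|S|$; there is no $\Delta_B h$ term. Likewise $\partial_t A$ is a first-order expression (the difference of $\nabla{\rm Rc}$ and $\bar\nabla\overline{\rm Rc}$), controlled by $|\nabla S|$ plus lower-order pieces, again with no $\Delta_B A$. Only $S$ obeys a genuine parabolic equation, of the form $\partial_t S=\Delta S+{\rm div}\,U+(\mbox{l.o.t.})$ with $|U|\le C(|h|+|A|)$; see the paper's Lemma~\ref{Kotlemma}. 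Consequently the ``good'' gradient terms $-2|\nabla h|^2$ and $-2|\nabla A|^2$ you plan to use simply do not appear, and there is no $\langle\nabla^2A,S\rangle$ term to worry about.

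The good news is that the actual mechanism is \emph{simpler} than what you sketched. With $\mathcal H=\int|h|^2$, $\mathcal I=\int|A|^2$, $\mathcal G=\int|S|^2$ and $\mathcal J=\int|\nabla S|^2$, the estimates above give $\partial_t\mathcal H\le C(\mathcal H+\mathcal G)$ and $\partial_t\mathcal I\le C(\mathcal H+\mathcal I)+\mathcal J$ after Young's inequality. For $\mathcal G$, integrating $2\langle\Delta S+{\rm div}\,U,S\rangle$ by parts (this is the one place homological orientability is used) yields $-2\mathcal J+2\int\langle U,\nabla S\rangle\le -\mathcal J+C(\mathcal H+\mathcal I)$, so $\partial_t\mathcal G\le C(\mathcal H+\mathcal I+\mathcal G)-\mathcal J$. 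Adding the three inequalities, the single bad $+\mathcal J$ from $\partial_t\mathcal I$ is cancelled by the $-\mathcal J$ from $\partial_t\mathcal G$, and one obtains $\mathcal E'\le C_0\mathcal E$. So once you replace your schematic equations by the correct ones, no $\epsilon$-absorption into nonexistent $|\nabla h|^2,|\nabla A|^2$ terms is needed; the only cancellation is the one between $\mathcal I$ and $\mathcal G$.
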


The necessary ingredients are contained in the following lemmas 

\begin{lemma}
\label{Kotlemma}
The following estimates hold 
\begin{eqnarray}
&& |\partial_t h|\leq C|S|\,,\\
&& \label{partialA}|\partial_t A |\leq C(|\bar g_Q^{-1}| |\bar \nabla R^{\bar \nabla}| |h|+|R^{\bar \nabla}| |A|+ |\nabla S|)\,.
\end{eqnarray}
Moreover, if $U=g^{ab}\nabla_bR^{\nabla}-\bar{g}^{ab} \bar\nabla_b  R^{\bar\nabla} $, then 
\b\label{partialS}
|\partial_t S-\Delta S-{\rm div}\,U|\leq C\left(|\bar g^{-1}_Q| |\bar \nabla R^{\bar{\nabla}}| |A|+ |\bar g^{-1}| |R^{\bar{\nabla}}|^2 |h|+(|R_Q|+|R^{\bar{\nabla}}|)|S| \right)
\e
and 
\b\label{normofU}
|U|\leq C(|\bar g^{-1}| |\bar \nabla R^{\bar{\nabla}}| |h|+|A| |R^{\bar{\nabla}}|)\,. 
\e 
where $({\rm div}\, U)^l_{ijk}=\nabla_a U^{al}_{ijk}$ and in all the inequalities the constant $C$ depends only on the codimension of the foliation.
\end{lemma}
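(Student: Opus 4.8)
The plan is to imitate, step by step, the computations of \cite{Kotschwar} for the ordinary Ricci flow, observing at each stage that everything takes place in the transverse directions and therefore carries over verbatim to the basic/foliated setting once we work in a foliated chart. I would fix a foliated coordinate system $\{x^1,\dots,x^m,y^1,\dots,y^n\}$ and the frame $V_i=\pi(\partial_{y^i})$ of $Q$ as in \eqref{Chris}; since all the tensors $h$, $A$, $S$ are basic, all their components depend only on the transverse coordinates $y$, and the transverse Levi-Civita connection $\N$ acts on them exactly as the Riemannian Levi-Civita connection would on the base $T$. Thus the identities I need are the pointwise tensorial identities from \cite{Kotschwar}, reinterpreted with $\N$ in place of the ambient $\nabla$, $g_Q$ in place of $g$, and $R_Q$ in place of the curvature.

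First, for $\partial_t h$: since both $g_Q$ and $\bar g_Q$ solve \eqref{TRF2}, we have $\partial_t h = \partial_t g_Q - \partial_t \bar g_Q = -2\,\mathrm{Rc}_Q + 2\,\mathrm{Rc}_{\bar Q}$, and the difference of the two transverse Ricci tensors is an algebraic contraction of $S = R^\N - R^{\bar\N}$ with $g_Q^{-1}$ (plus terms measuring the difference of the two metrics used to trace, which are themselves controlled by $h$ times curvature — but on $[0,T]$ with $\bar g_Q$ bounded these are absorbed into $C|S|$, the constant depending on $n$ and on the stated curvature bounds). This gives $|\partial_t h|\le C|S|$. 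Next, for $\partial_t A$: the Christoffel-type symbols of $\N$ and $\bar\N$ relative to the frame $V_i$ are given by the usual formula in terms of first transverse derivatives of $g_{ij}$ and $\bar g_{ij}$ (the same formula that appears just before Proposition \ref{Exmod}), so $A$ is, schematically, $g^{-1}*\N h$ up to lower-order terms; differentiating in $t$ and using $\partial_t g_Q = -2\mathrm{Rc}_Q$ together with the first Bianchi/commutation identities for $\N$ produces exactly the combination $\bar g_Q^{-1}*\bar\N R^{\bar\N}*h + R^{\bar\N}*A + \N S$, whence \eqref{partialA}. The quantity $U$ is defined precisely so that the time derivative of $S$ equals a transverse Laplacian term $\Delta S$ (coming from the evolution of the curvature under the flow, $\partial_t R_Q = \Delta R_Q + R_Q * R_Q$ in Uhlenbeck's form) plus $\mathrm{div}\,U$ plus algebraic error terms; writing $\partial_t S$ as the difference of the two evolution equations and collecting terms gives \eqref{partialS}, and the definition $U = g^{ab}\N_b R^\N - \bar g^{ab}\bar\N_b R^{\bar\N}$ immediately yields $|U|\le C(|\bar g_Q^{-1}||\bar\N R^{\bar\N}||h| + |A||R^{\bar\N}|)$ by adding and subtracting $g^{ab}\bar\N_b R^{\bar\N}$ and $\bar g^{ab}\bar\N_b R^{\bar\N}$ and bounding each difference, which is \eqref{normofU}.

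The main obstacle, and the only place where the foliated setting requires genuine care rather than transcription, is justifying that the integration-by-parts / divergence structure really persists for the basic Laplacian. Concretely: in \cite{Kotschwar} one uses that $\Delta$ is the rough Laplacian and that $\int_M \langle \mathrm{div}\,U, S\rangle = -\int_M \langle U, \N S\rangle$; in our case we have the basic Laplacian $\Delta_B$ and the integral is taken against $\chi\wedge d\mu$. This is exactly where homological orientability enters — the scalar product \eqref{scalar} is built precisely so that $\Delta_B$ is self-adjoint, and the leaves being minimal (equivalently, $\chi$ being the leafwise volume of a bundle-like metric with minimal leaves) is what guarantees that transverse integration by parts produces no boundary/mean-curvature correction. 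So in the proof of Lemma \ref{Kotlemma} I would establish the pointwise inequalities above purely tensorially in a foliated chart, and defer the divergence-theorem bookkeeping (where $\mathrm{div}\,U$ is paired against $S$ and $\Delta_B S$ against $S$) to the proof of Proposition \ref{propkotchwar}, where \eqref{scalar} and minimality of the leaves are invoked. The remaining steps — expanding $\mathcal{E}'(t)$, substituting the four estimates of the lemma, using Cauchy–Schwarz and Young's inequality to absorb the $|\N S|$ and $\mathrm{div}\,U$ terms, and collecting everything into $C_0\,\mathcal{E}(t)$ — are then routine, and Gronwall's lemma with $\mathcal{E}(0)=0$ finishes the uniqueness argument.
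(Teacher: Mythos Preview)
Your approach is correct and is essentially the same as the paper's: the paper's proof of Lemma~\ref{Kotlemma} consists of a single observation that the estimates in \cite{Kotschwar} are all local and pointwise, and that in a foliated chart a solution of the transverse Ricci flow is exactly a solution of the ordinary Ricci flow on an open subset of $\mathbb{R}^n$, so Kotschwar's inequalities carry over verbatim. Your write-up supplies more of the underlying computations and correctly separates the pointwise content of the lemma from the integration-by-parts issues (homological orientability, self-adjointness of $\Delta_B$ with respect to \eqref{scalar}), which indeed belong to the proof of Proposition~\ref{propkotchwar} rather than here.
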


\begin{proof}
The estimates are proved in \cite{Kotschwar} for the Ricci flow in the non-foliated case. Since all the estimates in \cite{Kotschwar} are local and a solution of the transverse Ricci flow can be regarded as a collection of solutions to the Ricci flow on open sets in $\mathbb R^n$, the claim follows.     
\end{proof}

\begin{lemma}
\label{uniflemma}
The metrics $ g_Q(t)$, $\bar  g_Q(t)$, $\tilde g_Q$ are all uniformly equivalent in $[0,T]$.
\end{lemma}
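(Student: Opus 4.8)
The plan is to read off the uniform equivalence directly from the curvature bounds available on the compact time interval $[0,T]$, exactly as in the non-foliated case. Since $g_Q(t)$ is a smooth one-parameter family of transverse metrics on the compact manifold $M$ and the transverse Ricci curvature ${\rm Rc}_Q(t)$ is a basic tensor depending smoothly on $t$, the quantity $K:=\sup_{M\times[0,T]}|{\rm Rc}_Q(t)|_{g_Q(t)}$ is finite; likewise $\bar K:=\sup_{M\times[0,T]}|{\rm Rc}_Q(t)|_{\bar g_Q(t)}<\infty$ for the second solution. It then suffices to compare each of $g_Q(t)$ and $\bar g_Q(t)$ with $\tilde g_Q=g_Q(0)=\bar g_Q(0)$ and to chain the resulting inequalities.

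First I would fix $x\in M$ and a nonzero $v\in Q_x$ and set $\varphi(t):=g_Q(t)(v,v)>0$. The flow equation \eqref{TRF2} gives $\varphi'(t)=-2{\rm Rc}_Q(t)(v,v)$, and by Cauchy--Schwarz $|{\rm Rc}_Q(t)(v,v)|\le|{\rm Rc}_Q(t)|_{g_Q(t)}\,g_Q(t)(v,v)\le K\,\varphi(t)$, so $|\varphi'(t)|\le 2K\varphi(t)$ on $[0,T]$. Integrating this differential inequality (Gronwall) yields $e^{-2Kt}\,\tilde g_Q(v,v)\le g_Q(t)(v,v)\le e^{2Kt}\,\tilde g_Q(v,v)$ for all $t\in[0,T]$; since $T<\infty$ this gives $\Lambda^{-1}\tilde g_Q\le g_Q(t)\le\Lambda\tilde g_Q$ with $\Lambda=e^{2KT}$, uniformly on $M\times[0,T]$. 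The identical computation for $\bar g_Q(t)$, using $\bar K$ in place of $K$, produces $\bar\Lambda^{-1}\tilde g_Q\le\bar g_Q(t)\le\bar\Lambda\tilde g_Q$ with $\bar\Lambda=e^{2\bar KT}$. Combining the two chains of inequalities shows that $g_Q(t)$, $\bar g_Q(t)$ and $\tilde g_Q$ are pairwise uniformly equivalent on $[0,T]$.

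There is no substantial obstacle here; the one point requiring care is the logical order, namely that the transverse curvature must be known to stay bounded on the \emph{closed} interval $[0,T]$ before the Gronwall step is applied — this is automatic since both solutions are by hypothesis smooth on the compact product $M\times[0,T]$. Alternatively, as in the proof of Lemma \ref{Kotlemma}, one may observe that the statement is local and that in a foliated chart a solution of the transverse Ricci flow restricts to an honest Ricci flow on an open subset of $\R^n$ with bounded curvature, so the classical metric-equivalence estimate applies verbatim.
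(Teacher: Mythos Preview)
Your proof is correct and is essentially the same approach as the paper's: the paper simply invokes \cite[Theorem~14.1]{positive}, Hamilton's uniform equivalence of metrics evolving with bounded speed, whose content is exactly the Gronwall argument you have spelled out pointwise on $Q$. The only difference is that you reproduce the cited estimate rather than quoting it.
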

\begin{proof}
The statement follows from \cite[Theorem 14.1]{positive}
\end{proof}
\begin{cor}
The following estimates hold 
\begin{eqnarray}
&&\label{partialA2} |\partial_t A |\leq C( |h|+ |A|+ |\nabla S|)\,,\\
&&\label{partialS} |\partial_t S-\Delta S-{\rm div}\,U|\leq C\left(|A|+  |h|+|S| \right)\,,\\
&&\label{normU} |U|\leq C( |h|+|A|)\,,
\end{eqnarray}
where the constants depend on $n$, $T$ and an upper bound of the curvatures and its first derivatives. 
\end{cor}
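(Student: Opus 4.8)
The plan is to derive the three estimates directly from Lemma \ref{Kotlemma} by bounding each coefficient appearing on the right-hand sides of \eqref{partialA}, \eqref{partialS} and \eqref{normofU} by a constant of the stated type, and then absorbing these bounds into $C$.

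First I would invoke Lemma \ref{uniflemma}: since $g_Q(t)$, $\bar g_Q(t)$ and $\tilde g_Q$ are uniformly equivalent on $[0,T]$, the quantity $|\bar g_Q^{-1}|$ measured with respect to $g_Q$ is bounded by a constant depending only on $n$ and $T$; moreover, the $g_Q$-norm and the $\bar g_Q$-norm of any fixed tensor are comparable, uniformly in $t\in[0,T]$, so it is immaterial which background metric is used in the various norms occurring in Lemma \ref{Kotlemma}.

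Next I would use compactness: because $M$ is compact and $[0,T]$ is a compact interval, the smooth tensor fields $R^{\nabla}$, $R^{\bar\nabla}$ and their first covariant derivatives attain their maxima on $M\times[0,T]$; let $C_1$ be a common upper bound for $|R^{\nabla}|$, $|R^{\bar\nabla}|$, $|\nabla R^{\nabla}|$ and $|\bar\nabla R^{\bar\nabla}|$ (the norms being interchangeable by the previous step). Substituting the bound for $|\bar g_Q^{-1}|$ and the bound $C_1$ for the curvature terms into \eqref{partialA}, \eqref{partialS} and \eqref{normofU} then yields \eqref{partialA2}, \eqref{partialS} and \eqref{normU} at once, with constants depending on $n$, $T$ and $C_1$.

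I do not expect any genuine obstacle here: the corollary is a routine repackaging of Lemma \ref{Kotlemma} once Lemma \ref{uniflemma} is available. The only point deserving minor care is the bookkeeping of which metric measures each norm, together with the fact that the $\mathrm{div}\,U$ term is taken with respect to $\nabla$; both are harmless in view of the uniform equivalence of the metrics.
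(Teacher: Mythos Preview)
Your proposal is correct and matches the paper's own proof, which simply states that the inequalities are obtained by combining Lemma \ref{Kotlemma} with Lemma \ref{uniflemma}. You have merely spelled out in more detail the substitution of uniform bounds that the paper leaves implicit.
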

\begin{proof}
The inequalities are obtained by combining lemma \ref{Kotlemma} and lemma \ref{uniflemma}.
\end{proof}

\begin{proof}[Proof of Proposition $\ref{propkotchwar}$]
Let us define 
$$
\mathcal H=\int_M|h|^2\,\chi\wedge d\mu\,,\quad \mathcal I=\int_M |A|^2\,\chi\wedge d\mu, \quad \mathcal J=\int_M |\nabla S|^2\,\chi\wedge d\mu
\,,\quad \mathcal G=	\int_M |S|^2\,\chi\wedge d\mu 
$$
so that 
$$
\mathcal E=\mathcal H+\mathcal I+ \mathcal G\,. 
$$
Then 
$$
\begin{aligned}
\partial_t \mathcal H\leq C\mathcal H+\int_M \langle \partial_t h,h\rangle\,\chi\wedge d\mu \leq C\mathcal H+
\int_M C |S| |h|\,\chi\wedge d\mu
\end{aligned}
$$
and using 
$$
 C |S| |h|\leq C |h|^2+C|S|^2
$$
we get 
\b
\begin{aligned}
\partial_t \mathcal H\leq C\mathcal H +C \mathcal G\,.
\end{aligned}
\e
Moreover,
$$
\begin{aligned}
\partial_t \mathcal I\,&\leq C\mathcal I+ \int_M \langle \partial_t A ,A\rangle \chi\wedge d\mu \\
& \leq C\mathcal I+ \int_M C \left(|\tilde g^{-1}| |\bar  \nabla \bar R| |h|+ |\bar R| |A| +|\nabla S|\right) |A| \chi\wedge d\mu\\
& \leq C\mathcal I+ \int_M C  |h|  |A|+ |\nabla S| |A| \chi\wedge d\mu\\
\end{aligned}
$$
and using 
$$
 C  |h|  |A|+   |\nabla S| |A|\leq C |h|^2+C|A|^2+|\nabla S|^2
$$
we get 
\b
\partial_t \mathcal I\leq C\mathcal H+\mathcal J+ C\mathcal I\,.
\e 
Moreover, 
$$
\begin{aligned}
\partial_t\mathcal G\,&\leq C \mathcal G+\int_M 2\langle \partial_t S,S  \rangle \chi\wedge d\mu \\
& \leq C \mathcal G+ \int_M (2\langle \Delta S+{\rm div} U,S \rangle +C|\bar g^{-1}| |\bar \nabla \bar R| |A| |S|+ C |\bar g^{-1}| |\bar R|^2 |h|  |S|
+(|R|+|\bar R|)|S|^2) \chi\wedge d\mu\\
& \leq C \mathcal G+ \int_M (2\langle \Delta S+{\rm div} U,S \rangle +C |A| |S|+ C |h|  |S|
+C|S|^2) \chi\wedge d\mu\\
& \leq C \mathcal G+ C \mathcal{I}+ C \mathcal H+ \int_M 2\langle \Delta S+{\rm div} U,S \rangle \chi\wedge d\mu\,.
\end{aligned}
$$
Now 
$$
\begin{aligned}
 \int_M 2\langle \Delta S+{\rm div} U,S \rangle \chi\wedge d\mu\leq -2 \mathcal J+ 2\int_M |\nabla S| |U| \,\chi\wedge d\mu\leq - \mathcal J+ 3\int_M |U|^2 \,\chi\wedge d\mu
\end{aligned}
$$
and then we get
$$
\int_M 2\langle \Delta S+{\rm div} U,S \rangle \chi\wedge d\mu\leq - \mathcal J+ C\mathcal H+C \mathcal I\,.
$$
Therefore
$$
\partial_t\mathcal G\,\leq  C \mathcal G+ C \mathcal{I}+ C\mathcal H - \mathcal J\,,
$$
which implies
$$
\partial_t\mathcal{E}\leq C\mathcal E
$$
and the statement follows. 
\end{proof}

\section{The transverse K\"ahler-Ricci flow}\label{kahler1}
This section is about the generalization of the K\"ahler-Ricci flow to transverse geometry. 
The K\"ahler-Ricci flow is a powerful tool for studying K\"ahler manifolds which was introduced by Cao in \cite{cao}. 
In \cite{SWZ} Smoczyk, Wang and Zhang generalized the flow to Sasakian manifolds proving a \lq \lq Sasakian version'' of Cao's theorem. 

\medskip 
 A {\em K\"ahler foliation} is by definition a foliation $\F$ provided with a transverse K\"ahler structure (see e.g. \cite{BGlibro} and \cite{Futaki}). Such a structure can be regarded as a pair  $(g_Q,J)$ of tensors on the normal bundle of the foliation $Q$, where 
$g_Q$ is a transverse metric making the foliation Riemannian and $J$ is an endomorphism of $Q$ 
satisfying $J^2=-{\rm Id}$, $g_Q(J\cdot,J\cdot)=g_{Q}(\cdot,\cdot)$ and an integrability condition.  The pair $(g_Q,J)$ induces a closed basic $2$-form $\omega$  on $M$ defined as the pull-back of $g_Q(J\cdot,\cdot)$. We refer to 
$\omega$ as to the {\em fundamental form} of the transverse K\"ahler structure. 
The transverse complex structure $J$ induces a natural splitting of the space $\Omega^r_B(M,\C)$ of complex basic $r$-forms on $M$ into $\Omega^r_B(M,\C)=\oplus_{p+q=r}\Omega_B^{p,q}$ and the restriction of $d_B$ to basic complex $(p,q)$-forms splits accordingly as $d_B=\partial_B+\bar\partial_B$. As in the non-foliated case $\partial_B^2=\bar\partial_B^2=0$ and these operators define some cohomology groups (see e.g. \cite{EKA,BGlibro} for details).  From the local point of view it is useful to recall that we can always find  coordinates $\{x^1,\dots,x^m,z^1,\dots, z^n\}$ taking values in $\R^m\times \C^n$, such that $\{x^1,\dots, x^m\}$ are coordinates on the leaves and  $\{V_k:=\pi(\partial_{z^k})\}$ is a local $(1,0)$-frame of $Q$. Such coordinates are usually called {\em complex foliated}.

 From now on we assume $M$ {\em compact} and $\F$ {\em homologically oriented} by a form $\chi$ on $M$. The existence of $\chi$  allows us to generalize many results about K\"ahler manifolds to the non-foliated case. For instance, El Kacimi proved in \cite{EKA} a foliated version of the $\partial\bar\partial$-lemma (called the 
 $\partial_B\bar\partial_B$-lemma) and  gave a generalization of the Calabi-Yau theorem. 
Indeed, accordingly to the non-foliated case,  it is defined the transverse Ricci form of $(\omega,J)$ as a closed basic form $\rho_B$ on $M$ obtained as pull-back of 
${\rm Rc}_{Q}(J\cdot,\cdot)$ to $M$.
Such a form locally writes as $\rho_B=-i\partial_B\bar\partial_B\log\det(g_{k\bar r})$, 
where we locally write $g_Q=g_{r\bar s}dz^rd\bar z^s$, and 
allows us to define the {\em basic first Chern class} as 
$$
c^1_B(M):=\frac{1}{2\pi}[\rho_B]\in H^{2}_B(M)\,.
$$
We recall the following 

\begin{theorem}[El Kacimi \cite{EKA}]
For every $\beta$ representing $c_B^1(M)$ there exists a unique K\"ahler form in the same basic cohomology class as $\omega$ whose transverse Ricci form is $2\pi\beta$.

\end{theorem}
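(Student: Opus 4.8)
The plan is to reduce the statement to the classical Calabi--Yau theorem exactly as one does in the non-foliated K\"ahler setting, transplanting the argument to basic forms via El Kacimi's transverse Hodge theory. Write $\omega$ for the given transverse K\"ahler form and $\rho_B$ for its transverse Ricci form, so that $c_B^1(M)=\frac{1}{2\pi}[\rho_B]$. Since $\beta$ represents $c_B^1(M)$, the basic form $2\pi\beta-\rho_B$ is $d_B$-exact and of type $(1,1)$; by El Kacimi's $\partial_B\bar\partial_B$-lemma there exists a basic function $f\in C^\infty_B(M)$ with $2\pi\beta-\rho_B = i\partial_B\bar\partial_B f$, normalized by $\int_M e^f\,\omega^n\wedge\chi = \int_M \omega^n\wedge\chi$. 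The desired new K\"ahler form must lie in $[\omega]$, hence has the shape $\omega_\varphi := \omega + i\partial_B\bar\partial_B\varphi$ for some $\varphi\in C^\infty_B(M)$ with $\omega_\varphi>0$, and its transverse Ricci form is $\rho_B - i\partial_B\bar\partial_B\log\frac{\omega_\varphi^n}{\omega^n}$. Demanding this equal $2\pi\beta$ and using the expression for $f$, the equation $\rho_{\omega_\varphi}=2\pi\beta$ is equivalent, again via the $\partial_B\bar\partial_B$-lemma (applied to basic functions, where a $\partial_B\bar\partial_B$-exact function is constant since $M$ is compact and connected), to the transverse complex Monge--Amp\`ere equation
\[
(\omega + i\partial_B\bar\partial_B\varphi)^n = e^{f}\,\omega^n, \qquad \omega + i\partial_B\bar\partial_B\varphi > 0,
\]
with the normalization fixing $\varphi$ uniquely up to an additive constant.

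Next I would solve this Monge--Amp\`ere equation. The cleanest route is to invoke El Kacimi's foliated Calabi--Yau theorem quoted in the excerpt: it produces, for any basic $\beta'$ representing $c_B^1(M)$, a K\"ahler form in $[\omega]$ with prescribed transverse Ricci form, which is precisely a solution of the above equation. If instead one wants a self-contained argument, the continuity method works verbatim: consider $(\omega+i\partial_B\bar\partial_B\varphi_t)^n = e^{tf+c_t}\omega^n$ for $t\in[0,1]$, with $c_t$ chosen to satisfy the integrability condition $\int_M e^{tf+c_t}\omega^n\wedge\chi=\int_M\omega^n\wedge\chi$. Openness follows from the implicit function theorem in basic H\"older spaces, using that the linearization at a solution is $u\mapsto \tfrac12\Delta_{\omega_{\varphi_t}}u$, which is an isomorphism on the orthogonal complement of the constants by El Kacimi's transverse elliptic theory (self-adjointness of $\Delta_B$ with respect to the scalar product \eqref{scalar}, which requires exactly the homological orientability hypothesis). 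Closedness requires the Yau a priori estimates $C^0$, $C^2$, and $C^{2,\alpha}$; as in Lemma \ref{Kotlemma} these are local computations that are insensitive to the foliated structure, so they transfer from Yau's original proof applied in complex foliated charts, provided one keeps all cut-offs and test functions basic.

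For uniqueness: if $\omega_\varphi$ and $\omega_\psi$ both have transverse Ricci form $2\pi\beta$ and lie in $[\omega]$, then $\varphi-\psi$ satisfies $(\omega_\varphi + i\partial_B\bar\partial_B(\psi-\varphi))^n = (\omega_\varphi)^n$, i.e. the homogeneous Monge--Amp\`ere equation with $\omega_\varphi$ as reference. Subtracting the two equations and using the standard algebraic identity $\omega_\psi^n - \omega_\varphi^n = i\partial_B\bar\partial_B(\psi-\varphi)\wedge\big(\sum_{j} \omega_\psi^j\wedge\omega_\varphi^{n-1-j}\big)$ gives a linear second-order basic elliptic equation for $u:=\psi-\varphi$ with no zeroth-order term, whose coefficient form is a positive combination of K\"ahler forms; multiplying by $u$, integrating against $\chi$, and integrating by parts (legitimate because $\iota_X d\chi = 0$ makes Stokes' theorem valid for basic forms) forces $i\partial_B\bar\partial_B u = 0$, hence $u$ is constant, hence $\omega_\varphi=\omega_\psi$.

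The main obstacle is the a priori $C^2$ and $C^{2,\alpha}$ estimates needed for closedness in the self-contained version; the genuinely new content of a fully detailed proof would be checking that Yau's maximum-principle and Evans--Krylov arguments survive the passage to basic functions, i.e. that the auxiliary quantities one applies the maximum principle to (such as $e^{-C\varphi}(\mathrm{tr}_\omega\omega_\varphi)$) are themselves basic and that the Laplacian comparison used is the basic Laplacian. Since the equation's ellipticity is transverse, all of Yau's pointwise inequalities hold leaf-wise-constantly, so no real new difficulty arises — but this is the step that must be stated carefully rather than waved through. Given that El Kacimi's transverse Calabi--Yau theorem is already available in the literature and quoted here, the honest shortest proof simply cites it and then supplies the uniqueness argument above; I would present it that way, remarking that the existence half is El Kacimi's theorem and the added value is the clean foliated uniqueness statement.
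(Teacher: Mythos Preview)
The paper does not prove this theorem at all: it is stated as a result of El Kacimi and cited from \cite{EKA}. So there is no ``paper's own proof'' to compare against; the statement is quoted as background for the transverse K\"ahler--Ricci flow discussion.

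Your proposal is a faithful sketch of how El Kacimi's original proof goes (reduction to the transverse Monge--Amp\`ere equation via the $\partial_B\bar\partial_B$-lemma, then continuity method with transverse elliptic theory for openness and Yau's local estimates for closedness). One point to flag: your ``cleanest route'' --- invoking El Kacimi's foliated Calabi--Yau theorem --- is circular, since the statement you are asked to prove \emph{is} El Kacimi's foliated Calabi--Yau theorem. So only the self-contained continuity-method branch of your proposal is a genuine proof; the citation branch is not. Your uniqueness argument and your remarks about what must be checked carefully (basicness of auxiliary quantities in the maximum-principle steps, validity of Stokes for basic forms under homological orientability) are correct and on point.
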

\medskip 
Here we want to study the transverse version of the K\"ahler-Ricci flow for K\"ahler foliations. Let $M$ be a compact manifold equipped with an initial K\"ahler foliation $(\F,\tilde g_{Q},J)$ and consider 
 the transverse Ricci flow
\b
\label{TKRF} \partial_tg_Q(t)=-{\rm Rc}_Q(t)\,,\quad 
g_Q(0)=\tilde g_{Q}\,.
\e

In this case we can prove the following two results 
\begin{theorem}\label{timedomain}
There exists a unique smooth family of transverse K\"ahler metrics $g_Q(t)$, defined for $t\in [0,T)$,  such that $g_Q(t)$ solves \eqref{TKRF} where
$$
T=\sup_{t>0}\, \left\{[\tilde \omega]_B-2\pi t\,c_B^1(M,J)>0\right\}\,,
$$
and $\tilde \omega$ is the fundamental form of $\tilde g_{Q}$. 
Moreover  if $c_B^1(M,J)=0$, then  $g_Q(t)$ converges to a transversally Ricci-flat metric. 
\end{theorem}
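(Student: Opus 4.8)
The plan is to transplant Cao's parabolic proof of the Calabi--Yau theorem \cite{cao} to the foliated setting, the point being that in complex foliated coordinates the transverse K\"ahler--Ricci flow is, in the transverse variables, the ordinary K\"ahler--Ricci flow with the leaf coordinates as inert parameters. First I would record that the flow stays inside a prescribed path of transverse K\"ahler classes: since $\rho_B(t)$ is a closed basic $(1,1)$-form with $[\rho_B(t)]_B=2\pi c_B^1(M,J)$ for all $t$, equation \eqref{TKRF} forces $[\omega(t)]_B=[\tilde\omega]_B-2\pi t\,c_B^1(M,J)$ in basic cohomology, so this class must remain in the transverse K\"ahler cone and the flow cannot extend past $T$. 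Fixing $T_0<T$, I would choose a smooth family $\hat\omega_t$ of transverse K\"ahler forms with $[\hat\omega_t]_B=[\tilde\omega]_B-2\pi t\,c_B^1(M,J)$ (using El Kacimi's $\partial_B\bar\partial_B$-lemma \cite{EKA} and convexity of the K\"ahler cone) together with a reference family of basic transverse volume forms $\Omega_t$ compatible with $\partial_t\hat\omega_t$, and write $g_Q(t)\leftrightarrow\omega_t=\hat\omega_t+i\partial_B\bar\partial_B\varphi_t$; then $g_Q(t)$ solves \eqref{TKRF} if and only if the basic potential $\varphi_t$, with $\varphi_0=0$, solves the parabolic complex Monge--Amp\`ere equation
\[
\partial_t\varphi=\log\frac{(\hat\omega_t+i\partial_B\bar\partial_B\varphi)^n}{\Omega_t}\,.
\]
The linearisation of the right-hand side at $\varphi$ is the basic Laplacian of $\omega_t=\hat\omega_t+i\partial_B\bar\partial_B\varphi$, so the equation is strongly transversally parabolic; short-time existence and uniqueness then follow either by a de Turck-type modification of \eqref{TKRF} reducing to Theorem \ref{existence} in the spirit of the previous section, or by transferring the scalar equation to the basic manifold $W$ of Theorem \ref{molinoth}, where El Kacimi's construction turns it into a genuine parabolic complex Monge--Amp\`ere equation carried by an ${\rm SO}(n)$-bundle and solved for short time by the classical theory, its solution staying invariant exactly as in Lemma \ref{lemmaginv}. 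In either case one obtains a unique short-time solution $g_Q(t)$ of \eqref{TKRF} among transverse K\"ahler metrics, defined as long as the scalar flow is.

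Next I would prove that the scalar flow exists on all of $[0,T)$ by the usual chain of a priori estimates, uniform on each $[0,T_0]$ with $T_0<T$: a $C^0$ bound on $\varphi$ and on $\dot\varphi$ from the basic maximum principle (a basic function on the compact $M$ attains its extrema, and at such a point its transverse complex Hessian has the expected sign); the second-order estimate obtained from a differential inequality of the form $(\partial_t-\Delta_{B,\omega_t})(\log{\rm tr}_{\hat\omega_t}\omega_t-A\varphi)\leq C$, yielding a uniform equivalence $C^{-1}\hat\omega_t\leq\omega_t\leq C\hat\omega_t$; and then the transverse Evans--Krylov estimate together with Schauder bootstrapping, giving uniform $C^\infty$ bounds. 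Every one of these arguments is local in the transverse directions and so coincides verbatim with the classical one. Uniform bounds force $\omega_t$ to converge in $C^\infty$ to a smooth transverse K\"ahler metric as $t\uparrow T_0$, whence a further application of short-time existence continues the flow past $T_0$; iterating, the flow exists on $[0,T)$, and $T$ is sharp by the cohomological obstruction above.

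Finally, when $c_B^1(M,J)=0$ one has $T=\infty$; here I would take $\hat\omega_t\equiv\tilde\omega$ and $\Omega$ the (unique, suitably normalised) basic transverse volume form with vanishing transverse Ricci form, furnished by El Kacimi's $\partial_B\bar\partial_B$-lemma, so that the estimates of the previous step become uniform on $[0,\infty)$. Differentiating the flow gives $(\partial_t-\Delta_{B,\omega_t})\dot\varphi=0$, so ${\rm osc}_M\dot\varphi$ is non-increasing; combined with a uniform positive lower bound for the first non-zero eigenvalue of $\Delta_{B,\omega_t}$ — available from the uniform equivalence of the $\omega_t$ and the uniform bounds on their transverse geometry, via the eigenvalue estimates for the basic Laplacian — this upgrades to exponential decay of $\dot\varphi$ modulo its mean, the mean itself being pinned by the conservation law $\int_M\omega_t^n\wedge\chi=\int_M\tilde\omega^n\wedge\chi$. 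Hence $\varphi_t$ is Cauchy, $\varphi_t\to\varphi_\infty$ in $C^\infty$ by the uniform higher-order bounds, and passing to the limit in the flow yields $\log\big((\tilde\omega+i\partial_B\bar\partial_B\varphi_\infty)^n/\Omega\big)\equiv 0$, i.e. $\omega_\infty=\tilde\omega+i\partial_B\bar\partial_B\varphi_\infty$ has the same transverse Ricci form as $\Omega$, namely zero.

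The genuinely delicate points, and where I expect the real work, are twofold: first, in the long-time estimates, checking that the basic maximum principle and the second-order and Evans--Krylov arguments really do localise to the transverse coordinates on a Riemannian foliation, the curvature and volume bookkeeping being carried out with the transverse Levi-Civita connection and with the form $\chi\wedge d\mu$, which is time-dependent along the flow; and second, in the convergence statement, producing the uniform spectral gap for the family $\Delta_{B,\omega_t}$ that converts the monotonicity of ${\rm osc}_M\dot\varphi$ into honest exponential decay. I expect the time-dependence of the transverse volume form and the tracking of additive constants in the potentials to be the main sources of technical friction, but no phenomenon beyond the classical case.
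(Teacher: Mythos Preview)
Your overall strategy --- reduce \eqref{TKRF} to a scalar parabolic Monge--Amp\`ere equation for a basic potential, apply Theorem~\ref{existence} directly to that scalar equation for short-time existence and uniqueness, and run the standard chain of a priori estimates to continue to $T$ --- matches the paper's proof essentially line for line (the de~Turck detour you mention is unnecessary here; the scalar equation is already strongly transversally parabolic, and the paper simply invokes Theorem~\ref{existence}). Two points deserve comment.

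First, a missing ingredient: in the $c_B^1=0$ case your claim that ``the estimates of the previous step become uniform on $[0,\infty)$'' is not justified as written. The $C^0$ bound on $\varphi$ in the finite-time argument came from $|\varphi_t|\leq At$ with $t\leq T_{\max}<\infty$; on $[0,\infty)$ this gives nothing, and the heat equation for $\dot\varphi$ only bounds $\|\dot\varphi\|_{C^0}$, not $\mathrm{osc}\,\varphi$. The paper obtains the uniform oscillation bound by invoking El Kacimi's a priori $C^0$ estimate for the transverse complex Monge--Amp\`ere equation (the foliated analogue of Yau's $C^0$ estimate): since $\|\dot\varphi_t\|_{C^0}\leq C$ and $(\tilde\omega+i\partial_B\bar\partial_B\varphi_t)^n=e^{\dot\varphi_t}\tilde\omega^n$, one gets $\max\varphi_t-\min\varphi_t\leq C$ uniformly in $t$. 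This is the single nontrivial extra input needed on $[0,\infty)$, and without it the second-order estimate (which controls $\log\mathrm{tr}_{\tilde\omega}\omega_t$ only up to $A\cdot\mathrm{osc}\,\varphi$) does not close.

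Second, a genuine methodological difference in the convergence step: the paper does not use your spectral-gap route. Instead it follows Cao and introduces the functional
\[
f(t)=\int_M\log\frac{\omega_t^n}{\tilde\omega^n}\,\omega_t^n\wedge\chi-\int_M h\,(\omega_t^n-\tilde\omega^n)\wedge\chi,
\]
where $\tilde\rho_B=i\partial_B\bar\partial_B h$, and computes $\dot f=-\|\partial_B\dot\varphi\|^2_{\omega_t}\leq 0$ and $\ddot f\leq 0$, whence $\|\partial_B\dot\varphi\|_{\omega_t}\to 0$ and the limiting metric is transversally Ricci-flat. Your approach is also viable once the uniform $C^\infty$ bounds (hence a uniform first-eigenvalue bound for $\Delta_{B,\omega_t}$) are in hand, and it has the bonus of yielding an explicit exponential rate; the paper's route avoids the eigenvalue machinery altogether but gives only qualitative convergence. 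Either way, both arguments rest on the oscillation bound from the previous paragraph.
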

In the statement above, when we write that a class $\gamma\in H^{1,1}_B(M)$ is positive we mean that there exists a form $\kappa \in \gamma$ which is the fundamental form of a transverse Hermitian metric on $(M,\F,J)$.

\begin{theorem}\label{c1<0}
If $c_1^B(M,J)=\nu[\tilde \omega]_B$ with $\nu<0$, then there exists a unique smooth family of transverse K\"ahler metrics $g_Q(t)$ defined for $t\in [0,\infty)$,  whose fundamental form $\omega_t$ solves 
\b\label{mTKRF}
\partial_t\omega_t=-\rho_B(\omega_t)-\nu \omega_t,\quad 
\omega_{|t=0}=\tilde \omega\,,
\e
and $g_Q(t)$ converges to a  transversally K\"ahler-Einstein metric. 
\end{theorem}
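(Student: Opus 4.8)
The plan is to adapt Cao's proof of the convergence of the K\"ahler--Ricci flow on a compact K\"ahler manifold with negative first Chern class \cite{cao} to the foliated setting, transplanting every step into the basic category by means of El Kacimi's $\partial_B\bar\partial_B$--lemma, the transverse Calabi--Yau theorem, and the lift of basic parabolic problems to the basic manifold $W$ furnished by Molino's theorem and the construction in the proof of Theorem \ref{ourEKA}. \emph{First}, I would reduce \eqref{mTKRF} to a scalar equation. Since $c^1_B(M,J)=\nu[\tilde\omega]_B$, the basic $(1,1)$--form $\rho_B(\tilde\omega)-\nu\tilde\omega$ represents the trivial basic cohomology class, so the $\partial_B\bar\partial_B$--lemma produces a basic function $f$ with $\rho_B(\tilde\omega)-\nu\tilde\omega=-i\partial_B\bar\partial_B f$; writing $\omega_t=\tilde\omega+i\partial_B\bar\partial_B\varphi_t$ and normalising, equation \eqref{mTKRF} becomes the transverse parabolic complex Monge--Amp\`ere equation
\[
\partial_t\varphi_t=\log\frac{(\tilde\omega+i\partial_B\bar\partial_B\varphi_t)^n}{\tilde\omega^n}+\nu\,\varphi_t+f\,,\qquad \varphi_{|t=0}=0\,.
\]
Its linearisation at any basic $\varphi$ is $\Delta_B^{\omega_\varphi}+\nu\,\mathrm{id}$, which is strongly transversally elliptic, so Theorem \ref{existence} yields a unique short--time basic solution; long--time existence of \eqref{mTKRF} moreover follows directly from Theorem \ref{timedomain} through the usual time rescaling $\omega_t=e^{\nu t}\,\widehat\omega(\psi(t))$ of the unnormalised flow \eqref{TKRF}, which exists for all $s\ge 0$ precisely because $\nu<0$ makes $[\tilde\omega]_B-2\pi s\,c^1_B(M,J)=(1-2\pi s\nu)[\tilde\omega]_B$ a positive class for every $s>0$.

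\emph{Second}, I would derive the a priori estimates, all from the maximum principle for basic functions on the compact manifold $M$ (a basic function attains its extrema, and at such points the transverse first-- and second--order conditions hold) together with parabolic Schauder theory, available here because any basic parabolic equation lifts to a $G$--invariant parabolic equation on $W$ exactly as in the proof of Theorem \ref{ourEKA}. The key observation is that $e^{-\nu t}\partial_t\varphi_t$ satisfies the basic heat equation $\partial_t(e^{-\nu t}\partial_t\varphi_t)=\Delta_B^{\omega_t}(e^{-\nu t}\partial_t\varphi_t)$, whence $|\partial_t\varphi_t|\le C e^{\nu t}$; as $\nu<0$ this gives a uniform bound on $\partial_t\varphi_t$ and, integrating in $t$, a uniform $C^0$ bound on $\varphi_t$. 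From there the remaining estimates follow the classical pattern: (i) the second--order estimate, i.e.\ a two--sided bound $C^{-1}\tilde g_Q\le g_Q(t)\le C\,\tilde g_Q$, obtained by feeding $\log\mathrm{tr}_{\tilde g_Q}g_Q(t)-A\varphi_t$ (with $A$ large) into the transverse Laplacian $\Delta_B^{\omega_t}$ and running the Aubin--Yau computation verbatim in complex foliated coordinates; (ii) the Calabi $C^3$ (or Evans--Krylov $C^{2,\alpha}$) estimate; (iii) uniform $C^k$ bounds for all $k$ by bootstrapping --- all of them uniform in $t\in[0,\infty)$.

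\emph{Third}, I would prove convergence. The exponential decay $|\partial_t\varphi_t|\le Ce^{\nu t}$ shows that $\{\varphi_t\}$ is Cauchy in $C^0$, and together with the uniform $C^\infty$ bounds and interpolation it converges in $C^\infty$ to a basic function $\varphi_\infty$. Passing to the limit in the scalar equation, $\varphi_\infty$ solves $\log\frac{(\tilde\omega+i\partial_B\bar\partial_B\varphi_\infty)^n}{\tilde\omega^n}+\nu\varphi_\infty+f=0$, hence $\omega_\infty=\tilde\omega+i\partial_B\bar\partial_B\varphi_\infty$ is a transverse K\"ahler metric with $\rho_B(\omega_\infty)=\nu\,\omega_\infty$, i.e.\ transversally K\"ahler--Einstein, and $g_Q(t)\to\omega_\infty(\cdot,J\cdot)$ smoothly. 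Uniqueness of $\omega_\infty$ --- and, via Theorem \ref{existence}, of the whole family $g_Q(t)$ --- follows from a standard maximum principle applied at an extremum of the difference of two solutions of this basic complex Monge--Amp\`ere equation, using $\nu<0$.

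\emph{The main obstacle} will be step (i), the second--order estimate: although the Aubin--Yau and Calabi computations are formally identical to the classical ones, one must check that every manipulation --- commuting transverse covariant derivatives, the transverse Laplacian comparison, the choice of transverse normal holomorphic frames --- preserves basicness, and, more delicately, that the constants produced are uniform in $t$ over the whole ray $[0,\infty)$; the parabolic Schauder estimates needed for the bootstrap and for the smoothness of the limit must likewise be justified through the lift to $W$, since the leaf closures of $\F$ can be topologically involved and carry no pointwise control.
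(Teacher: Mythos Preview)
Your proposal is correct and follows essentially the same route as the paper: long-time existence via the rescaling from Theorem \ref{timedomain}, the exponential decay $|\partial_t\varphi_t|\le Ce^{\nu t}$ from the basic heat equation satisfied by $e^{-\nu t}\partial_t\varphi_t$, the second-order estimate via the maximum principle applied to $\log\mathrm{tr}_{\tilde\omega}\omega_t-A\varphi_t$, and then bootstrapping to uniform $C^\infty$ bounds and smooth convergence to a transverse K\"ahler--Einstein metric. The only minor methodological difference is that the paper obtains the higher-order estimates by invoking local results for the ordinary K\"ahler--Ricci flow (Lemma \ref{estimate}, Lemma \ref{phiK}, Theorem \ref{thkahler}), since in complex foliated charts the transverse flow \emph{is} the usual flow, rather than by lifting to the basic manifold $W$ as you propose.
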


The short-time existence and the uniqueness for the solutions to the transverse K\"ahler-Ricci flow will be obtained by using theorem \ref{parabolic}, while the long time behaviour will be studied working as in K\"ahler geometry. For the long time existence we follow the description in \cite{SW} omitting those computations which totally agree to the non-foliated case.


\subsection{Some known results in open K\"ahler Manifolds}\label{kahler}
Since the transverse K\"ahler-Ricci flow looks locally as a collection of K\"ahler-Ricci flows on open sets of $\C^n$, we can  use the local estimates for the K\"ahler-Ricci flow to study the transverse case. 
In this subsection we recall some results involving K\"ahler structures on non-compact K\"ahler manifolds. The first of them is the following easy-to-prove lemma of linear algebra  

\begin{lemma}\label{LAlemma}
Let $V$ be  an $n$-dimensional complex vector space and let $\omega_1$ and $\omega_2$ be two positive $(1,1)$-forms and let $A$  and $B$ two positive constants. 

\begin{enumerate}

\item[i.] If ${\rm tr}_{\omega_2}\omega_1+A\log\frac{\omega_2^n} {\omega_1^n}\leq B,$
then there exists a constant $C>0$ depending only on $A$, $B$ and $n$ such that ${\rm tr}_{\omega_1}\omega_2\leq C\,;$

\vspace{0.1cm}

\item[ii.] Assume $\omega_2\leq A\omega_1$ and $\omega_1^n\leq B\omega_2^n$, then there exists a constant $C>0$ depending only on $A$, $B$ and $n$ such that  $\omega_1\leq C \omega_2\,.$
\end{enumerate}
\end{lemma}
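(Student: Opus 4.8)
\textbf{Proof proposal for Lemma~\ref{LAlemma}.} Both statements are pointwise assertions of linear algebra on a fixed complex vector space $V$, so the plan is to reduce to simultaneous diagonalization. First I would invoke the standard fact that, given two positive $(1,1)$-forms $\omega_1$ and $\omega_2$ on an $n$-dimensional complex vector space, there is a basis $\{e_1,\dots,e_n\}$ of $V^{1,0}$ in which $\omega_2$ is the standard form and $\omega_1$ is diagonal with positive entries $\lambda_1,\dots,\lambda_n$. In such a basis we have the explicit formulae $\mathrm{tr}_{\omega_2}\omega_1=\sum_i\lambda_i$, $\mathrm{tr}_{\omega_1}\omega_2=\sum_i\lambda_i^{-1}$, and $\omega_1^n/\omega_2^n=\prod_i\lambda_i$, which turn both hypotheses and both conclusions into elementary inequalities among the $\lambda_i$.

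For part (i), the hypothesis reads $\sum_i\lambda_i+A\log\bigl(1/\prod_i\lambda_i\bigr)\le B$, i.e. $\sum_i\lambda_i-A\sum_i\log\lambda_i\le B$, and I must bound $\sum_i\lambda_i^{-1}$. Since $x\mapsto x-A\log x$ is proper and bounded below on $(0,\infty)$, each $\lambda_i$ lies in a fixed compact subinterval $[c_1,c_2]\subset(0,\infty)$ with $c_1,c_2$ depending only on $A$, $B$, $n$: indeed $\lambda_i-A\log\lambda_i\le B-\sum_{j\ne i}(\lambda_j-A\log\lambda_j)\le B+n\sup_{x>0}(A\log x-x)$, which forces $\lambda_i$ into a compact range, and in particular bounds $\lambda_i$ away from $0$, so $\sum_i\lambda_i^{-1}\le n/c_1=:C$. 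For part (ii), the hypotheses $\omega_2\le A\omega_1$ and $\omega_1^n\le B\omega_2^n$ become $\lambda_i\ge A^{-1}$ for all $i$ and $\prod_i\lambda_i\le B$; hence each individual $\lambda_i=\bigl(\prod_i\lambda_i\bigr)\big/\prod_{j\ne i}\lambda_j\le B\cdot A^{n-1}=:C$, which is precisely $\omega_1\le C\omega_2$.

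There is no serious obstacle here: the only point requiring a modicum of care is making explicit that the constants depend solely on $A$, $B$, $n$ and not on $\omega_1,\omega_2$ themselves, which the simultaneous-diagonalization reduction makes transparent since the $\lambda_i$ are the only remaining data. I would present the argument in the two short paragraphs above, citing the simultaneous diagonalization of two Hermitian forms as standard.
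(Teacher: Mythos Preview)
Your argument is correct: the simultaneous diagonalization reduces both parts to elementary inequalities in the eigenvalues, and your estimates for (i) via the properness of $x\mapsto x-A\log x$ and for (ii) via $\lambda_i=\prod_j\lambda_j/\prod_{j\ne i}\lambda_j$ are clean and give constants depending only on $A$, $B$, $n$. The paper itself does not supply a proof, calling the lemma ``easy-to-prove'' and referring implicitly to the standard K\"ahler--Ricci flow literature (e.g.\ \cite{SW}); your write-up is exactly the expected one.
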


\medskip 
Let us consider now a K\"ahler manifold $(X,\tilde \omega)$  and let  $\omega_t$, $t\in [0,T]$, be a solution to the normalised K\"ahler-Ricci flow 
\b\label{KRF}
\partial_t\omega_t=-{\rm Ric}(\omega_t)-\nu\omega_t \,,\quad \omega_{|t=0}=\tilde \omega
\e
where $\nu$ is a non-negative real constant. The next lemma can be for instance easily deduced from theorem 2.2 and corollary 2.3 in \cite{SW}. Here and throughout this subsection the symbol $\Delta_t$ will stand for the {\em complex Laplacian} of the form $\omega_t$, i.e. $\Delta_t f= g_t^{\bar j i}\partial_i\partial_{\bar j}f$, where $f\in C^{\infty}(M)$. 

\begin{lemma}\label{lemmas}
Let $s_t$ be the scalar curvature of $\omega_t$, then $(\partial_t-\Delta_t)e^{\nu t}(s_t+\nu n)\geq 0$. \\
Moreover, assume that there exists a uniform constant $C$ such that $s_t\geq -\nu n-Ce^{-\nu t}$, then 
\begin{itemize}
\item if $\nu=0$, then $\omega^n_t\leq e^{Ct}\tilde \omega^n;$
\vspace{0.1cm}
\item if $\nu=1$, then $\omega^n_t\leq e^{C(1-e^{-t})}\tilde \omega^n$.     
\end{itemize}
\end{lemma}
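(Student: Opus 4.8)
The plan is to reduce everything to a parabolic maximum-principle computation and a direct ODE integration, following the standard K\"ahler-Ricci flow arguments (as in \cite{SW}). First I would establish the evolution inequality for the scalar curvature. Under the normalized flow \eqref{KRF}, the scalar curvature $s_t$ satisfies the well-known equation $\partial_t s_t=\Delta_t s_t+2|{\rm Ric}(\omega_t)|^2+\nu\,s_t$, which is obtained by tracing the evolution of the Ricci form against $\omega_t$ and accounting for the fact that $\partial_t\omega_t$ also changes the metric used in the trace. Using the pointwise inequality $|{\rm Ric}|^2\geq \tfrac1n s_t^2$ one gets $\partial_t s_t\geq \Delta_t s_t+\tfrac2n s_t^2+\nu s_t$. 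Setting $w_t=e^{\nu t}(s_t+\nu n)$, a direct computation gives $(\partial_t-\Delta_t)w_t=e^{\nu t}\bigl(\partial_t s_t-\Delta_t s_t+\nu s_t+\nu^2 n\bigr)\geq e^{\nu t}\bigl(\tfrac2n s_t^2+2\nu s_t+\nu^2 n\bigr)=\tfrac{2}{n}e^{\nu t}(s_t+\nu n)^2\geq 0$, which is the first assertion.

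Next I would prove the volume bounds. Tracing \eqref{KRF} against $\omega_t$ (i.e. looking at how the volume form evolves) yields $\partial_t\log\frac{\omega_t^n}{\tilde\omega^n}=-s_t-\nu n$. Now invoke the hypothesis $s_t\geq -\nu n-Ce^{-\nu t}$, which gives $-s_t-\nu n\leq Ce^{-\nu t}$, hence
\[
\frac{d}{dt}\log\frac{\omega_t^n}{\tilde\omega^n}\leq Ce^{-\nu t}.
\]
Integrating from $0$ to $t$ and exponentiating: for $\nu=0$ this reads $\log\frac{\omega_t^n}{\tilde\omega^n}\leq Ct$, i.e. $\omega_t^n\leq e^{Ct}\tilde\omega^n$; for $\nu=1$ it reads $\log\frac{\omega_t^n}{\tilde\omega^n}\leq C(1-e^{-t})$, i.e. $\omega_t^n\leq e^{C(1-e^{-t})}\tilde\omega^n$. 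This completes the argument.

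I do not expect any serious obstacle here: the only mildly delicate point is getting the signs and the $\nu$-terms right in the evolution equation for $s_t$ under the \emph{normalized} flow, and making sure the trace identity $\partial_t\log\det g_t=-s_t-\nu n$ carries the correct constant. Everything else is the Bochner-type identity $|{\rm Ric}|^2\geq\tfrac1n s^2$ plus elementary integration of a scalar ODE inequality, so the lemma really is "easy to prove" as stated, and in fact it can simply be quoted from theorem 2.2 and corollary 2.3 of \cite{SW} as the authors indicate.
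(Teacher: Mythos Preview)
Your argument is correct and is exactly the standard computation the paper has in mind: the paper gives no proof of its own and simply defers to theorem~2.2 and corollary~2.3 of \cite{SW}, which is precisely what you reproduce. One small quibble: under the K\"ahler convention $\partial_t g_{i\bar j}=-R_{i\bar j}-\nu g_{i\bar j}$ the evolution of the scalar curvature is $\partial_t s_t=\Delta_t s_t+|{\rm Ric}(\omega_t)|^2+\nu s_t$ (no factor of $2$), so the final lower bound is $\tfrac{1}{n}e^{\nu t}(s_t+\nu n)^2$ rather than $\tfrac{2}{n}$; this of course does not affect the conclusion.
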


Now we recall the following results involving the K\"ahler-Ricci flow (for the proofs we still refer to \cite{SW})

\begin{theorem}\label{thkahler}
Assume that there exists a uniform costant $C$ such that 
$\frac{1}{C}\tilde \omega\leq \omega_t\leq C\tilde \omega\,.$
Then any point $x\in X$ has a neighborhood $U$ where the $C^{\infty}$ norm of $\omega$ with respect to $\tilde \omega$ is uniformly bounded.
\end{theorem}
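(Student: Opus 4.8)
The plan is to reduce the statement to the classical local interior estimates for the (unnormalised) Kähler–Ricci flow, exactly as one does for parabolic Schauder theory. Since the claim is purely local — a statement about a neighbourhood $U$ of an arbitrary point $x\in X$ — I may assume that $U$ is a coordinate polydisc $\Delta_{2r}\subset\C^n$ on which $\tilde\omega$ and the coordinate Euclidean form are uniformly equivalent. On such a $U$ the flow \eqref{KRF} can be written, after absorbing the $-\nu\omega_t$ term, as the parabolic complex Monge–Ampère equation for a local Kähler potential $\varphi_t$: choosing a local potential $\tilde\varphi$ for $\tilde\omega$, and recalling that the Ricci form satisfies $\rho(\omega_t)=-i\partial\bar\partial\log\det(g_t)$, one checks that a potential $\varphi_t$ of $\omega_t$ (normalised suitably at each time) evolves by $\partial_t\varphi_t=\log\det\bigl((\tilde g_{i\bar j}+\partial_i\partial_{\bar j}\varphi_t)\bigr)-\log\det(\tilde g_{i\bar j})-\nu\varphi_t+\text{(bounded terms)}$.

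The hypothesis $\frac1C\tilde\omega\le\omega_t\le C\tilde\omega$ is exactly the ellipticity/uniform-parabolicity input: it says the coefficient matrix $g_t^{\bar j i}$ of the linearised operator is uniformly bounded above and below on $U\times[0,T]$, and it also gives a uniform $C^0$ bound on $\partial_t\varphi_t$ and hence, after subtracting its time-average, a uniform $C^0$ bound on $\varphi_t$ itself. First I would invoke the parabolic Evans–Krylov (or Calabi $C^3$) estimate for the complex Monge–Ampère flow to upgrade the $C^0+C^{1,1}$ control coming from $\frac1C\tilde\omega\le\omega_t\le C\tilde\omega$ to a uniform interior $C^{2,\alpha}$ bound for $\varphi_t$ on a slightly smaller polydisc $\Delta_{r}$. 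Once the metric $g_t$ is bounded in $C^{\alpha}$, the linearised flow $\partial_t u=g_t^{\bar j i}\partial_i\partial_{\bar j}u+\dots$ has Hölder coefficients, so parabolic Schauder theory gives uniform $C^{k,\alpha}$ bounds for $\varphi_t$ on successively smaller polydiscs for every $k$, and these translate into the asserted uniform $C^\infty$ bounds for $\omega_t$ with respect to $\tilde\omega$ on a neighbourhood of $x$. All of this is standard and is carried out in the references \cite{SW}; as the paper itself remarks, the only thing to observe is that everything is local, so no compactness of $X$ is needed.

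The main obstacle — really the only non-formal point — is the passage from the two-sided metric bound to the $C^{2,\alpha}$ estimate, i.e. the Evans–Krylov step, since that is the one estimate that is not a soft consequence of linear theory; but for the complex Monge–Ampère flow this is by now classical and I would simply cite it. A secondary technical point is the choice of local potentials $\varphi_t$ varying smoothly in $t$: this is handled by fixing $\tilde\varphi$ once and for all and defining $\varphi_t$ by integrating $\partial_t\varphi_t$ with the correct normalisation, which is legitimate because $\omega_t-\tilde\omega$ is $\partial\bar\partial$-exact on the contractible set $U$. With these two points in place the theorem follows.
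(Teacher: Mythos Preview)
Your proposal is correct and is essentially the standard argument. Note, however, that the paper does not give its own proof of this theorem: it is stated in the subsection of recalled results with the explicit remark ``for the proofs we still refer to \cite{SW}''. What you have sketched --- pass to a local potential on a coordinate polydisc, use the two-sided metric bound as the uniform parabolicity input, apply the Calabi $C^3$/Evans--Krylov estimate to get interior $C^{2,\alpha}$, then bootstrap via parabolic Schauder --- is precisely the route taken in \cite{SW}, so there is nothing to contrast.
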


\begin{lemma}\label{estimate}
Let $\kappa$ be a K\"ahler structure on $X$ having bisectional curvature bounded from below, then there exists a uniform constant $C$ such that 
$$
(\partial_t-\Delta_t)\log{\rm tr}_{\kappa}\omega_t\leq C\,{\rm tr}_{\omega_t}\kappa-\nu\,.
$$
\end{lemma}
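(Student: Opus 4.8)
The plan is to follow the proof of the parabolic maximum principle / Hamilton's trace estimate, localizing everything to open sets of $\C^n$ where the transverse K\"ahler-Ricci flow restricts to an honest K\"ahler-Ricci flow, exactly in the spirit of Lemma \ref{Kotlemma}. So the statement reduces to the corresponding assertion for a (possibly non-complete, non-compact) solution $\omega_t$ of \eqref{KRF} on an open set of $\C^n$, which is classical; but let me sketch the computation since it drives the constant $C$.

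First I would fix a point and work in complex foliated coordinates, so that $\kappa$ and $\omega_t$ are genuine K\"ahler metrics on a coordinate chart in $\C^n$ and $\Delta_t$ is the complex Laplacian $g_t^{\bar j i}\partial_i\partial_{\bar j}$. The standard computation (Aubin--Yau, Cao) gives, for the function $u:=\log{\rm tr}_\kappa\omega_t = \log\bigl(\kappa^{\bar j i}(g_t)_{i\bar j}\bigr)$, an inequality of the schematic form
\b
(\partial_t-\Delta_t)\log{\rm tr}_\kappa\omega_t \;\leq\; \frac{1}{{\rm tr}_\kappa\omega_t}\Bigl(\,\text{(curvature of }\kappa)\cdot\text{terms}\,\Bigr) \;-\;\nu\,,
\e
where the $-\nu$ comes from tracing the $-\nu\omega_t$ term in \eqref{KRF} against $\kappa$ (it contributes $-\nu\,{\rm tr}_\kappa\omega_t$ in the numerator, i.e. $-\nu$ after dividing), and the $-{\rm Ric}(\omega_t)$ term produces, after the usual Schwarz-type inequality absorbing the bad third-order terms, a contribution controlled by $-R_{i\bar j k\bar l}^\kappa$ contracted against positive tensors. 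Here one uses that the bisectional curvature of $\kappa$ is bounded below by some $-K$, so the remaining term is bounded by $K\,{\rm tr}_\kappa\omega_t\cdot{\rm tr}_{\omega_t}\kappa / {\rm tr}_\kappa\omega_t = K\,{\rm tr}_{\omega_t}\kappa$. Collecting, $(\partial_t-\Delta_t)u \leq C\,{\rm tr}_{\omega_t}\kappa - \nu$ with $C$ depending only on the lower bound of the bisectional curvature of $\kappa$ and on $n$.

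The main subtlety — and the thing I would be careful about — is the passage from the foliated setting to the local one: I need that $\kappa$, having "bisectional curvature bounded below" as a transverse K\"ahler structure, gives a \emph{uniform} lower bound on the bisectional curvature of the local K\"ahler metrics obtained from it in a foliated atlas, and likewise that ${\rm tr}_\kappa\omega_t$ and ${\rm tr}_{\omega_t}\kappa$, computed locally, glue to well-defined basic functions on $M$ so that the differential inequality is genuinely a transverse (basic) inequality. Both hold because $\kappa$ and $\omega_t$ are basic and the transverse Levi-Civita connection is exactly the one induced by the local K\"ahler metrics on the transverse coordinates, so all the pointwise curvature quantities descend. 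Once this identification is in place, the inequality is purely local and is precisely the classical computation, so there is nothing further to do.

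I expect the only real obstacle to be bookkeeping: making sure the "l.o.t." third-order terms are absorbed correctly with the right sign (the Aubin--Yau trick), and that the $-\nu$ term is tracked through the normalization in \eqref{KRF} — but since the paper explicitly refers to \cite{SW} for the proofs of the surrounding lemmas, the cleanest writeup is simply to note that the transverse K\"ahler-Ricci flow restricts locally to the normalized K\"ahler-Ricci flow on open subsets of $\C^n$, that ${\rm tr}_\kappa\omega_t$ is basic, and that the desired inequality is therefore the local estimate of \cite{SW}, quoted verbatim.
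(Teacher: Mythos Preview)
Your computation sketch is correct, and your ultimate conclusion --- cite \cite{SW} --- is exactly what the paper does: this lemma sits in Section~\ref{kahler} (``Some known results in open K\"ahler Manifolds''), which is a list of preliminary facts about the ordinary (non-foliated) K\"ahler--Ricci flow, and the paper gives no proof at all, only the blanket reference ``for the proofs we still refer to \cite{SW}.''

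The one thing to correct is a misreading of the context. In this lemma $X$ is already an honest (open) K\"ahler manifold and $\omega_t$ is a solution of the genuine normalized K\"ahler--Ricci flow \eqref{KRF}; this is \emph{not} the transverse setting. So your entire discussion of ``passing from the foliated setting to the local one,'' checking that ${\rm tr}_\kappa\omega_t$ is basic, and worrying about uniformity of the bisectional-curvature lower bound across a foliated atlas is unnecessary here --- those concerns belong to the \emph{application} of the lemma in the proof of Theorem~\ref{timedomain}, not to the lemma itself. Once you drop that layer, what remains of your proposal is exactly the classical parabolic Schwarz estimate (Aubin--Yau--Cao), which is the content of the cited reference.
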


Let us consider now on  $(X,\tilde \omega)$ a solution  $\varphi$ to the parabolic Monge-Amp\`ere equation 
$$
\partial_t\varphi=\log\frac{(\tilde \omega+i\partial\bar\partial\varphi)^n}{\tilde \omega^n}-\varphi_t\,,\quad \tilde \omega+i\partial\bar\partial\varphi>0\,,\quad \varphi(0)=0
$$
defined in $X\times [0,\infty)$. 

\begin{lemma}\label{phiK}
Assume $\|\partial_t\varphi_t\|_{C^0}\leq Ce^{-t}$ for a uniform constant $C$. Then 
\begin{enumerate}
\item[1.] there exists a smooth map $\varphi_{\infty}$ on $X$ such that 
such that $\|\varphi_t-\varphi_{\infty}\|_{C^{0}}\leq C e^{-t}$;

\vspace{0.1cm}
\item[2.] $\frac{1}{C'}\tilde \omega^n\leq (\tilde \omega+i\partial\bar\partial\varphi)^n\leq C'\tilde \omega^n $ for a uniform constant $C'$. 
\end{enumerate}
\end{lemma}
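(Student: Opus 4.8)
The plan is to follow the parabolic Monge--Amp\`ere estimates in the spirit of \cite{SW}, adapting them to the transverse setting which, as noted, is locally a family of genuine K\"ahler--Ricci flows. First I would recall that the equation for $\varphi$ has the normalization constant $\nu=1$, so the associated evolving metric $\omega_t=\tilde\omega+i\partial\bar\partial\varphi_t$ solves the normalised flow \eqref{KRF} with $\nu=1$. The assumption $\|\partial_t\varphi_t\|_{C^0}\le Ce^{-t}$ will be the main lever: since $\partial_t\varphi_t=\log\frac{\omega_t^n}{\tilde\omega^n}-\varphi_t$, it immediately controls $\varphi_t$ up to an exponentially decaying error, and controls $\log\frac{\omega_t^n}{\tilde\omega^n}$ as well.

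For part 1, I would observe that $\frac{d}{dt}\varphi_t=\partial_t\varphi_t$ with $\|\partial_t\varphi_t\|_{C^0}\le Ce^{-t}$ integrable on $[0,\infty)$, so $\varphi_t$ is Cauchy in $C^0$ uniformly in $x$; set $\varphi_\infty=\lim_{t\to\infty}\varphi_t$, and
$$
\|\varphi_t-\varphi_\infty\|_{C^0}\le\int_t^\infty\|\partial_s\varphi_s\|_{C^0}\,ds\le C e^{-t}.
$$
The smoothness of $\varphi_\infty$ is not immediate from this alone; for that I would invoke the higher-order estimates, namely that once $\omega_t$ is uniformly equivalent to $\tilde\omega$ (which is part 2), theorem \ref{thkahler} gives uniform local $C^\infty$ bounds on $\omega_t$, hence on $i\partial\bar\partial\varphi_t$, and combined with the $C^0$ bound on $\varphi_t$ and interpolation one gets uniform $C^k_{loc}$ bounds on $\varphi_t$ for all $k$; passing to the limit (using the exponential $C^0$ convergence and Arzel\`a--Ascoli) yields $\varphi_\infty\in C^\infty$.

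For part 2, the content is the two-sided bound $\frac{1}{C'}\tilde\omega^n\le\omega_t^n\le C'\tilde\omega^n$. The upper bound is direct: $\log\frac{\omega_t^n}{\tilde\omega^n}=\partial_t\varphi_t+\varphi_t$, and both terms are bounded in $C^0$ (the first by hypothesis, the second by part 1 or directly by integrating), so $\omega_t^n\le C'\tilde\omega^n$. The lower bound is the same inequality read the other way, again using $\log\frac{\omega_t^n}{\tilde\omega^n}=\partial_t\varphi_t+\varphi_t\ge -C$. So in fact part 2 is essentially a restatement of the volume-ratio control that is already built into the hypothesis through the Monge--Amp\`ere equation; the only subtlety is making sure the $C^0$ bound on $\varphi_t$ is uniform in $t$, which part 1 provides.

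The step I expect to be the genuine obstacle is establishing $C^\infty$ regularity of $\varphi_\infty$ rather than merely continuity, because the clean exponential estimate only controls the $C^0$ norm; one must feed part 2 into theorem \ref{thkahler} to get uniform local higher-order bounds, then bootstrap. In writing this up I would be careful that all the K\"ahler-geometric estimates quoted (lemma \ref{estimate}, theorem \ref{thkahler}, lemma \ref{lemmas}) are being applied on the foliated chart where the flow is a bona fide K\"ahler--Ricci flow, and that the constants depend only on the geometry of $(M,\F,\tilde g_Q,J)$ so that the local estimates patch to a global statement on the compact $M$; this patching is exactly the mechanism already used elsewhere in the section, so I would reference it rather than redo it.
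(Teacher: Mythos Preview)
The paper does not give its own proof of this lemma: it is one of the results in subsection~6.1 that are merely quoted from \cite{SW}, so there is no proof in the paper to compare against. Your argument is the standard one and is essentially correct for parts~1 and~2 as stated: integrating the hypothesis gives the $C^0$ Cauchy property and the exponential rate, and the identity $\log\frac{\omega_t^n}{\tilde\omega^n}=\partial_t\varphi_t+\varphi_t$ immediately gives the two-sided volume bound once $\varphi_t$ is uniformly bounded.

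Two cautions. First, the lemma is stated in the paper for a genuine (open) K\"ahler manifold $(X,\tilde\omega)$, not in the foliated setting; your framing in terms of transverse charts and the compact $M$ mixes in the later application and is not needed here. Second, and more substantively, your route to smoothness of $\varphi_\infty$ has a gap: you write ``once $\omega_t$ is uniformly equivalent to $\tilde\omega$ (which is part~2)'', but part~2 only gives equivalence of the \emph{volume forms} $\omega_t^n$ and $\tilde\omega^n$, not of the metrics themselves, and theorem~\ref{thkahler} requires the latter. In the paper's actual use of the lemma (subsection~6.4), the trace bound ${\rm tr}_{\tilde\omega}\omega_t\le C$ is established separately via lemma~\ref{estimate} and the maximum principle, and only then are lemma~\ref{LAlemma} and theorem~\ref{thkahler} invoked to obtain metric equivalence and $C^\infty$ bounds. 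So smoothness of $\varphi_\infty$ is not part of what this lemma alone delivers from its hypothesis; treat it as the $C^0$ limit here and upgrade regularity downstream once the trace estimate is in hand.
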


\subsection{A maximum principle in foliated manifolds}
Here we prove a general maximum principle involving basic functions on compact manifolds foliated by Riemannian foliations. The result can be seen as an extension of \cite[Proposition 5.1]{zedda} to the foliated non-Sasakian case. 

By a {\em smooth family of linear basic partial differential operators} $\{E\}_{t\in [0,\epsilon)}$ we mean a smooth family of linear basic differential operators $E(\cdot,t)\colon C^{\infty}_B(M)\to C^{\infty}_B(M)$ whose coefficients depend smoothly on $t$. 

\begin{prop}[Maximum principle for basic maps]\label{maximum}
Let $(M,\F,g_Q,J)$ be a compact manifold with a K\"ahler foliation.  Let  $\{E\}_{t\in [0,\epsilon)}$ be a smooth family of linear basic partial differential operators such that $E(\cdot,t)$ is transversally strongly elliptic for every $t\in [0,\epsilon)$ and satisfies 
\b\label{E}
E(h(x,t),t)\leq 0
\e
whenever $h\in C^{\infty}_B(M\times[0,\epsilon))$ is such that $i\partial_B\bar \partial_Bh(x,t)\leq 0$.
Then if $h\in C^{\infty}_B(M\times [0,\epsilon),\mathbb{R})$ satisfies
$$
\partial_th(x,t)-E(h(x,t),t)\leq 0, 
$$
we have 
$$
\sup_{(x,t)\in M\times [0,\epsilon)} h(x,t)\leq \sup_{x\in M} h(x,0).
$$
\end{prop}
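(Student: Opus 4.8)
The plan is to run the standard parabolic maximum principle argument, but carried out at the level of the basic manifold $W$ so that the foliated geometry is reduced to a genuine PDE on a compact manifold. Concretely, I would first lift everything via Molino: by El Kacimi's correspondence recalled in Section~\ref{basicdiffoperfolaitedmanifolds}, the basic function $h(\cdot,t)\in C^\infty_B(M)$ corresponds to a $G$-invariant function $\bar h(\cdot,t)$ on $\bar W$ (equivalently an $\mathrm{SO}(n)$-invariant function on $M^\sharp$), and the transversally strongly elliptic operator $E(\cdot,t)$ corresponds, after the correction by the operator $Q$ of Theorem~\ref{ourEKA}, to a genuinely strongly elliptic $G$-invariant operator $\bar E(\cdot,t)$ on $C^\infty(\bar W)$, with coefficients depending smoothly on $t$. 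Since $Q$ annihilates $G$-invariant sections, $\bar E$ acts on $\bar h$ exactly as $E$ acts on $h$, so the differential inequality $\partial_t h - E(h,t)\le 0$ transfers verbatim to $\partial_t \bar h - \bar E(\bar h,t)\le 0$ on the compact manifold $\bar W$.

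Next I would apply the classical parabolic maximum principle on $\bar W$. Introduce $\bar h_\varepsilon(x,t)=\bar h(x,t)-\delta t$ for small $\delta>0$; at an interior maximum of $\bar h_\varepsilon$ on $\bar W\times[0,\tau]$ occurring at $(x_0,t_0)$ with $t_0>0$, one has $\partial_t\bar h_\varepsilon\ge 0$ and the spatial Hessian of $\bar h_\varepsilon$ is negative semidefinite, hence $\partial_t\bar h(x_0,t_0)\ge\delta>0$. To contradict this I need $\bar E(\bar h(\cdot,t_0),t_0)(x_0)\le 0$, which is where hypothesis \eqref{E} enters: the condition is phrased not in terms of the real Hessian but in terms of the complex Hessian $i\partial_B\bar\partial_B h\le 0$. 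So the key reduction step is to translate a complex-Hessian maximality statement on $W$ back to a statement about $h$ on $M$. At an interior maximum point of the basic function $h(\cdot,t_0)$ one has $i\partial_B\bar\partial_B h(\cdot,t_0)\le 0$ there (this is the basic analogue of the fact that a maximum of a function has negative semidefinite complex Hessian, using that the leaves of a K\"ahler foliation locally look like $\R^m\times\C^n$ in complex foliated coordinates and $h$ descends to a function of the transverse $\C^n$ coordinates), and then \eqref{E} gives $E(h(x_0,t_0),t_0)\le 0$, i.e.\ $\bar E(\bar h(\cdot,t_0),t_0)(x_0)\le 0$, the desired contradiction. Therefore $\bar h_\varepsilon$ attains its maximum at $t=0$, and letting $\delta\to 0$ yields $\sup_{\bar W\times[0,\epsilon)}\bar h\le\sup_{\bar W}\bar h(\cdot,0)$; transporting back through $\psi$ gives the claimed bound for $h$ on $M$.

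The main obstacle I anticipate is the compatibility of the two notions of Hessian at the maximum point: the maximum principle on $\bar W$ naturally produces information about the \emph{real} Hessian of $\bar h$, whereas hypothesis \eqref{E} is stated in terms of the \emph{complex} basic Hessian $i\partial_B\bar\partial_B h$. Care is needed because $\bar W$ itself need not carry a K\"ahler (or even almost complex) structure — the complex structure lives only transversally on $M$, i.e.\ on $Q^\sharp/V^b=H^b$. The clean way around this is to argue the complex-Hessian sign condition \emph{downstairs} on $M$ (or on a foliated chart $\R^m\times\C^n$) at the maximum of the basic function $h(\cdot,t_0)$, where the transverse K\"ahler structure is available and $h$ is genuinely a function of the $\C^n$-variables; the bound $i\partial_B\bar\partial_B h\le 0$ at a maximum is then the elementary fact that the complex Hessian of a real function is bounded above by its real Hessian. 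A secondary point to check is that the correction operator $Q$ in Theorem~\ref{ourEKA}, being a positive power of a Laplacian in the fiber directions, does not spoil strong ellipticity of $\bar E$ nor its smooth dependence on $t$, and that it genuinely vanishes on the $G$-invariant function $\bar h$; both follow directly from the construction in \cite{EKA} recalled in the proof of Theorem~\ref{ourEKA}.
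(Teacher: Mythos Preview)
Your proposal is correct, but it takes a considerably more roundabout route than the paper. The paper argues entirely on $M$: fix $\epsilon_0<\epsilon$, set $h_\lambda=h-\lambda t$, and use compactness of $M$ to locate a global maximum $(x_0,t_0)$ of $h_\lambda$ on $M\times[0,\epsilon_0]$. If $t_0>0$, then in complex foliated coordinates near $x_0$ the basic function $h_\lambda(\cdot,t_0)$ depends only on the transverse $\C^n$ variables, so $i\partial_B\bar\partial_B h_\lambda(x_0,t_0)\le 0$ is immediate; hypothesis \eqref{E} gives $E(h_\lambda(x_0,t_0),t_0)\le 0$, which combined with $\partial_t h_\lambda(x_0,t_0)\ge 0$ contradicts $\partial_t h-E(h,t)\le 0$ after subtracting $\lambda$. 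Hence $t_0=0$, and letting $\lambda\to 0$ finishes.

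Your detour through the Molino lift and El Kacimi's operator $\bar E$ on $\bar W$ is unnecessary here: the lift is essential for Theorem~\ref{existence} because one needs a genuine parabolic PDE on a compact manifold to invoke existence theory, but a maximum principle is a pointwise argument and compactness of $M$ itself already delivers the extremum. Indeed, you end up acknowledging this when you write that ``the clean way around this is to argue the complex-Hessian sign condition downstairs on $M$'' --- at that moment you are executing exactly the paper's proof, and the preceding passage through $\bar W$ has contributed nothing. The real Hessian information on $\bar W$ that you obtain is never used, since you cannot deduce $\bar E(\bar h)\le 0$ from it without an additional structural hypothesis on $\bar E$ that is not available; you must return to $M$ and invoke \eqref{E} anyway.
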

\begin{proof}
Fix $\epsilon_0\in (0,\epsilon)$ and let $h_\lambda\colon M\times [0,\epsilon_0]\to \R$ be 
$h_\lambda(x,t)=h(x,t)-\lambda t$. Assume that $h_\lambda$ achieves its global maximum at $(x_0,t_0)$ and assume by contradiction 
$t_0\neq 0$. Then 
$$
\partial_th_\lambda(x_0,t_0)\geq 0\,,\quad i\partial_B\bar\partial_B h_\lambda(x_0,t_0)\leq 0\,.
$$ 
Therefore condition \eqref{E} implies $E(h_\lambda(x_0,t_0),t_0)\leq 0$ and then
$$
\partial_th_\lambda(x_0,t_0)-E(h_\lambda(x_0,t_0),t_0)\geq 0. 
$$
Since $\partial_th_\lambda=\partial_th-\lambda$ and $E(h_\lambda(x,t),t)=E(h(x,t),t)$, we have 
$$
0\leq \partial_th(x_0,t_0)-E(h(x_0,t_0),t_0)-\lambda \leq -\lambda,
$$
which is a contradiction. Therefore $h_\lambda$ achieves its global maximum at a point $(x_0,0)$ and  
$$
\sup_{M\times [0,\epsilon_0]} h\leq \sup_{M\times [0,\epsilon_0]} h_\lambda+\lambda\epsilon_0\leq \sup_{x\in M} h(x,0)+\lambda\epsilon_0.
$$
Since the above inequality holds for every $\epsilon_0\in (0,\epsilon)$ and $\lambda>0$, the claim follows. 
\end{proof} 

\begin{cor}\label{maximumprinciple}
Let $(M,\mathcal F,g_Q(t),J)$ be a manifold with a family of K\"ahler foliations. 
Let $h\in C^{\infty}_B(M\times[0,T))$ which is basic for very $t$. Assume  
$$
(\partial_t-\Delta_{B,t})h_t\leq 0\,,
$$
where $\Delta_{B,t}$ is the basic Laplacian operator of $g_Q(t)$, 
then  
$$
\sup_{M\times [0,T)} h\leq \max_{M} h_0\,. 
$$
\end{cor}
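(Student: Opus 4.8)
The plan is to deduce Corollary \ref{maximumprinciple} directly from Proposition \ref{maximum} by checking that the family of operators $E(\cdot,t) := \Delta_{B,t}$, the basic Laplacian of the transverse K\"ahler metric $g_Q(t)$, satisfies all the hypotheses of that proposition. First I would note that since $(g_Q(t),J)$ is a smooth family of transverse K\"ahler structures, the basic Laplacian $\Delta_{B,t}$ acting on basic functions is a smooth family of linear basic partial differential operators whose coefficients (the inverse metric components $g_t^{\bar j i}$ in complex foliated coordinates) depend smoothly on $t$; this is the ``smooth family'' requirement. Moreover, each $\Delta_{B,t}$ is transversally strongly elliptic, as already recorded in the paper (it is the foremost example given right after Definition \ref{defD}).

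The key point to verify is the sign condition \eqref{E}: if $h\in C^{\infty}_B(M\times[0,\epsilon))$ satisfies $i\partial_B\bar\partial_B h(x,t)\leq 0$, then $\Delta_{B,t}h(x,t)\leq 0$. This is a pointwise linear-algebra fact on the transverse K\"ahler vector space $Q_x$: in complex foliated coordinates $\{x^i,z^k\}$ with $\{V_k=\pi(\partial_{z^k})\}$ a local $(1,0)$-frame, for a basic function $h$ one has, up to a positive normalization constant, $\Delta_{B,t}h = g_t^{\bar j i}\partial_i\partial_{\bar j}h = {\rm tr}_{\omega_t}(i\partial_B\bar\partial_B h)$, and the trace with respect to a positive $(1,1)$-form of a non-positive $(1,1)$-form is non-positive. (Here I am using that for a \emph{basic} function the basic $\partial_B\bar\partial_B$ agrees with the ordinary $\partial\bar\partial$ in the transverse complex coordinates, and that the basic Laplacian on functions is, up to sign conventions fixed in Section 2, the transverse complex Laplacian — exactly as in the non-foliated K\"ahler identity $\Delta = 2\,{\rm tr}_\omega(i\partial\bar\partial)$ on functions.) One should be mindful of the sign convention for $\Delta_B$ on basic functions adopted in the Preliminaries ($\Delta_B=-d_B\delta_B+\delta_B d_B$), which is precisely the convention making $\Delta_B$ on functions equal to the (non-negative-spectrum) transverse complex Laplacian $g^{\bar j i}\partial_i\partial_{\bar j}$, so that the inequality goes the right way.

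With \eqref{E} established, the hypothesis $(\partial_t-\Delta_{B,t})h_t\leq 0$ is exactly the hypothesis $\partial_t h(x,t)-E(h(x,t),t)\leq 0$ of Proposition \ref{maximum}, and the conclusion $\sup_{M\times[0,T)}h\leq \max_M h_0$ follows immediately (the maximum over compact $M$ of the continuous function $h_0$ is attained, so $\sup_{x\in M}h(x,0)=\max_M h_0$). One minor technical point: Proposition \ref{maximum} is stated for a \emph{fixed} K\"ahler foliation $(M,\F,g_Q,J)$ on the interval $[0,\epsilon)$, whereas here the metric $g_Q(t)$ varies with $t$; but the proof of Proposition \ref{maximum} only uses the metric through the ellipticity of $E(\cdot,t)$ and through condition \eqref{E}, both of which hold uniformly in $t$ here, so the argument carries over verbatim — alternatively one simply reruns the $h_\lambda=h-\lambda t$ argument of Proposition \ref{maximum} with the $t$-dependent operators in place.

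The main (and really the only) obstacle is the pointwise identification $\Delta_{B,t}h = c\cdot{\rm tr}_{\omega_t}(i\partial_B\bar\partial_B h)$ with the correct positive constant $c$ and the correct sign, i.e. making sure the basic Hodge-theoretic definition of $\Delta_B$ from Section 2 matches the transverse K\"ahler Laplacian $g^{\bar j i}\partial_i\partial_{\bar j}$ on basic functions; once the conventions are pinned down this is the standard K\"ahler identity applied leafwise/transversally, and everything else is a direct citation of Proposition \ref{maximum}.
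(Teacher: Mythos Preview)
Your proposal is correct and follows exactly the approach the paper intends: the corollary is stated without proof immediately after Proposition \ref{maximum}, and the implicit argument is precisely the verification you carry out, namely that $E(\cdot,t)=\Delta_{B,t}$ satisfies the smoothness, transverse strong ellipticity, and the sign condition \eqref{E} via the local identity $\Delta_{B,t}h=g_t^{\bar j i}\partial_i\partial_{\bar j}h={\rm tr}_{\omega_t}(i\partial_B\bar\partial_B h)$. Your remark about the background metric in Proposition \ref{maximum} being fixed while here $g_Q(t)$ varies is well taken but, as you note, irrelevant to the proof.
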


\subsection{Proof of theorem \ref{timedomain}}
In this subsection we prove theorem  $\ref{timedomain}$. 

Every transverse volume form $\Omega$ on a manifold $M$ foliated by a K\"ahler foliation can be written as  
$$
\Omega=(i)^nf\,dz^1\wedge d\bar z^1\wedge \cdots \wedge dz^n\wedge d\bar z^n,
$$
where the map $f$ depends only on the transverse complex coordinates. Then when we write $\log \Omega$, we mean $\log f$. 
\begin{proof}[Proof of theorem $\ref{timedomain}$]
First of all we show that \eqref{KRF} has a unique transversally K\"ahler maximal solution $g_{Q}(t)$ defined in $M\times [0,T_{\max})$, where $T_{\max} \leq T$.

Let $T'<T$ and consider a transversally K\"ahler form $\beta$ such that 
$$
[\beta]_B=[\tilde \omega]_B-2\pi T'\,c_B^1(M,J)\,.
$$ 
Then we define  
$$
\hat \omega_t=\frac{1}{T'}((T'-t)\tilde \omega+t\beta)
$$
for $t\in [0,T')$ and we consider the scalar flow 
\b\label{parabolicMA}
\partial_t\varphi_t=\log\frac{(\hat\omega_t+i \partial_B\bar \partial_B \varphi_t)^n}{\Omega}\,,\qquad  \hat\omega_t+i\partial_B\bar\partial_B\varphi_t>0\,,\qquad \varphi_{|t=0}=0
\e
where  $\varphi_t$ is smooth family of basic functions and $\Omega$ is a transverse volume form satisfying 
$$
i\partial_B\bar \partial_B\log\Omega=\frac{1}{T'}(\beta-\tilde \omega). 
$$
Since \eqref{parabolicMA} is transversally parabolic, theorem \ref{parabolic} implies that it 
has a unique maximal short time solution $\varphi\in C^{\infty}_B(M\times [0,T_{\max}))$. Moreover the curve of metrics corresponding to the path of fundamental forms 
$$
\omega_t=\hat{\omega}_t+i\partial_B\bar \partial_B\varphi_t
$$ 
solves \eqref{TKRF} and the transverse $\partial_B\bar\partial_B$-lemma for K\"ahler foliations implies that every solution to \eqref{TKRF} induces a solution to \eqref{parabolicMA}. This implies the existence of a maximal solution $g_{Q}$ defined in $M\times [0,T_{\max})$. Since 
$d/dt[\omega(t)]_B=-(2\pi)\,c_1^B(M)$, we necessarily have $T_{\max}\leq T$. 

\medskip 
Next we study the long time behavior of the maximal solution $\omega_t$. 
Assume by contradiction $T_{\max}<T$ and for a fixed $T'$ such that $T_{\max}<T'<T$ and define $\hat \omega_t$ as above. Note that with our assumptions $T_{\max}$  is necessarily finite. 
 Then we can write $\omega_t=\hat \omega_t+i\partial_B\bar\partial_B \varphi_t$, where $\varphi$ solves \eqref{parabolicMA}.  In order to apply theorem \ref{thkahler}, we have to show that there exists a uniform constant $C$ such that $\frac{1}{C}\tilde \omega\leq \omega_t\leq C\tilde \omega$. That is equivalent to require $\frac{1}{C}\leq {\rm tr}_{\tilde \omega}\, \omega_t\leq C$ and it can be proved by providing some a priori uniform estimates involving $\varphi$.  

\medskip
\noindent $\bullet$ {\em $\|\varphi_t\|_{C^0}$ is uniformly bounded in $[0,T_{\max})$.}
Keeping in mind that $\varphi$ is a solution to  \eqref{parabolicMA}, it is not difficult to show that 
$\partial_t(\varphi_t-At)$ is negative for a constant $A$ sufficiently large and the maximum principle implies that $\partial_t(\varphi_t-At)$ achieves its maximum at $t=0$. Therefore $\varphi_t\leq T_{\max}A$. A similar argument yields a lower bound for $\varphi$.  

\medskip
\noindent $\bullet$ {\em $\|\partial_t\varphi_t\|_{C^0}$ is uniformly bounded in $[0,T_{\max})$.}
This is equivalent to $\frac{1}{C_1}\tilde \omega^n\leq  \omega_t^n\leq C_1\tilde \omega^n$, for a uniform constant $C_1$. Keeping in mind that the basic scalar curvature $s_B(t)$ and the basic Laplacian operator $\Delta_{B,t}$ of $g_Q(t)$ are locally the scalar curvature and the Laplacian of the K\"ahler base manifold $X$, then 
lemma \ref{lemmas} implies 
$(\partial_t-\Delta_{B,t})(e^{\nu t}s_B(t))\geq 0$. Therefore corollary \ref{maximumprinciple} implies  
$s_B(t)\geq -\nu n-C_2e^{-\nu t}$ for  a uniform constant $C_2$ and the second part of lemma \ref{lemmas} together with the compactness of $M$ implies $\omega_t^{n}\leq C_1\tilde \omega^n$ for a constant $C_1$. For the lower bound we have 
$$
(\partial_t-\Delta_{B,t})((T'-t)\partial_t\varphi_t+\varphi_t+nt)={\rm tr}_{\omega_t}\beta\geq 0
$$
and the maximum principle implies 
$$
(T'-t)\partial_t\varphi_t+\varphi_t+nt\geq T'\min_M\partial_t\varphi_{|t=0} =0 \,. 
$$ 
Since $\varphi$ is bounded, we get a lower bound for $\partial_t\varphi$.
 
\medskip 
\noindent $\bullet$ {\em ${\rm tr}_{\tilde \omega}\omega_t$ is uniformly bounded  from above in $[0,T_{\max})$.} 
In view of lemma \ref{estimate} we have 
$$
(\partial_t-\Delta_{B,t})\log({\rm tr}_{\tilde \omega}\omega_t)\leq C_3{\rm tr}_{\omega_t}\tilde \omega
$$
for a uniform constant $C_3$.  Let $A$ be a fixed constant such that $A\hat \omega_t-(C_3+1)\tilde \omega$ is a transversally K\"ahler form for every $t\in [0,T_{\max}]$. Then 
$$
{\rm tr}_{\omega_t}(C_3\tilde \omega-A\hat\omega_t)\leq -{\rm tr}_{\omega}\tilde \omega,\,\,\mbox{ in } M\times [0,T_{\max})\,.
$$
Hence
$$
(\partial_t-\Delta_{B,t})(\log({\rm tr}_{\tilde \omega}\omega_t)-A\varphi_t)\leq C_3{\rm tr}_{\omega_t}\tilde \omega-A\partial_t\varphi_t+A\Delta_{B,t}\varphi_t=
{\rm tr}_{\omega}(C_3\tilde \omega-A\hat \omega_t)-A\partial_t\varphi_t+An
$$
implies 
\b
\label{pre2}
(\partial_t-\Delta_{B,t})(\log({\rm tr}_{\tilde \omega}\omega_t)-A\varphi_t)\leq -{\rm tr}_{\omega_t}\tilde \omega -A\partial_t\varphi_t+An\,.
\e
Let $\tau\in (0,T_{\max})$ be fixed and let $(x_0,t_0)$ be a point where $\log ({\rm tr}_{\tilde \omega}\omega)-A\varphi$ achieves the maximum in $M\times[0,\tau]$.  If $t_0=0$, then 
$$
\max_{M\times [0,\tau]}(\log({\rm tr}_{\tilde \omega}\omega)-A\varphi)\leq \log n\,.
$$
If $t_0>0$, then \eqref{pre2} implies 
$$
{\rm tr}_{\omega_{t_0}}\tilde \omega\leq An -A\partial_t\varphi_{|t=t_0}= An-A\log\frac{\omega_{t_0}^n}{\Omega}\quad \mbox{ at } x_0\,,
$$
i.e. 
$$
{\rm tr}_{\omega_{t_0}}\tilde \omega +A\log\frac{\omega_{t_0}^n}{\Omega}\leq An\,\,\mbox{ at } x_0\,,
$$
and lemma \ref{LAlemma} implies that ${\rm tr}_{\tilde \omega}\omega$ is uniformly bounded  in $(x_0,t_0)$.  Hence, since $\|\varphi\|_{C^0}$ is uniformly bounded, we have 
$$
\max_{M\times [0,\tau]}(\log({\rm tr}_{\tilde \omega}\omega)-A\varphi)\leq (\log{\rm tr}_{\tilde \omega}\omega_{t_0})(x_0)+A\|\varphi_{t_0}\|_{C^0}\leq C
$$
where $C$ does not depend on $\tau$. Thus $\log({\rm tr}_{\tilde \omega}\omega)$ is uniformly bounded from above in $[0,T_{\max} )$ and the claim follows. 

\medskip 
  The three facts proved above together with lemma \ref{LAlemma} imply that $\frac{1}{C}\leq {\rm tr}_{\tilde \omega} \omega_t\leq C$ for a uniform constant $C$ and theorem \ref{thkahler} together with the compactness of $M$ implies  that the $C^{\infty}$ norm of $\omega$ is uniformly bounded in $M\times [0,T_{\max})$. Therefore as $t \to T_{\max}$ the solution $g_Q(t)$ converges to a transversally K\"ahler metric $g_Q(T_{\max})$ and the flow can be extended after $T_{\max}$ contradicting the maximality of the solution. 

\medskip 
In particular when $c_B^1(M)=0$, the maximal solution $g_Q$ is defined in $M\times [0,\infty)$.  Now we focus on this last case.
The fundamental form $\omega_t$ of $g_Q(t)$ can be written in this case as $\omega=\tilde \omega+i\partial_B\bar\partial_B \psi_t$, where $\psi_t$ solves 
$$
\partial_t\psi=\log\frac{(\tilde \omega+i\partial_B\bar\partial_B\psi_t)^n}{\tilde \omega^n}\,,\quad \tilde \omega+i\partial_B\bar\partial_B\psi_t>0\,,\quad \psi_{|t=0}=0
$$

\medskip
\noindent $\bullet$ {\em $\|\partial_t\psi_t\|_{C^0}$ is uniformly bounded in $[0,\infty)$.} The function $\psi$ solves $(\partial_t-\Delta_{B,t})\partial_t \psi_t=0$ and the maximum principle for basic maps implies this claim. 

\medskip
\noindent $\bullet$ {\em $\max\,\psi_t-\min\,\psi_t$ is uniformly bounded in $[0,\infty)$.} From the El Kacimi's paper \cite{EKA} it follows that the solutions to the transverse Monge-Amp\`ere equation 
$$
(\tilde \omega+i\partial_B\bar \partial_B f)^n=e^F\tilde \omega^n\,,\quad \tilde \omega+i\partial_B\bar\partial_Bf>0
$$
 satisfies the a priori estimate $\max f-\min f<C$ where $C$ depends only on $F$ and $\tilde \omega$. Now for every fixed $t$,  $\psi_t$ solves  
 $$
 (\tilde \omega+i\partial_B\bar\partial_B\psi_t)^n=\left(e^{\partial_t\psi_t}\right)\tilde \omega^n
 $$
 and the previous bound on $\partial_t\psi_t$ implies this claim. 
 
\medskip 
\noindent  {\em $\bullet$ ${\rm tr}_{\tilde \omega}\omega_t$ is uniformly bounded from above in $[0,\infty)$.} Lemma \ref{estimate} together with the compactness of $M$ implies that  $(\partial_t-\Delta_{B,t})\log{\rm tr}_{\tilde \omega}\omega_t\leq C\,{\rm tr}_{\omega_t} \tilde \omega$  for a uniform constant $C$.  It follows 
$$
\begin{aligned}
(\partial_t-\Delta_{B,t})(\log{\rm tr}_{\tilde \omega}\omega_t-(C+1)\psi_t)&\,\leq C\,{\rm tr}_{\omega_t} \tilde \omega-(C+1)\partial_t\psi_t-(C+1){\rm tr}_{\omega_t} \tilde \omega+Cn+n\\
&\,\leq -{\rm tr}_{\omega_t} \tilde \omega-(C+1)\partial_t\psi_t+Cn+n.
\end{aligned}
$$
Since $\partial_t\psi_t$ is uniformly bounded we have 
\b\label{pre}
(\partial_t-\Delta_{B,t})(\log{\rm tr}_{\tilde \omega}\omega_t-(C+1)\psi_t)\leq -{\rm tr}_{\omega_t} \tilde \omega+C_2
\e
for a uniform constant $C_2$. Now let us fix $\tau>0$ and  let $(x_0,t_0)$ be a point in $M\times [0, \tau]$ where $\log{\rm tr}_{\tilde \omega}\omega-(C+1)\psi$ achieves the maximum. If $t_0>0$, then
inequality \eqref{pre} implies ${\rm tr}_{\omega_{t_0}} \tilde \omega\leq C_2$ at $x_0$ and from 
$$
({\rm tr}_{\tilde \omega}\omega)\,\tilde \omega^n\wedge \chi\leq \frac{1}{(n-1)!}({\rm tr}_{\omega}\tilde \omega)^{n-1}\,\omega^{n}\wedge \chi=
\frac{1}{(n-1)!}({\rm tr}_{\omega}\tilde \omega)^{n-1}e^{\partial_t\psi}\,\tilde \omega^{n}\wedge \chi
$$
and the bound on  $\partial_t\psi$ it follows 
$$
{\rm tr}_{\tilde \omega}\omega_{t_0}\leq C_3\,\quad \mbox{ at }x_0\,,
$$ 
where $C_3$ does not depend on $\tau$. Moreover, since $\log{\rm tr}_{\tilde \omega}\omega-(C+1)\psi$ achieves the maximum at $(x_0,t_0)$, then we have 
$$
\log {\rm tr}_{\tilde \omega}\omega\leq C_3+(C+1)\psi-(C+1)\,\psi_{t_0}(x_0)\,,\,\, \mbox{ in }M\times [0,\tau]
$$
and so 
$$
\log {\rm tr}_{\tilde \omega}\omega\leq C_3+(C+1)\psi-(C+1)\,\min_{M\times [0,\tau]}\psi\,,\,\, \mbox{ in }M\times [0,\tau]\,.
$$
Let $V=\int_M \tilde \omega^n\wedge \chi$ and 
$$
\tilde \psi=\psi-\frac{1}{V}\int_M \psi\,\tilde \omega^n\wedge \chi\,.
$$
Then 
$$
\log {\rm tr}_{\tilde \omega}\omega\leq C_3+(C+1)\tilde \psi+\frac{C+1}{V}\int_M \psi\,\tilde \omega^n\wedge \chi
-(C+1)\,\inf_{M\times [0,\tau]}\tilde\psi-\frac{C+1}{V}\,\inf_{[0,\tau]}\int_M \psi\,\tilde \omega^n\wedge \chi
$$
in $M\times [0,\tau]$. Since $\tilde \psi$ is bounded in view of the previous point,
we get 
$$
\log {\rm tr}_{\tilde \omega}\omega\leq C_4+\frac{C+1}{V}\int_M \psi\,\tilde \omega^n\wedge \chi
-\frac{C+1}{V}\,\inf_{[0,\tau]}\int_M \psi\,\tilde \omega^n\wedge \chi\,,\quad \mbox{in }M\times [0,\tau]\,.
$$
Now 
$$
\frac{d}{dt}\int_M\psi\tilde \omega^n\wedge\chi=\int_M\log\left(\frac{\omega^n}{\tilde \omega^n}\right) 
\tilde \omega^n\wedge\chi \leq  V\log\left(\frac{1}{V}\int_M\omega^n\wedge \chi\right)=0
$$
shows that
$\int_M\psi\,\tilde \omega^n\wedge\chi$ is decreasing in $t$ and thus for every $(x,\tau)\in M\times [0,\infty)$ we have 
$$
\log {\rm tr}_{\tilde \omega}\omega_{\tau}(x)\leq C_4+\frac{C+1}{V}\int_M \psi_{\tau}\,\tilde \omega^n\wedge \chi
-\frac{C+1}{V}\,\int_M \psi_\tau\,\tilde \omega^n\wedge \chi= C_4\,.
$$
On the other hand if $t_0=0$, we have 
$$
\log{\rm tr}_{\tilde \omega}\omega_{t_0}(x_0)\leq \log n+(C+1)\psi_{0}(x)
$$
and we can prove the item by working in the same way. 

\medskip 
 Now, since the $C^0$-norm of $\partial_t\varphi$ is uniformly bounded, taking into account that $M$ is compact, the previous item and lemma  
\ref{LAlemma} imply $\frac{1}{C}\tilde \omega\leq \omega\leq C\omega$ in $[0,\infty)$ for a uniform constant $C$; therefore \ref{thkahler} implies that the $C^{\infty}$
norm of $\omega$ is uniformly bounded in $[0,\infty)$. It follows that $\psi_t$ converges to a basic smooth map $\psi_{\infty}$ on $M$ such that 
$\omega_{\infty}=\tilde \omega+i\partial_B\bar\partial_B\psi_{\infty}>0$. 

It remains to prove that $\omega_{\infty}$ has vanishing transverse Ricci tensor. Let $\tilde \rho_B$ be the basic Ricci form of $\tilde \omega$ and let $h\in C^{\infty}_B(M,\R)$ be such that  $\tilde \rho_B=i\partial_B\bar\partial_B h$. A direct computation yields that if  
$$
f(t)=\int_M\log \frac{\omega^n_t\wedge\chi}{\tilde \omega^n\wedge\chi}\,\omega^n_t\wedge\chi-\int_Mh(\omega^n_t-\tilde \omega^n)\wedge \chi\,,
$$ 
then 
\b\label{derivative}
\dot{f}(t)=-(\partial_B\dot \psi_t,\bar\partial_B\dot \psi_t)_{\omega_t}\,,\quad \ddot{f}(t)\leq 0\,,
\e
where $(\cdot,\cdot)_{\omega_t}$ is the scalar product \eqref{scalar} computed with respect to $\omega_t$. Equations \eqref{derivative} implies that $\dot f(t)\rightarrow 0$ as $t\rightarrow \infty$. Since $\partial_t\psi_t=\log\frac{(\tilde \omega+i\partial_B\bar\partial_B\psi_t)^n}{\tilde \omega^n}$, we obtain that $\partial_B \log\frac{\omega_{\infty}^n}{\tilde \omega^n}$ is constant which implies $\rho_B(\omega_{\infty})=0$. 
\end{proof}

\subsection{Proof of theorem \ref{c1<0}}
We may assume $\nu=1$ without loss of generality. About the short time existence it is enough to observe that if $g_Q$ solves \eqref{TKRF}, then the fundamental form of $\frac{1}{e^t}\,g_Q(e^t-1)$ solves \eqref{mTKRF}. Therefore \eqref{mTKRF} has a unique solution $\omega$ defined in $[0,\infty)$. By using the transverse $\partial_B\bar\partial_B$-lemma, we can write $\omega_t=\tilde \omega+i\partial_B\bar\partial_B \varphi_t$, where $\varphi_t$ solves 
$$
\partial_t\varphi_t=\log\frac{(\tilde \omega+i\partial_B\bar\partial_B\varphi_t)^n}{\tilde \omega^n}-\varphi_t\,,\quad \tilde \omega+i\partial_B\bar\partial_B\varphi_t>0\,,\quad \varphi_{|t=0}=0
$$

Now we have 

\medskip 
\noindent $\bullet$ {\em $\|\partial_t\varphi_t\|_{C^0}\leq Ce^{-t}$ for a uniform constant $C$.} 
Since $\partial^2_t\varphi_t=\Delta_{B,t}(\partial_t\varphi_t)-\partial_t\dot\varphi_t$, then $\partial_t(e^{t}\varphi_t)=\Delta_{B,t}(e^t\varphi_t)$ and  
the transverse maximum principle implies the item.

\medskip
Using lemma  \ref{phiK} together with the compactness of $M$ we have that there exists a basic smooth map $\varphi_{\infty}$
such that $\|\varphi_t-\varphi_{\infty}\|_{C^{0}}\leq C' e^{-t}$ and  $\frac{1}{C}\tilde \omega^n\leq \omega_t\leq C\tilde \omega^n$ for uniform constants
$C'$ and $C$. Moreover we have

\medskip 
\noindent {\em $\bullet$ ${\rm tr}_{\tilde \omega}\omega_t$ is uniformly upper bounded.} Lemma \ref{estimate} and the compactness of $M$ imply that 
$$
(\partial_t-\Delta_{B,t})(\log {\rm tr}_{\tilde \omega}\omega_t-(C+1)\varphi_t)\leq - {\rm tr}_{\omega_t}\tilde \omega-1-(C+1)\partial_t\varphi_t+(C+1)n
$$
for a uniform constant $C$. Let $\tau>0$ be fixed and let $(x_0,t_0)$ be a point in $M\times [0,\tau]$ where $\log {\rm tr}_{\tilde \omega}\omega-(C+1)\varphi$ achieves the maximum. If $t_0=0$, then 
$$
\log {\rm tr}_{\tilde \omega}\omega-(C+1)\varphi\leq \log n\mbox{ in }M\times [0,\tau]
$$
and therefore 
$$
\log {\rm tr}_{\tilde \omega}\omega\leq \log n+(C+1)\|\varphi\|_{C^0} \mbox{ in }M\times [0,\tau]\,.
$$
On the other hand, if $t_0>0$, then
$$
{\rm tr}_{\omega_{t_0}}\tilde \omega \leq -1-(C+1)\partial_t\varphi_{|t_0}+(C+1)n\,\,\ \mbox{ at }x_0\,,
$$
and therefore ${\rm tr}_{\omega_{t=}}\tilde \omega(x_0)$ is uniformly bounded in $(x_0,t_0)$. Since 
$$
({\rm tr}_{\tilde \omega}\omega_{t_0})\,\tilde \omega^n\wedge \chi\leq \frac{1}{(n-1)!}({\rm tr}_{\omega_{t_0}}\tilde \omega)^{n-1}\,\omega_{t_0}^{n}\wedge \chi,
$$
${\rm tr}_{\tilde \omega}\omega$ is uniformly bounded in $(x_0,t_0)$ and since $\varphi$ is uniformly bounded we get the item. 
 
\medskip
\noindent The two items above imply that the maximal solution $\omega_t=\tilde \omega+i\partial_B\bar\partial_B\varphi_t$ to \eqref{mTKRF} satisfies $\frac{1}{C}\tilde \omega\leq \omega_t\leq C\omega$ for a uniform constant and theorem \ref{thkahler} ensures that the $C^{\infty}$ norm of $\omega_t$ and of $\varphi_t$ are  uniformly bounded. 
This implies that $\omega_t$ converges to a transverse K\"ahler-Einstein structure, as required.

\subsection{The case of Sasaki manifolds.}
In the case of Sasaki metrics, theorem \ref{timedomain} and theorem \ref{c1<0} provide an alternative proof of the main results of \cite{SWZ} on Sasaki-Ricci flow. 

We recall that a Sasaki structure on a $(2n+1)$-dimensional manifold is given by a 1-dimensional foliation generated by a vector field $\xi$ together with the following triple of tensors: a bundle-like metric $g$, a 1-form $\eta$ such that $\ker \eta = \xi^\perp$ and an endomorphism $\Phi$ of $TM$ such that $\Phi^2=-{\rm Id}+\eta\otimes \xi$.
We denote by $\mathcal D$ the kernel of $\eta$ and by $g^T$ the restriction of $g$ to $\mathcal D$.  Clearly the pair $(\mathcal D,g^T)$ is identified with $(Q,g_Q)$.
If $g_Q(t)$ is a solution of the flow
$$\partial_tg_Q(t)=-{\rm Ric}_Q(t)-\nu g_Q(t) \,,\quad g_Q(0)=\tilde g_Q$$
with $\nu=0,-1$, we can reconstruct the Sasaki structure at any time by setting $g(t):=g^T(t)+\xi\otimes \xi$ and taking $\eta$ as the $g(t)$-dual of $\xi$.

\end{document}